\numberwithin{equation}{section}
\newtheorem{theorem}{Theorem}[section]
\newtheorem{lemma}[theorem]{Lemma}
\newtheorem{proposition}[theorem]{Proposition}
\newtheorem{remark}[theorem]{Remark}
\newcommand{\one}{\mathds{1}}
\newcommand{\mf}[1]{{\mathfrak #1}}
\newcommand{\bb}[1]{{\mathbb #1}}
\newcommand{\<}{\langle}
\renewcommand{\>}{\rangle}
\renewcommand{\epsilon}{\varepsilon}
\def\centerarc[#1](#2)(#3:#4:#5){\draw[#1] ($(#2)+({#5*cos(#3)},{#5*sin(#3)})$) arc (#3:#4:#5);}
\newcommand{\Lcal}{\mathcal{L}}
\DeclareMathSymbol{\leqslant}{\mathalpha}{AMSa}{"36} 
\DeclareMathSymbol{\geqslant}{\mathalpha}{AMSa}{"3E} 
\DeclareMathSymbol{\eset}{\mathalpha}{AMSb}{"3F}     
\newcommand{\R}{\mathbb{R}}
\newcommand{\Z}{\mathbb{Z}}
\renewcommand{\epsilon}{\varepsilon}
\newcommand{\E}{\mathbb{E}}
\newenvironment{myenumerate}{
	\renewcommand{\theenumi}{\arabic{enumi}}
	\renewcommand{\labelenumi}{{\rm(\theenumi)}}
	\begin{list}{\labelenumi}
		{
			\setlength{\itemsep}{0.4em}
			\setlength{\topsep}{0.5em}
			\setlength\leftmargin{2.45em}
			\setlength\labelwidth{2.05em}
			\setlength{\labelsep}{0.4em}
			\usecounter{enumi}
		}
	}
	{\end{list}
}
\newcommand{\beq}{\begin{equation}}
\newcommand{\eeq}{\end{equation}}
\newcommand{\ba}{\begin{aligned}}
	\newcommand{\ea}{\end{aligned}}
\tikzstyle{tinydots}=[dash pattern=on \pgflinewidth off 2*\pgflinewidth]
\let\oldtocsection=\tocsection
\let\oldtocsubsection=\tocsubsection
\let\oldtocsubsubsection=\tocsubsubsection
\renewcommand{\tocsection}[2]{\hspace{0em}\oldtocsection{#1}{#2}}
\renewcommand{\tocsubsection}[2]{\hspace{1em}\oldtocsubsection{#1}{#2}}
\renewcommand{\tocsubsubsection}[2]{\hspace{2em}\oldtocsubsubsection{#1}{#2}}
\DeclareRobustCommand{\SkipTocEntry}[5]{}
\renewcommand{\P}{\mathbb{P}}
\title[Nonequilibrium fluctuations of the occupation time]{Nonequilibrium fluctuations for the occupation time of the SSEP in $d \geq 2$}
\author{Tiecheng Xu}
\address{Center for Mathematical Analysis, Geometry and Dynamical Systems, Instituto Superior T\'ecnico, Universidade de Lisboa, Av. Rovisco Pais, 1049-001 Lisboa, Portugal}
\email{xutcmath@gmail.com}
\author{Linjie Zhao}
\address{School of Mathematics and Statistics \& Hubei Key Laboratory of Engineering Modeling and Scientific Computing, Huazhong University of Science and Technology, Wuhan 430074, China.}
\email{linjie\_zhao@hust.edu.cn}
\begin{document}

\subjclass[2010]{60K35}

\begin{abstract}
We study the symmetric simple exclusion process in two or higher dimensions. We prove the invariance principles for the occupation time when the process starts from nonequilibrium measures. Our proof combines the martingale method and correlation estimates for the exclusion process.
	\end{abstract}

\keywords{Correlation estimates; exclusion process; nonequilibrium fluctuations; occupation time.}

\maketitle

\section{Introduction}

It has a long-standing history to study the occupation times of interacting particle systems. In the seminal paper \cite{kipnis1987fluctuations}, Kipnis investigated  equilibrium fluctuations for the occupation time of  the symmetric simple exclusion process (SSEP). More precisely, assume that the process starts from the Bernoulli product measure with constant density, which is reversible for the dynamics. Define the occupation time at the origin as 
\[\Gamma^n (t) := \beta_{d,n} \int_0^t \bar{\eta}_s (0) ds,\]
where $\{\eta_s, s \geq 0\}$ is the process accelerated by $n^2$ with $n$ being the scaling parameter, $\eta (0) \in \{0,1\}$ is the occupation number at the origin and $\bar{\eta} (0)$ is the centered occupation number, see Subsection \ref{sec: model} for rigorous definitions. Moreover,
\begin{equation}\label{beta d n}
\beta_{d,n} = \begin{cases}
		\sqrt{n}, \quad &\text{if} \quad d = 1,\\
		\frac{n}{\sqrt{\log n}}, \quad &\text{if} \quad d = 2,\\
		n, \quad &\text{if} \quad d \geq 3.\\
	\end{cases}
\end{equation}
Kipnis proved that for any $t > 0$, the occupation time $\Gamma^n (t)$ converges in distribution, as $n \rightarrow \infty$, to a normal distribution with explicit variance.  The CLT was extended to invariance principles by Sethuraman \cite{sethuraman2000central}.  The limit  turns out to be the fractional Brownian motion with Hurst parameter $3/4$ in $d = 1$ and to be the Brownian motion in $d \geq 2$. Kipnis and Varadhan in \cite{kipnis1986central} introduced the famous martingale method. Since then, a significant progress has been made to understand general additive functionals of particle systems, see Subsection \ref{subsec: literature} for a summary on the existing literature. 

We emphasize that the above literature focuses on the case when the initial measure is a stationary measure for the process. However, less is known when the initial measure is nonequilibrium.   
In one dimension, the nonequilibrium invariance principle for the occupation time of the SSEP was proved by the first author with Erhard and Franco in \cite{erhard2024nonequilibrium}.  The main motivation of this article is to extend the results in \cite{kipnis1987fluctuations,sethuraman2000central} to the nonequilibrium setting in higher dimensions. We prove that when the process starts from the Bernoulli product measure with a slowly varying profile, the  occupation time converges to a Gaussian process with covariance function explicitly given in $d \geq 2$. 
See the next subsection for  rigorous statements of our results.

\subsection{Main results}\label{sec: model}
The state space of the exclusion process is $\Omega = \{0,1\}^{\Z^d}$. For a configuration $\eta \in \Omega$ and a site $x \in \Z^d$, $\eta (x) \in \{0,1\}$ denotes the number of particles at site $x$.  The exclusion process is a continuous time Markov process with infinitesimal generator $\mathcal{L}$ acting on local functions $f: \Omega \rightarrow \R$ as 
\[\Lcal f (\eta) = \sum_{x \in \Z^d} \sum_{j=1}^d \Big(f(\eta^{x,x+e_j}) - f(\eta)\Big).\]
Above, we call $f$ a local function if its value depends on $\eta$ only through a finite number of coordinates. For $x,y \in \Z^d$, $\eta^{x,y}$ denotes the configuration obtained from $\eta$ by swapping the values of $\eta(x)$ and $\eta(y)$,
\[\eta^{x,y} (z) = \begin{cases}
	\eta(x), \quad &\text{if} \quad z = y,\\
	\eta(y), \quad &\text{if} \quad z = x,\\
	\eta(z), \quad &\text{otherwise}. 
\end{cases}\]

Let $\rho_0 : \R \rightarrow [0,1]$ be the initial density profile.  We assume that $\rho_0$ has a bounded fourth derivative. Let $n \in \mathbb{N} := \{1,2,\ldots\}$ be the scaling parameter. Define  $\nu^n_{\rho_0 (\cdot)}$ as the product Bernoulli measure on $\Omega$ with marginals given by
\[\nu^n_{\rho_0 (\cdot)} (\eta(x) = 1) = \rho_0 (x/n), \quad x \in \Z^d.\]
When $\rho_0 \equiv \rho \in [0,1]$ the constant profile, we simply write $\nu_\rho$. It is well known that $\nu_{\rho}$ is reversible for the symmetric simple exclusion process, see \cite{liggettips} for example.

We will speed up the process by $n^2$.  Denote by $\eta_t \equiv \eta^n_t$ the process with generator $\Lcal_n := n^2 \Lcal$. For any probability measure  $\mu$ on $\Omega$,  let $\P^n_\mu$ be the probability measure on the path space $D ([0,\infty); \Omega)$ induced by the process $\eta_t$ starting from the initial measure $\mu$, and let $\E_{\mu}^n$ be the corresponding expectation. 

We are interested in the  occupation time $\Gamma^n (t)$ at the origin, which is defined as
\[\Gamma^n (t) := \beta_{d,n} \int_0^t \bar{\eta}_s (0) ds,\]
where $\beta_{d,n}$ was defined in \eqref{beta d n} and for $x \in \Z^d$,
\[\bar{\eta}_s (x) = \eta_s (x) - \rho^n_s (x), \quad \rho^n_s (x) = \E^n_{\nu^n_{\rho_0 (\cdot)}} [\eta_s (x)], \quad x \in \Z^d.\]


  Let $\rho(t,\cdot)$ be the unique solution to the heat equation
  \begin{equation}\label{heat equation}
\begin{cases}
	\partial_t \rho (t,u) = \Delta \rho (t,u), &\quad t > 0, u \in \R^d,\\
	\rho(0,u) = \rho_0 (u), &\quad u \in \R^d.
\end{cases}
  \end{equation}

Throughout the article, we fix a time horizon $T > 0$. Below is the main result of this article.

\begin{theorem}\label{thm: fluctuation}
	As $n \rightarrow \infty$, the sequence of processes $\{\Gamma^n (t), 0 \leq t \leq T\}$ converges in distribution to some limit $\{\Gamma(t), 0 \leq t \leq T\}$ with respect to the measure $\P^n_{\nu^n_{\rho_0 (\cdot)}}$, in the space $C([0,T])$ endowed with the uniform topology, where 
	\[\Gamma (t) = \int_0^t \sigma_{d} (s) d B(s), \quad 0 \leq t \leq T.\]
	In this formula, $B(s)$ is the standard one dimensional Brownian motion and 
	\begin{equation}
		\sigma_{d}^2 (t) = \begin{cases}
			\chi(\rho(t,0)) /2 \pi, \quad &\text{if}\quad d=2;\\
			2 g_d (0)\chi(\rho(t,0)), \quad &\text{if}\quad d\geq 3,
		\end{cases} 
	\end{equation}
where $\chi (\rho) = \rho (1-\rho)$,  $g_d (0) = \int_0^t \mathfrak{q}_t (0,0) dt$ with $\mathfrak{q}_t$ being the transition probability of the continuous time random walk on $\Z^d$, which jumps to one of its neighbors at rate one.
\end{theorem}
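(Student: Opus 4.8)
The plan is to follow the classical martingale decomposition of Kipnis--Varadhan, adapted to the nonequilibrium setting, combined with the correlation estimates for the SSEP. First I would introduce, for each $n$, a function $u^n: \Omega \to \R$ (depending on $t$ through the inhomogeneous reference density $\rho^n_s$) obtained by solving a resolvent-type equation $\lambda_n u^n - \Lcal_n u^n = \beta_{d,n}\bar\eta(0)$ with $\lambda_n \to 0$ suitably, or, in the cleaner route, directly write the additive functional as a sum of a martingale plus a negligible boundary term. Concretely, one sets $\Gamma^n(t) = M^n(t) + R^n(t)$ where $M^n$ is an $L^2$-martingale and $R^n$ collects the time-boundary terms and the contribution of $\partial_s \rho^n_s$; the slowly varying profile and the fact that $\rho_0$ has a bounded fourth derivative should guarantee that the extra drift terms arising from the time dependence of $\rho^n_s(x)$ are lower order. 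The key point, as in \cite{kipnis1987fluctuations,sethuraman2000central}, is that in $d \geq 2$ the relevant variational problem has a finite limit, so the martingale carries all the fluctuations and $R^n \to 0$ in probability uniformly on $[0,T]$.

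Next I would compute the predictable quadratic variation $\langle M^n \rangle_t$ and identify its limit. This is where the nonequilibrium input enters: the quadratic variation will be an integral over $[0,t]$ of a local function evaluated along the trajectory, whose expectation under $\P^n_{\nu^n_{\rho_0(\cdot)}}$ is controlled by the hydrodynamic limit $\rho^n_s(x) \approx \rho(s, x/n)$ together with the decay of the two-point correlation function $\varphi^n_s(x,y) = \E^n_{\nu^n_{\rho_0(\cdot)}}[\bar\eta_s(x)\bar\eta_s(y)]$. One expects $\langle M^n\rangle_t \to \int_0^t \sigma_d^2(s)\,ds$, with $\sigma_d^2(s)$ built from $\chi(\rho(s,0))$ and the Green's function $g_d(0)$ (resp. the logarithmic correction $1/2\pi$ in $d=2$), exactly reproducing the Kipnis formula but with the equilibrium density $\rho$ replaced pointwise by $\rho(s,0)$. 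To pass from the discrete Green's function sums to $g_d(0)$ one uses standard local CLT estimates for the rate-one random walk; the $d=2$ case requires the careful $\log n$ bookkeeping already encoded in $\beta_{2,n}$.

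Having the quadratic variation converge to a deterministic limit, I would invoke the martingale central limit theorem (e.g. via the Rebolledo/Aldous criterion) to conclude that $M^n$ converges in $C([0,T])$ to the Gaussian martingale $\int_0^t \sigma_d(s)\,dB(s)$; tightness of $\Gamma^n$ follows from tightness of $M^n$ (control of the jumps is immediate since each jump of $M^n$ is $O(\beta_{d,n}/n^2) \to 0$) plus the $L^2$ smallness of $R^n$. The main obstacle, in my view, is not the soft martingale machinery but the quantitative control of the nonequilibrium correlations: one must show that $\varphi^n_s$ stays small enough (size $O(1/n^d)$ with the right space decay) uniformly in $s \leq T$ so that (i) the error terms from the time-dependence of $\rho^n_s$ are negligible and (ii) the off-diagonal contributions to $\langle M^n\rangle_t$ do not distort the limiting variance. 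These estimates are presumably the content of the ``correlation estimates'' referred to in the abstract, and the whole argument hinges on having them with uniform-in-time constants on the fixed horizon $[0,T]$; everything else is a fairly faithful transcription of the equilibrium proof with $\rho$ replaced by $\rho(s,0)$.
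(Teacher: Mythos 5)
Your outline coincides with the paper's strategy: the same resolvent/Dynkin decomposition $\Gamma^n=M_n+R_n$ (the paper takes $g_n(x)=\int_0^\infty e^{-t}q_t(0,x)\,dt$, i.e.\ the resolvent at the fixed parameter $\lambda=1$, rather than $\lambda_n\to 0$, which is harmless here), identification of $\langle M_n\rangle(t)\to\int_0^t\sigma_d^2(s)\,ds$ via the replacement $\rho^n_s(x)\approx\rho(s,x/n)$ and decay of nonequilibrium correlations, the martingale CLT with the $O(1/n)$ jump bound, and $R_n(t)\to 0$ in $L^2$. You also correctly locate the real work in the quantitative correlation bounds.

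There is, however, one genuine gap: your claim that ``tightness of $\Gamma^n$ follows from tightness of $M^n$ plus the $L^2$ smallness of $R^n$'' does not go through. What the decomposition gives cheaply is $\E[R_n(t)^2]\to 0$ for each \emph{fixed} $t$; this yields convergence of finite-dimensional distributions but says nothing about the modulus of continuity of $R_n$ (equivalently, of $\Gamma^n$) uniformly over $[0,T]$, which is what convergence in $C([0,T])$ with the uniform topology requires. In the equilibrium setting one closes this by a Feynman--Kac/resolvent variance bound for additive functionals, but that tool is unavailable out of equilibrium, and the paper explicitly circumvents it by proving a Kolmogorov--Chentsov estimate directly on the occupation time,
\begin{equation*}
\E^n_{\nu^n_{\rho_0(\cdot)}}\Big[\big(\Gamma^n(t)-\Gamma^n(s)\big)^4\Big]\;\leq\; C(t-s)^2,
\end{equation*}
which in turn forces one to control the \emph{four-time} correlation function $\E[\bar\eta_{s_1}(0)\bar\eta_{s_2}(0)\bar\eta_{s_3}(0)\bar\eta_{s_4}(0)]$ with the correct decay in the time gaps (and, in $d=2$, with careful logarithmic bookkeeping). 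This multi-time estimate is a substantial additional layer beyond the one-time and two-time correlation bounds needed for the quadratic variation, and your proposal has no mechanism to produce it; without it the argument only delivers convergence of finite-dimensional distributions, not the functional limit theorem stated.
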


\subsection{Related literature}\label{subsec: literature}  A more general problem is to study additive functionals of the exclusion process, which is defined  as $\int_0^t f(\eta_s) ds$, where $f: \Omega \rightarrow \R$ is a local function. Assume the exclusion process starts from the Bernoulli product measure with constant density $\rho \in (0,1)$. In \cite{kipnis1986central}, Kipnis and Varadhan showed that if the $H_{-1}$ norm of $f$ is finite (see \cite{komorowski2012fluctuations} for precise definitions of the $H_{-1}$ norm),  then the additive functional converges to the Brownian motion. In \cite{sethuraman1996central}, Sethuraman and Xu gave easily verified conditions under which the  $H_{-1}$ norm of $f$ is finite for reversible particle systems. The mean zero case was studied by Sethuraman in \cite{sethuraman2000central,sethuraman2006centralcor}.  In the asymmetric case, the behavior of the additive functionals depends on whether the density $\rho = 1/2$ or not.  When $\rho \neq 1/2$, invariance principles for the additive functionals were proved by Sepp{\"a}l{\"a}inen and Sethuraman \cite{seppalainen2003transience} in dimension one, and by Bernardin \cite{bernardin2004fluctuations} in dimension two. When $\rho = 1/2$, variance bounds for the occupation time at the origin were obtained by Bernardin \cite{bernardin2004fluctuations}, Sethuraman \cite{sethuraman2006superdiffusivity}, and Li and Mao \cite{li2008upper}. In \cite{gonccalves2013scaling}, Gon{\c c}alves and Jara proposed the local Boltzmann-Gibbs principle, which allows to prove invariance principles of additive functionals for particle systems in dimension one. Additive functionals of the exclusion process with long jumps were studied by Bernardin, Gon{\c c}alves and Sethuraman in  \cite{bernardin2016occupation}. See \cite{komorowski2012fluctuations} for an excellent review on this topic.

As we mentioned earlier, very few results concern nonequilibrium fluctuations of the additive functionals. In \cite{erhard2024nonequilibrium}, the first author with Erhard and Franco studied nonequilibrium fluctuations for the occupation time of the SSEP in dimension one by Fourier techniques and by calculating correlation estimates sharply, which allow them to relate the occupation time to the empirical measure of the process. In \cite{fontes2021additive}, the first author with Fontes investigated nonequilibrium fluctuations for the additive functionals of the weakly asymmetric simple exclusion in one dimension. Their proof is based on the sharp relative entropy bound by Jara and Menezes \cite{jara2018non}.  As far as we know, this paper is the first attempt to study nonequilibrium fluctuations of the occupation time in higher dimensions. Moreover, the covariance functions of the limiting Gaussian processes are explicitly given.

\subsection{Outline of the proof} Our proof is based on the martingale method introduced in \cite{kipnis1987fluctuations}, which is presented in  Section \ref{sec: proof}. Due to the self-duality of the SSEP, the resolvent equation for the occupation time can be solved explicitly. Then, we can decompose the occupation time as a martingale with a negligible term, see Subsection \ref{subsec: matingale decom}. However, since we are in the nonequilirbium setting, very little is known about the distribution of the process at time $t > 0$. Thus, it is not direct to obtain the convergence of the martingale term, which is proved in Subsection \ref{subsec: martingale} and needs the correlation estimates calculated in Section \ref{sec correlation}.   For the tightness of the negligible term, it also seems that we cannot use the Feynman-Kac technique in \cite{quastel2002central} directly due to the nonequilibrium setting. Instead, we prove the tightness of the occupation time directly in Section \ref{sec tightness} by using the correlation estimates from Section \ref{sec correlation}.

\subsection{Notation} Throughout this article, $C$ is a constant depending only on fixed parameters including $T$ and may change from line to line. We sometimes write $a\leq Cb$ simply as $a\lesssim b$.

\section{Proof of Theorem \ref{thm: fluctuation}}\label{sec: proof}

\subsection{Martingale decomposition}\label{subsec: matingale decom} Let $q_t = \mathfrak{q}_{tn^2}$ be the transition probability of the accelerated  random walk on $\Z^d$.  The dependence of $q_t$ on $n$ is omitted to make notation short. Then,
\begin{equation}\label{eq: pn}
	\partial_t q_t (0,x) = \Delta_n q_t (0,x), \quad q_t (0,x) = \delta_0 (x), \; x \in \Z^d,
\end{equation}
where $\delta_0$ is the Kronecker delta function on the origin and $\Delta_n$ is the discrete Laplacian, that is,  for $q: \Z^d \rightarrow \R$,
\[\Delta_n q (x) = n^2 \sum_{j=1}^d \big(q(x+e_j) + q(x-e_j) - 2q(x)\big).\]
By local central limit theorem(see Theorem 2.3.5 and Theorem 2.3.10 of \cite{Lawler2010} for instance), 
\begin{equation}\label{est 1}
	|q_t (0,x) - n^{-d} \bar{q}_t (0,\tfrac{x}{n})| \leq C \min \Big\{  \frac{1}{(tn^2)^{(d+2)/2}}, \frac{1}{(tn^2)^{d/2} |x|^2} \Big\}.
\end{equation}
where $\bar{q}_t$ is the Gaussian kernel,
\[\bar{q}_t (0,u) = \frac{1}{(4\pi t)^{d/2}} \exp \Big\{ - \frac{|u|^2}{4t} \Big\}, \quad |u|^2 := \sum_{j=1}^d u_j^2.\]
For any site $x \in \Z^d$ and any configuration $\eta \in \Omega$, define 
\begin{equation}
	g_n( x) = \int_0^\infty e^{-  t} q_t (0,x) dt, \quad G_n ( \eta) = \sum_{x \in \Z^d} g_n (x) \eta (x).
\end{equation}
Note that $G_n (\eta)$ is well defined since $\eta (x)$ is bounded in $x$ and $\sum_x g_n (x) = 1$. By \eqref{eq: pn} and integration by parts formula, 
\begin{equation}\label{eq: g_n property}
	(1 - \Delta_n) g_n (x) = \delta_0 (x), \quad x \in \Z^d.
\end{equation}

By Dynkin's martingale formula,
\[M_n (t):=  \beta_{d,n} G_n (\eta_t) - \beta_{d,n} G_n ( \eta_0) -   \beta_{d,n} \int_0^t  \mathcal{L}_n G_n ( \eta_s) ds \]
is a mean-zero martingale.
Since $\mathcal{L}_n \eta (x) = \Delta_n \eta (x)$, using the summation by parts formula and \eqref{eq: g_n property},
we have 
\[(1- \mathcal{L}_n) G_n ( \eta) = \eta (0).\]
Therefore, we have the following decomposition for the occupation time,
\begin{equation}\label{martingale decom}
	\Gamma^n (t) = M_n (t) + R_n (t),
\end{equation}
where
\begin{equation}
	R_n (t) := \beta_{d,n} G_n (\bar{\eta}_0) - \beta_{d,n} G_n (\bar{\eta}_t) +  \beta_{d,n} \int_0^t G_n (\bar{\eta}_s) ds.
\end{equation}

In the rest of this section, we deal with the two terms on the right hand side of \eqref{martingale decom} respectively.

\subsection{The martingale term}\label{subsec: martingale} In this subsection, we characterize the limit of the sequence of  martingales $\{M_n (t), 0 \leq t \leq T\}$. 

\begin{lemma}\label{lem mart converg}
	As $n \rightarrow \infty$, the sequence of martingales $\{M_n (t), 0 \leq t \leq T\}$ converges in distribution  to a martingale $\{M (t), 0 \leq t \leq T\}$, in the path space $D([0,T], \R)$ endowed with the Skorokhod topology, where 
	\[	d M (t) = \sigma_{d} (t) d B (t).\]
\end{lemma}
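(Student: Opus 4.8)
The plan is to prove Lemma~\ref{lem mart converg} by the functional martingale central limit theorem. By Dynkin's formula $\{M_n(t)\}_{0\le t\le T}$ is a square‑integrable martingale with predictable quadratic variation $\langle M_n\rangle_t=\beta_{d,n}^2\int_0^t\big(\Lcal_n(G_n^2)-2G_n\,\Lcal_n G_n\big)(\eta_s)\,ds$; since $G_n$ is linear in $\eta$ and $\Lcal_n$ rings each bond at rate $n^2$, the carr\'e du champ is explicit and, writing $\nabla_j g_n(x):=g_n(x+e_j)-g_n(x)$,
\[
	\langle M_n\rangle_t=\beta_{d,n}^2\,n^2\int_0^t\sum_{x\in\Z^d}\sum_{j=1}^d\big(\nabla_j g_n(x)\big)^2\big(\eta_s(x)-\eta_s(x+e_j)\big)^2\,ds .
\]
It then suffices to verify: (a) for each fixed $t$, $\langle M_n\rangle_t\to\int_0^t\sigma_d^2(s)\,ds$ in probability; and (b) the largest jump of $M_n$ on $[0,T]$ tends to $0$. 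Since $\langle M_n\rangle$ is nondecreasing and the limit is continuous, (a) upgrades to uniform convergence on $[0,T]$, and the functional martingale CLT gives that $M_n$ converges in distribution in $D([0,T],\R)$ to the continuous Gaussian martingale $M$ with $\langle M\rangle_t=\int_0^t\sigma_d^2(s)\,ds$, i.e.\ $dM(t)=\sigma_d(t)\,dB(t)$.

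Condition (b) is easy: when a bond $(x,x+e_j)$ rings, $M_n$ jumps by $\beta_{d,n}\,\nabla_j g_n(x)\,(\eta(x)-\eta(x+e_j))$, which is bounded by $\beta_{d,n}g_n(0)$ since $0\le g_n\le g_n(0)$; by the local central limit theorem (equivalently the Fourier representation of $g_n$), $\beta_{d,n}g_n(0)$ is of order $\sqrt{\log n}/n$ in $d=2$ and $1/n$ in $d\ge3$, so it vanishes deterministically.

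For (a) I would first handle $\E[\langle M_n\rangle_t]$. Using $(\eta_s(x)-\eta_s(x+e_j))^2=\eta_s(x)+\eta_s(x+e_j)-2\eta_s(x)\eta_s(x+e_j)$, the correlation estimates of Section~\ref{sec correlation} to replace $\E[\eta_s(x)\eta_s(x+e_j)]$ by $\rho^n_s(x)\rho^n_s(x+e_j)$ up to a negligible error, and the Lipschitz regularity on scale $n$ of $\rho^n_s$, one gets $\E[(\eta_s(x)-\eta_s(x+e_j))^2]=2\chi(\rho^n_s(x))+o(1)$. The weight $\beta_{d,n}^2 n^2(\nabla_j g_n(x))^2$ carries almost all of its total mass at sites with $x/n\to0$ — concentrated near the origin when $d\ge3$, log‑uniformly on $|x|\lesssim n$ when $d=2$ — so $\chi(\rho^n_s(x))$ may be replaced by $\chi(\rho(s,0))$; combined with the identity $n^2\sum_{x,j}(\nabla_j g_n(x))^2=g_n(0)-\sum_x g_n(x)^2$ (summation by parts against $(1-\Delta_n)g_n=\delta_0$) and the asymptotics of $\beta_{d,n}^2\big(g_n(0)-\sum_x g_n(x)^2\big)$, extracted from $\int_0^\infty e^{-u/n^2}\mathfrak{q}_u(0,0)\,du$, this yields $\E[\langle M_n\rangle_t]\to\int_0^t\sigma_d^2(s)\,ds$. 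It remains to prove concentration, $\var(\langle M_n\rangle_t)\to0$. With $\xi_s(x,j)=(\eta_s(x)-\eta_s(x+e_j))^2$, this variance is a space–time double sum of $\cov(\xi_s(x,j),\xi_{s'}(x',j'))$ against the weights $\beta_{d,n}^4 n^4(\nabla_j g_n(x))^2(\nabla_{j'}g_n(x'))^2$; expanding each covariance into two‑, three‑ and four‑point functions of the centered occupation variables and inserting the bounds of Section~\ref{sec correlation}, which control both the spatial decay of these correlations and the mixing of the accelerated process on times of order $n^{-2}$, one shows the double integral is $o(1)$. Hence $\langle M_n\rangle_t\to\int_0^t\sigma_d^2(s)\,ds$ in $L^1$, a fortiori in probability.

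The main obstacle is this concentration step: in equilibrium it would follow from classical correlation decay, but here one must bound genuinely nonequilibrium space–time four‑point functions uniformly in the relevant ranges — precisely what Section~\ref{sec correlation} supplies — and in $d=2$ the spatial weight is only logarithmically spread, so the temporal mixing of the process is indispensable to the self‑averaging. A secondary point is the uniform‑in‑$s$ (including near $s=0$) control of the replacement $\E[(\eta_s(x)-\eta_s(x+e_j))^2]\approx2\chi(\rho(s,0))$ and, in $d=2$, the precise logarithmic bookkeeping of the limiting constant.
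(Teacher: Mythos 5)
Your proposal is correct and follows essentially the same route as the paper: the explicit quadratic variation, its convergence in probability to $\int_0^t\sigma_d^2(s)\,ds$ via the nonequilibrium two-time correlation estimates of Section~\ref{sec correlation} (your mean-plus-variance argument reduces, after Cauchy--Schwarz over the spatial weights, to exactly the $\sup_x$ time-integrated four-point bounds the paper uses), vanishing jumps, and the functional martingale CLT. The only cosmetic differences are that the paper justifies the replacement of $\chi(\rho^n_s(x))$ by $\chi(\rho(s,0))$ through summation by parts against $(1-\Delta_n)g_n=\delta_0$ rather than your (equally valid) mass-concentration argument for the weights $\beta_{d,n}^2 n^2(\nabla_j g_n)^2$, and that it verifies tightness separately via Aldous' criterion before invoking Jacod--Shiryaev.
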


\begin{proof}   

We first prove the tightness of the martingale. 	 By Aldous' criterion, we only need to check the following two conditions:
\begin{itemize}
	\item[(1)]  for any $0 \leq t \leq T$,
	\[\lim_{M \rightarrow \infty} \limsup_{n \rightarrow \infty} \mathbb{P}^n_{\nu^n_{\rho_0 (\cdot)}} \big( \big| M_n (t)\big| > M \big) =0.\]
	\item[(2)] for any $\varepsilon > 0$, 
	\[\lim_{\gamma \rightarrow 0} \limsup_{n \rightarrow \infty} \sup_{\tau \in \mathcal{T}_T, \theta \leq \gamma} \mathbb{P}^n_{\nu^n_{\rho_0 (\cdot)}} \big( \big| M_n (\tau+\theta) - M_n (\tau)\big| > \varepsilon \big) =0,\]
	where $\mathcal{T}_T$ denotes the family of all  stopping times bounded by $T$.
\end{itemize}
Let $\< M_n\> (t)$ be the quadratic variation of the martingale $M_n (t)$.  	By direct calculations, 
\begin{align}
	\<M_n\> (t) &= \beta_{d,n}^2 \int_0^t ds \Big\{ \Lcal_n G_n (\eta_s)^2 - 2 G_n (\eta_s) \Lcal_n G_n (\eta_s) \Big\} \notag\\
	&=  \beta_{d,n}^2 \int_0^t ds \Big\{ n^2 \sum_{x \in \Z^d} \sum_{j=1}^d \big(g_n (x) - g_n (x+e_j)\big)^2 \big(\eta_s (x) - \eta_s (x+e_j)\big)^2  \Big\}.\label{mqv}
\end{align}
Multiplying $g_n (x)$ on both hands of \eqref{eq: g_n property}, summing over $x \in \Z^d$ and using the summation by parts formula, we have 
\[g_n (0) =  \sum_{x \in \Z^d} g_n (x)^2 + n^2 \sum_{x \in \Z^d} \sum_{j=1}^d \big(g_n (x) - g_n (x+e_j)\big)^2.\]
Since 
\[\sum_{x \in \Z^d} g_n (x)^2 = \int_0^\infty r e^{-r} q_r (0,0) dr, \]
by \eqref{est 1} and Tauberian's theorem \cite[Theorem XIII, 5.1]{feller1991introduction}, 
\[\sum_{x \in \Z^d} g_n (x)^2 \lesssim \begin{cases}
	n^{-1}  \quad &\text{if} \quad d =1,\\
	n^{-2} \quad &\text{if} \quad d =2,\\
	n^{-3} \quad &\text{if} \quad d =3,\\
	n^{-4} \log n \quad &\text{if} \quad d =4,\\
	n^{-4} \quad &\text{if} \quad d \geq 5.\\
\end{cases}\]
Thus, in dimensions $d \geq 2$,
\begin{equation}\label{gn square vanish}
	\lim_{n \rightarrow \infty} \beta_{d,n}^2 \sum_{x \in \Z^d} g_n (x)^2 = 0.
\end{equation}
This implies that
\begin{align}
	\lim_{n \rightarrow \infty} &n^2 \beta_{d,n}^2 \sum_{x \in \Z^d} \sum_{j=1}^d \big(g_n (x) - g_n (x+e_j)\big)^2 \notag\\
	&= \lim_{n \rightarrow \infty}  \beta_{d,n}^2 g_n ( 0)
	= \begin{cases}
		\frac{1}{4 \pi} \quad &\text{if}\quad d = 2,\\
		g_d (0) :=\int_0^\infty \mathfrak{q}_t (0,0) dt \quad &\text{if} \quad d \geq 3.
	\end{cases}\label{eq: g_n property 1}
\end{align}
where we used Tauberian's theorem in the last identity.  Since there is at most one particle at each site, in dimensions $d \geq 2$, 
\begin{equation}\label{eq: QV bound}
	\big| \<M_n\> (t) \big| \leq C t.
\end{equation}
Condition (1) follows immediately from the above estimate and Markov's inequality. To verify condition (2), using Markov's inequality and \eqref{eq: QV bound} again, we bound
\begin{align*}
	\mathbb{P}^n_{\nu^n_{\rho_0 (\cdot)}} &\big( \big| M_n (\tau+\theta) - M_n (\tau)\big| > \varepsilon \big) \\
	&\leq 	\varepsilon^{-2} \mathbb{E}^n_{\nu^n_{\rho_0 (\cdot)} }\Big[ \big(M_n (\tau+\theta) - M_n (\tau)\big)^2\Big] \\
		&= \varepsilon^{-2} \mathbb{E}^n_{\nu^n_{\rho_0 (\cdot)}} \Big[ \<M_n\> (\tau+\theta) - \<M_n\>(\tau)\Big] \leq C \theta.
\end{align*}
This proves the tightness of the martingale. 

Let $\{M(t)\}$ be the limit of $\{M_n (t)\}$ along some subsequence. Without loss of generality, let us still denote this subsequence by $\{n\}$. Next, we show the limit $\{M(t)\}$ has continuous trajectories. Indeed, since there is at most one particle can jump at each time, by  \eqref{eq: g_n property 1}, 
\begin{align*}
	\sup_{0 \leq t \leq T}  |M_n (t) - M_n (t-)| &= \sup_{0 \leq t \leq T} \beta_{d,n} |G_n (\eta_t) - G_n (\eta_{t-})| \\
	&\leq \sup_{x \in \Z^d, 1 \leq j \leq d} \beta_{d,n} |g_n ( x) - g_n (x+e_j)| \leq \frac{C}{n},
\end{align*}
which converges to zero as $n \rightarrow \infty$. 
	
Finally, to conclude the proof, by \cite[Theorem VIII, 3.11]{jacod2013limit}, it suffices to show that for any $t$,
\begin{equation}\label{qv congergence}
\lim_{n \rightarrow \infty} \<M_n\> (t) = \int_0^t \sigma_{d}^2 (s) ds
\end{equation}
in probability, which follows from the following two equations:
	\begin{multline}\label{eqn 1}
	\lim_{n \rightarrow \infty}  \E^n_{\nu^n_{\rho_0 (\cdot)}} \Big[ \Big| \<M_n\> (t) 
	-  \int_0^t  \Big\{ n^2 \beta_{d,n}^2 \sum_{x \in \Z^d} \sum_{j=1}^d \big(g_n (x) - g_n (x+e_j)\big)^2 \\
	\times \big( \rho^n_s (x) + \rho^n_s (x+e_j) - 2 \rho^n_s (x) \rho^n_s (x+e_j)\big) \Big\} ds \Big|\Big] = 0,
\end{multline}
\begin{multline}\label{convergence 1}
\lim_{n \rightarrow \infty} \int_0^t  \Big\{ n^2 \beta_{d,n}^2 \sum_{x \in \Z^d} \sum_{j=1}^d \big(g_n (x) - g_n (x+e_j)\big)^2 \\
\times
\big( \rho^n_s (x) + \rho^n_s (x+e_j) - 2 \rho^n_s (x) \rho^n_s (x+e_j)\big) \Big\} ds 
= \int_0^t \sigma_{d}^2 (s) ds.
\end{multline}

%

We first prove \eqref{eqn 1}.  By \eqref{eq: g_n property 1}, it suffices to show that, for any $1 \leq j \leq d$,
\[\lim_{n \rightarrow \infty} \sup_{x \in \Z^d} \E^n_{\nu^n_{\rho_0 (\cdot)}} \Big[ \Big|\int_0^t \bar{\eta}_s (x) ds \Big|\Big] = 0, \quad \lim_{n \rightarrow \infty} \sup_{x \in \Z^d} \E^n_{\nu^n_{\rho_0 (\cdot)}} \Big[ \Big|\int_0^t \bar{\eta}_s (x) \bar{\eta}_s (x+e_j) ds \Big|\Big] = 0.\]
By Cauchy-Schwarz inequality,
\begin{align*}
	\E^n_{\nu^n_{\rho_0 (\cdot)}} \Big[ \Big|\int_0^t \bar{\eta}_s (x) ds \Big|\Big]^2 \leq \E^n_{\nu^n_{\rho_0 (\cdot)}} \Big[ \Big|\int_0^t \bar{\eta}_s (x) ds \Big|^2 \Big] = \int_0^t \int_0^t  \E^n_{\nu^n_{\rho_0 (\cdot)}}  [\bar{\eta}_s (x) \bar{\eta}_r (x)] \,dr\,ds,
\end{align*}
and 
\begin{align*}
	\E^n_{\nu^n_{\rho_0 (\cdot)}}& \Big[ \Big|\int_0^t \bar{\eta}_s (x) \bar{\eta}_s (x+e_j)ds \Big|\Big]^2 \\
	&\leq \E^n_{\nu^n_{\rho_0 (\cdot)}} \Big[ \Big|\int_0^t \bar{\eta}_s (x) \bar{\eta}_s (x+e_j)ds \Big|^2 \Big] \\
	&= \int_0^t \int_0^t  \E^n_{\nu^n_{\rho_0 (\cdot)}}  [\bar{\eta}_s (x) \bar{\eta}_s (x+e_j)\bar{\eta}_r (x) \bar{\eta}_r (x+e_j)] \,dr\,ds.
\end{align*}
We then conclude the proof by using Lemma \ref{twotimes} and Lemma \ref{twotimestwopt} below. 

It remains to prove \eqref{convergence 1}. For any function $g: \Z^d \rightarrow \R$ and any $1 \leq j \leq d$, let us denote
\[\nabla_{n,j}^+ g (x) = n (g(x+e_j) - g(x)), \quad \nabla_{n,j}^- g (x) = n (g(x) - g(x-e_j)).\]
Since $\rho_0$ has a bounded fourth derivative, using Duhamel's representation and Taylor's expansion (see also Theorem A.1 of \cite{jara2006nonequilibrium}),
	\begin{equation}\label{eqn 2}
		\sup_{x \in \Z^d, 0 \leq s \leq T} \big|\rho^n_s (x) - \rho \big(s,\tfrac{x}{n}\big)\big| \lesssim n^{-2}.
	\end{equation}
Recall that $\rho(s,\cdot)$ is the solution to the heat equation \eqref{heat equation}. Together with \eqref{eq: g_n property 1}, we can replace $\big( \rho^n_s (x) + \rho^n_s (x+e_j) - 2 \rho^n_s (x) \rho^n_s (x+e_j)\big) $ by $2\chi (\rho(s,x/n))$ in \eqref{convergence 1}, and only need to deal with
\[2 \int_0^t \beta_{d,n}^2 \sum_{x \in \Z^d} \sum_{j=1}^d \big[\nabla_{n,j}^+ g_n (x)\big]^2 \chi (\rho(s,x/n)) ds.\]
Using the summation by parts formula, the last expression equals 
\begin{align*}
-2 \int_0^t \beta_{d,n}^2 \sum_{x \in \Z^d} \sum_{j=1}^d&  g_n (x) \nabla_{n,j}^- \big\{ \chi (\rho(s,x/n)) \nabla_{n,j}^+ g_n (x)\big\}ds\\
= -2 \int_0^t &\beta_{d,n}^2 \sum_{x \in \Z^d}   g_n (x) \chi(\rho(s,x/n)) \Delta_n g_n (x) ds  \\
&-2 \int_0^t \beta_{d,n}^2 \sum_{x \in \Z^d} \sum_{j=1}^d  g_n (x) \nabla_{n,j}^- \chi(\rho(s,x/n)) \nabla_{n,j}^+ g_n (x-e_j) ds.
\end{align*}
By \eqref{eq: g_n property}, the first term on the right hand side of the last equation equals 
\[2 \beta_{d,n}^2 g_n (0) \int_0^t \chi(\rho(s,0)) ds - 2 \int_0^t \beta_{d,n}^2 \sum_{x \in \Z^d} g_n (x)^2 \chi (\rho(s,x/n)) ds,\]
which converges to $\int_0^t \sigma_{d}^2 (s) ds$ by \eqref{gn square vanish},  \eqref{eq: g_n property 1} and the boundedness of $\chi(\rho(s,x/n))$. By Cauchy-Schwarz inequality, the second term on the right hand side is bounded by
\[2 \int_0^t \beta_{d,n}^2 \sqrt{\sum_{x \in \Z^d} \sum_{j=1}^d  g_n (x)^2 \big(\nabla_{n,j}^- \chi(\rho(s,x/n)) \big)^2} \sqrt{\sum_{x \in \Z^d} \sum_{j=1}^d \big(\nabla_{n,j}^+ g_n (x-e_j)\big)^2} ds,\]
which converges to zero by using \eqref{gn square vanish},  \eqref{eq: g_n property 1} again and the boundedness of $\nabla_{n,j}^- \chi(\rho(s,x/n))$.
This concludes the proof.
\end{proof}

\subsection{The term $R_n (t)$}\label{subsec: rnt}  In this subsection, we show that the term $R_n (t)$ vanishes in the limit for any $t$.

\begin{lemma}\label{lem: Rn}  For  any $t \geq 0$,
	\begin{equation}
		\lim_{n \rightarrow \infty} \E^n_{\nu^n_{\rho_0 (\cdot)}} \Big[ \big( R_n (t)\big)^2 \Big] = 0;
	\end{equation}
\end{lemma}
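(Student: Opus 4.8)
\noindent\textit{Proof idea.}
The plan is to bound the second moment of each of the three summands in the definition of $R_n(t)$ separately. Writing $R_n(t)=\beta_{d,n}\big(G_n(\bar\eta_0)-G_n(\bar\eta_t)+\int_0^t G_n(\bar\eta_s)\,ds\big)$ and using $(a-b+c)^2\le 3(a^2+b^2+c^2)$, it suffices to prove that $\beta_{d,n}^2\,\E^n_{\nu^n_{\rho_0(\cdot)}}[G_n(\bar\eta_0)^2]$, $\beta_{d,n}^2\,\E^n_{\nu^n_{\rho_0(\cdot)}}[G_n(\bar\eta_t)^2]$ and $\beta_{d,n}^2\,\E^n_{\nu^n_{\rho_0(\cdot)}}\big[\big(\int_0^t G_n(\bar\eta_s)\,ds\big)^2\big]$ all tend to $0$. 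The first of these is immediate: since $\nu^n_{\rho_0(\cdot)}$ is a product measure, $\E^n_{\nu^n_{\rho_0(\cdot)}}[G_n(\bar\eta_0)^2]=\sum_x g_n(x)^2\chi(\rho_0(x/n))\le\tfrac14\sum_x g_n(x)^2$, which vanishes after multiplication by $\beta_{d,n}^2$ by \eqref{gn square vanish}.

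For the term at time $t$ I would expand $\E^n_{\nu^n_{\rho_0(\cdot)}}[G_n(\bar\eta_t)^2]=\sum_{x,y}g_n(x)g_n(y)\,\E^n_{\nu^n_{\rho_0(\cdot)}}[\bar\eta_t(x)\bar\eta_t(y)]$ and split the double sum into its diagonal and off-diagonal parts. The diagonal part equals $\sum_x g_n(x)^2\chi(\rho^n_t(x))\le\tfrac14\sum_x g_n(x)^2$, again controlled by \eqref{gn square vanish}. The off-diagonal part is the one requiring the estimates of Section~\ref{sec correlation}: combining the two-point correlation bounds proved there with the local central limit theorem estimates \eqref{est 1} for $g_n$, one shows that $\beta_{d,n}^2\sum_{x\ne y}g_n(x)g_n(y)\,\big|\E^n_{\nu^n_{\rho_0(\cdot)}}[\bar\eta_t(x)\bar\eta_t(y)]\big|\to 0$.

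For the time-integral term I would write $\E^n_{\nu^n_{\rho_0(\cdot)}}\big[\big(\int_0^t G_n(\bar\eta_s)\,ds\big)^2\big]=\int_0^t\!\int_0^t\sum_{x,y}g_n(x)g_n(y)\,\E^n_{\nu^n_{\rho_0(\cdot)}}[\bar\eta_s(x)\bar\eta_r(y)]\,ds\,dr$ and again split the spatial sum into diagonal and off-diagonal parts. The diagonal part equals $\sum_x g_n(x)^2\,\E^n_{\nu^n_{\rho_0(\cdot)}}\big[\big(\int_0^t\bar\eta_s(x)\,ds\big)^2\big]\le t^2\sum_x g_n(x)^2$, using $|\bar\eta_s(x)|\le 1$, so after multiplication by $\beta_{d,n}^2$ it vanishes by \eqref{gn square vanish}. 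For the off-diagonal part I would use, for $r\le s$, the Markov property together with the self-duality of the SSEP — which yield $\E^n_{\nu^n_{\rho_0(\cdot)}}[\bar\eta_s(x)\bar\eta_r(y)]=\sum_z q_{s-r}(x,z)\,\E^n_{\nu^n_{\rho_0(\cdot)}}[\bar\eta_r(z)\bar\eta_r(y)]$ — to reduce to the single-time correlations $\E^n_{\nu^n_{\rho_0(\cdot)}}[\bar\eta_r(z)\bar\eta_r(y)]$. After integrating the transition kernel in time (one checks from the bounds on $g_n$ and $q_v$ that $\int_0^t\big(\sum_w g_n(w)q_v(w,z)\big)\,dv\lesssim g_n(z)$) and separating the diagonal $z=y$ of the reduced sum — which, by $g_n(z)g_n(y)\le\tfrac12\big(g_n(z)^2+g_n(y)^2\big)$ and $\sum_w q_v(z,w)=1$, is absorbed into $\tfrac14\sum_x g_n(x)^2$ — the remaining off-diagonal term is controlled by the estimates of Section~\ref{sec correlation} exactly as in the previous paragraph.

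I expect the main obstacle to be the off-diagonal correlation sum $\beta_{d,n}^2\sum_{x\ne y}g_n(x)g_n(y)\,\big|\E^n_{\nu^n_{\rho_0(\cdot)}}[\bar\eta_s(x)\bar\eta_s(y)]\big|\to 0$. This is delicate particularly in $d=2$, where the normalization $\beta_{2,n}^2=n^2/\log n$ leaves only a logarithmic margin: the pointwise two-point correlation is only of order $n^{-2}$ (with a further logarithmic loss near the diagonal), which on its own would not be enough, so one has to exploit that the correlations summed against the weights $g_n(x)g_n(y)$ — which themselves decay away from the origin on scale $n$ by \eqref{est 1} — are genuinely of smaller order than $\beta_{d,n}^{-2}$. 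Delivering this is precisely the role of the correlation estimates in Section~\ref{sec correlation}; every remaining step reduces to \eqref{gn square vanish}, the bound $|\bar\eta|\le 1$, the Markov property and self-duality, and the Cauchy--Schwarz inequality.
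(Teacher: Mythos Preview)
Your approach is more involved than necessary and, as stated, has a genuine gap. You propose to control the off-diagonal contribution
\[
\beta_{d,n}^2\sum_{x\ne y}g_n(x)g_n(y)\,\big|\E^n_{\nu^n_{\rho_0(\cdot)}}[\bar\eta_t(x)\bar\eta_t(y)]\big|
\]
using the bounds of Section~\ref{sec correlation}. But those bounds are \emph{uniform} in the points: for $d\ge 3$, Lemma~\ref{onetime} gives $|\phi(t,t;x,y)|\le Cn^{-2}$, and since $\sum_x g_n(x)=1$ this yields only $\beta_{d,n}^2\cdot Cn^{-2}=C$, which does not tend to zero. In $d=2$ the same obstruction occurs: $\beta_{2,n}^2=n^2/\log n$ while Lemma~\ref{onetime2d} gives $|\phi(t,t;x,y)|\lesssim n^{-2}\log n$, so the product is again $O(1)$. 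The decay of $g_n$ does not buy anything extra here, because it is already fully consumed in $\sum_x g_n(x)=1$; Section~\ref{sec correlation} supplies no position-dependent two-point estimate of the kind you would need to beat this.

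The paper sidesteps this entirely by using the \emph{negative correlations} of the SSEP: when the process starts from a product measure, $\E^n_{\nu^n_{\rho_0(\cdot)}}[\bar\eta_t(x)\bar\eta_t(y)]\le 0$ for $x\ne y$. Since $g_n\ge 0$, the off-diagonal part of the double sum is nonpositive and can simply be dropped, giving $\E^n_{\nu^n_{\rho_0(\cdot)}}[G_n(\bar\eta_t)^2]\le\sum_x g_n(x)^2$. Then \eqref{gn square vanish} handles both the $t=0$ and the $t>0$ terms, and the time-integral term follows from Cauchy--Schwarz in $s$ together with the same bound --- no recourse to Section~\ref{sec correlation} is needed at all.
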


\begin{proof} We deal with the three terms in the definition of $R_n (t)$ respectively. By negative correlations of the symmetric exclusion process and since there is at most one particle at each site,
	\begin{equation}\label{Gn square bound}
		\E^n_{\nu^n_{\rho_0 (\cdot)}} \Big[ \big(G_n (\bar{\eta}_t)\big)^2 \Big] \leq \sum_{x \in \Z^d} g_n (x)^2 \E^n_{\nu^n_{\rho_0 (\cdot)}}  \big[ \bar{\eta_t} (x)^2\big] \leq \sum_{x \in \Z^d} g_n (x)^2.
	\end{equation}
	By \eqref{gn square vanish},  in dimensions $d \geq 2$, 
	\[ \lim_{n \rightarrow \infty} \E^n_{\nu^n_{\rho_0 (\cdot)}} \Big[ \big(\beta_{d,n} G_n (\bar{\eta}_t)\big)^2 \Big] =0.\]
	The term $G_n (\bar{\eta}_0)$ is easier since the initial measure is a product measure, and the estimate for the term $\int_0^t G_n (\bar{\eta}_s) ds$ follows from Cauchy-Schwarz inequality and \eqref{Gn square bound}. This concludes the proof.
\end{proof}

\subsection{Concluding the proof} In the last three sections, we have shown that the finite dimensional distribution of $\{\Gamma^n (t)\}$ converges to that of $\{\Gamma (t)\}$. In Section \ref{sec tightness}, we shall prove that the process $\{\Gamma^n (t)\}$ is tight in the space $C([0,T], \R)$ endowed with the uniform topology. This is enough to prove Theorem \ref{thm: fluctuation}.

\section{Correlation estimates}\label{sec correlation}

Given $k$ times $0\leq t_1\leq t_2 \leq \cdots \leq t_k\leq T$ and $k$ points $x_1,\dots, x_k$ in $\bb Z^d$, let us define the correlation function as
\begin{equation}\label{defgcor}
	\phi(t_1,\dots, t_k; x_1,\dots,x_k)\;:=\; \bb E^n_{\nu^n_{\rho_0(\cdot)}}\Big[\prod_{i=1}^k \overline{\eta}_{t_i}(x_i) \Big].
\end{equation}
Note that we do not require that $t_1,\ldots, t_k$ or that $x_1,\dots,x_k$ are distinct from each other. The aim of this section is to give an upper bound on the correlation function $\phi$. 

In \cite{erhard2024nonequilibrium}, a neat general upper bound for the correlation function with multiple times and arbitrary number of particles was obtained for the SSEP on $\bb Z$. However, for $d\geq 2$, such a general upper bound seems to be quite complex. Since our goals are just to prove the tightness of the occupation time and to verify \eqref{eqn 1}, below we will only estimate the space-time correlation terms needed for our purposes. Since we assume $T$ is fixed, to simplify the computation, we shall use the bound $\log(1+n^2t)\lesssim \log n$ frequently for all $t\leq T$.

\subsection{Correlation estimates at one time} The following lemma concerns the estimates of the correlation function at one time.

\begin{lemma}[one time]\label{onetime}
	Fix an integer $k\geq 2$.  Then there exists a constant $C=C(\rho_0,T)$ such that for all $0\leq t\leq T$,
	$$\sup_{x_1,\dots,\, x_k\,\,\text{\rm distinct}}\big\lvert \phi(t,\dots, t; x_1,\dots,x_k) \big\rvert \;\leq\; 
	\begin{cases}
	Cn^{-k}, \quad &\text{if}\;\;d\geq 3,\\
	Cn^{-k}\log^{k-1}(1+n^2 t), \quad &\text{if}\;\;d=2.
	\end{cases}$$
\end{lemma}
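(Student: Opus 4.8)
The plan is to exploit the self-duality of the SSEP to express the $k$-point correlation $\phi(t,\dots,t;x_1,\dots,x_k)$ in terms of $k$ independent random walks, and then to control the resulting expression using the local central limit theorem estimate \eqref{est 1} and collision counting. Concretely, I would first recall the duality identity: for the SSEP, $\E^n_{\nu^n_{\rho_0(\cdot)}}\big[\prod_{i=1}^k \bar\eta_t(x_i)\big]$ can be written as an expectation over $k$ labelled exclusion random walks $(X_1(t),\dots,X_k(t))$ started at $(x_1,\dots,x_k)$ of a product-type functional of the initial \emph{centered} occupation variables. Because the initial measure $\nu^n_{\rho_0(\cdot)}$ is a product measure, the centered variables $\bar\eta_0(y)=\eta_0(y)-\rho_0(y/n)$ are independent with mean zero, so the only surviving terms in the dual expansion are those in which the walks have been paired up by collisions; an unpaired walk contributes a factor $\E[\bar\eta_0(\cdot)]=0$. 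This is the key combinatorial mechanism: $\phi$ is a sum over pairings (more precisely, over the event that the exclusion walks have collided enough to identify sites), and each term is bounded by a product over pairs of collision probabilities for a pair of independent continuous-time random walks, times the (bounded) factors $\chi(\rho_0(\cdot))$ or products of densities.

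Second, I would reduce from exclusion walks to independent walks: the exclusion dynamics among the $k$ dual particles can be coupled with $k$ fully independent rate-$1$ walks so that collision/meeting events are comparable, at the cost of constants depending only on $k$. Then the core estimate becomes: for two independent continuous-time simple random walks on $\Z^d$ accelerated by $n^2$, started at distinct sites, the probability that they are at the same site at time $t$, or that they have met by time $t$, is bounded by the discrete heat kernel $q_t(0,0)\lesssim (tn^2)^{-d/2}$, and integrating such quantities over $[0,t]$ to handle the Duhamel/collision-time structure produces the Green's-function type bound. In $d\geq 3$ the relevant integral $\int_0^\infty q_s(0,0)\,ds$ converges and contributes an $O(n^{-2})$ factor per collision; since $k$ particles need at least $\lceil k/2\rceil$ collisions — actually the precise bookkeeping gives $k$ when one tracks that each of the $k$ points must be "reached," yielding the clean $n^{-k}$ — one gets $Cn^{-k}$. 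In $d=2$ the same integral diverges logarithmically, $\int_0^t q_s(0,0)\,ds\lesssim n^{-2}\log(1+n^2t)$, and tracking $k-1$ such factors (the last point being pinned by translation-type reasoning) gives the extra $\log^{k-1}(1+n^2t)$. Throughout I would use \eqref{est 1} to pass between $q_t(0,x)$ and the Gaussian kernel and the bound $\log(1+n^2t)\lesssim\log n$ only at the very end if needed.

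The cleanest route organizationally is an induction on $k$: peel off one particle, condition on its first collision with the cloud of the others, use the strong Markov property, and apply the inductive hypothesis to the remaining $k-1$ particles after the collision (which reduces the count), with the base case $k=2$ being the two-walk collision estimate above. The bounded-density factors $\rho_0(x/n)$ and $\chi(\rho_0(x/n))$ are harmless since $\rho_0$ takes values in $[0,1]$, and the fact that $\rho^n_t(x)$ rather than $\rho_0(x/n)$ appears after time $t$ is absorbed because $0\le\rho^n_t(x)\le 1$ as well, so no regularity of $\rho_0$ is needed here (that is only used later for \eqref{convergence 1}).

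I expect the main obstacle to be the bookkeeping in the collision/pairing expansion for general $k$ in $d\geq 2$: making precise that the dual expansion of $\E[\prod \bar\eta_t(x_i)]$ really is dominated by a sum over collision histories in which \emph{every} walk participates in a collision (so that no unpaired, mean-zero factor is left), and that the number of independent "Green's function factors" one extracts is exactly $k$ (resp. comes with $k-1$ logarithms in $d=2$) rather than $\lceil k/2\rceil$. The subtlety is that a single collision identifies two walks but one must then continue to track whether the merged pair separates again and recollides, so the honest statement is a bound by a sum over set partitions of $\{1,\dots,k\}$ into blocks, with each block of size $m$ contributing roughly $n^{-m}$ (resp. with logarithmic corrections), and one checks $\sum_{\text{partitions}} n^{-(\text{total size})} = n^{-k}\cdot(\text{number of partitions})\lesssim_k n^{-k}$. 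Getting the $d=2$ logarithm exponent to be exactly $k-1$ and uniform over distinct $x_1,\dots,x_k$ — in particular not degrading when some $x_i$ are close together — is the delicate point, and is handled by always extracting the heat kernel at the \emph{difference} of two positions and using that the worst case $|x_i-x_j|$ small only makes the collision more likely but is still controlled by $q_s(0,0)$.
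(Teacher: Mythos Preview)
Your proposed duality identity is incorrect, and this is the fatal gap. You assert that $\E^n_{\nu^n_{\rho_0(\cdot)}}\big[\prod_{i=1}^k \bar\eta_t(x_i)\big]$ equals an expectation over $k$ labelled \emph{exclusion} walks of a product of the initial centered variables $\bar\eta_0(X_i(t))$. Test this with $k=2$: exclusion walks remain at distinct sites forever, and under the product measure $\nu^n_{\rho_0(\cdot)}$ the variables $\bar\eta_0(y_1),\bar\eta_0(y_2)$ are independent mean-zero for $y_1\neq y_2$, so your right-hand side is identically zero. The left-hand side is the genuine two-point correlation, which is nonzero (of order $n^{-2}$). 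The reason is that self-duality holds for the \emph{uncentered} product $\prod_i\eta_t(x_i)$; the centering $\bar\eta_t(x_i)=\eta_t(x_i)-\rho^n_t(x_i)$ subtracts $\rho^n_t$ at the fixed starting site $x_i$, not at the random dual endpoint $X_i(t)$, and hence does not pass through the duality as $\bar\eta_0(X_i(t))$. Your whole pairing-by-collision mechanism (``unpaired walks contribute $\E[\bar\eta_0(\cdot)]=0$'') rests on this false identity.

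Relatedly, your claim that ``no regularity of $\rho_0$ is needed here'' is wrong and is diagnostic of the missing idea. The paper's argument (following Ferrari) writes an evolution equation for $\varphi_t(\mathbf{x})$ under the $k$-particle stirring generator $L_k$; the source term $\Psi$ contains the discrete gradient $\rho^n_r(x_j)-\rho^n_r(x_i)$ for neighbouring sites, which is $O(1/n)$ \emph{because} $\rho_0$ is smooth on the macroscopic scale. This is what converts the factor $n^2$ coming from the accelerated generator into a factor $n$ in front of the lower-order correlation, yielding the Duhamel recursion
\[
A^k_t\;\lesssim\;\int_0^t\Big(\tfrac{1}{1+n^2(t-r)}\Big)^{d/2}\big(nA^{k-1}_r+A^{k-2}_r\big)\,dr,
\]
with $A^0_t=1$, $A^1_t=0$; iterating with Lemma~\ref{intsingle} gives $n^{-k}$ (and the $\log^{k-1}$ in $d=2$). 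Without the $O(1/n)$ gradient, the recursion does not close and one recovers only the trivial bound. So the route you should take is: derive $\partial_t\varphi_t=L_k\varphi_t+\Psi$ as in \eqref{defPsi}, apply Duhamel with $s=0$ so the initial term vanishes, bound the neighbouring-particle probability via Lemma~\ref{coupling}, and induct on $k$.
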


\begin{proof}
 To simplify the notation, we write
\begin{equation}\label{defvp}
	\varphi_t(x_i: 1\leq i\leq k)\;=\;  \bb E^n_{\nu^n_{\rho_0(\cdot)}}\Big[ \prod_{i=1}^{k} \overline{\eta}_{t}(x_i)\Big]\,,\qquad t\geq 0\,.
\end{equation}
Moreover, let us denote the vector $(x_i:1\leq i\leq k)$ by $\mathbf x$.
A tedious but straightforward computation (see also~\cite{ferrari1991symmetric}) shows that
\begin{equation*}
	\begin{split}
		\partial_t\varphi_t(\mathbf x)\;=\; &L_{k}\,\varphi_t(\mathbf x)\\
		&+ 2n^2 \sum_{i,j=1}^{k} \one\{|x_{j}-x_i|=1\}\big(\rho_t^n(x_{j})-\rho_t^n(x_i)\big)\big[\varphi_t(\mathbf x \backslash x_{j})-\varphi_t(\mathbf x \backslash x_i)\big]\\
		&- n^2 \sum_{i,j=1}^{k} \one\{|x_j-x_i|=1\}\big(\rho_t^n(x_{j})-\rho_t^n(x_i)\big)^2 \varphi_t(\mathbf x \backslash \{x_i,x_j\})\,,\\
	\end{split}
\end{equation*}
where $\varphi_t(\mathbf x\backslash A)=\varphi_t(\{x_i: 1\leq i\leq k\}\backslash A)$ for any subset $A\subset\{x_1,\dots,x_{k}\}$(when $A=\{x_i\}$ is a single point set, we simply write it as $x_i$), and $L_{k}$ is the generator of the SSEP with $k$ labelled particles accelerated by a factor $n^2$. Since labeling change does not affect the value of the correlation, strictly speaking, the actual dynamics we shall use in the correlation estimates is the stirring dynamics. Denote by $(\mathbf{X}^i:1\leq i\leq k)$ the accelerated stirring process with $k$ labelled particles, and by $\mathbf{P}_{(x_i\,:\,1\leq i\leq k)}$(resp.\ $\mathbf{E}_{(x_i\,:\,1\leq i\leq k)}$) the probability (resp.\ expectation) with respect to $(\mathbf{X}^i:1\leq i\leq k)$ starting from initial points $(x_i:1\leq i\leq k)$. By $\mathbf{X}^i_t$ we represent the location at time $t$ of the particle which was at site $x_i$ at time $0$.
Applying Duhamel's Principle, for any $t>0$, we can write $\varphi_t(\mathbf x)$ as the sum of an initial term and an integral term: for any $0\leq s <t$,
\begin{equation}\label{indeq}
		\begin{split}
			\varphi_t(\mathbf x)\;=\; \mathbf{E}_{\mathbf x}\big[\varphi_s(\mathbf{X}^i_{t-s}: 1\leq i\leq k) \big] \,+\mathbf{E}_{\mathbf x}\Big[ \int_s^t \Psi(\mathbf{X}_{t-r},r) dr\Big]
	\end{split}
\end{equation}
where for every integer $k\geq 1$,
\begin{equation}\label{defPsi}
	\begin{split}
		\Psi(\mathbf x,r) \;:=\;
		&2n^2  \sum_{i,j=1}^{k} \one\{|x_j-x_i|=1\}\big(\rho_{r}^n(x_j)-\rho_{r}^n(x_i)\big)\big[\varphi_{r}(\mathbf x\backslash x_j)-\varphi_{r}(\mathbf x\backslash x_i)\big]\\
		&- n^2\sum_{i,j=1}^{k} \one\{|x_j- x_i|=1\}\big(\rho_{r}^n(x_j)-\rho_{r}^n(x_i)\big)^2 \varphi_{r}(\mathbf x\backslash \{x_i,x_j\})\,.
	\end{split}
\end{equation}

Let
\begin{equation*}
	\Lambda_k \,=\,\big\{\mathbf w=(w_1,\cdots,w_j)\in(\bb Z^d)^k: w_i\neq w_j,\; \forall \; i\neq j  \}.
\end{equation*}
be the set of configurations with distinct points.
 The initial term in\eqref{indeq} can be written as  
\begin{equation*}
	\sum_{\mathbf y\in \Lambda_k}p_{t-s}(\mathbf x,\mathbf y)\,	\varphi_s(\mathbf y).
\end{equation*}
where $p_t(\cdot,\cdot)$ is the transition probability for the labeled stirring process speeded up by $n^2$. 

Define the maximal correlation at time $t$ by
\begin{equation*}
	A_t^k\,=\,\sup_{x_i's\; \text{are distinct}} 	|\varphi_t(x_i: 1\leq i\leq k)|.
\end{equation*}
Then obviously $A_t^1=0$ and we set $A_t^0=1$.

Using the trivial bound
\begin{equation}\label{difvarphi}
\big\lvert \varphi_{r}(\mathbf x\backslash x_j)-\varphi_{r}(\mathbf x\backslash x_i)\big\rvert \leq 2 A_r^{k-1},
\end{equation}
the integral term in \eqref{indeq} can be absolutely  bounded by
\begin{equation*}
	C\int_s^t 	\sum_{i,j=1}^{k} \mathbf{P}_{\mathbf x}[|\mathbf{X}_{t-r}^j- \mathbf{X}_{t-r}^i|=1]\big(n A_r^{k-1}\,+\,A_r^{k-2}\big)dr.
\end{equation*}
Combining these estimates with Lemma \ref{coupling} yields that
\begin{equation}\label{ind1}
	\begin{split}	
		|\varphi_t(\mathbf x)|\,
	\lesssim\,  \sum_{\mathbf y\in \Lambda_k}p_{t-s}(\mathbf x,\mathbf y)\,	\varphi_s(\mathbf y)\,+\,\int_s^t \Big(\frac{1}{1+n^2(t-r)}\Big)^{\frac{d}{2}} (nA_r^{k-1}+A_r^{k-2})dr,
	\end{split}
\end{equation}
for any $k\geq 2$.

Take $s=0$, then the initial term vanishes because the process starts from a Bernoulli product measure.
Thus
\begin{equation*}
	A_t^k\,\lesssim\,\int_0^t \Big(\frac{1}{1+n^2(t-r)}\Big)^{\frac{d}{2}} \Big(nA_r^{k-1}+A_r^{k-2}\Big)dr
\end{equation*} 
for $k\geq 2$.
This recursion relation, together with initial values $A_t^1=0$ and $A_t^0=1$ and Lemma \ref{intsingle},
 gives the desired result.
\end{proof}

It turns out that the correlation estimates established above are not sharp enough for our purpose  when $d=2$. The lack of sharpness originates from the loose inequality \eqref{difvarphi}.  To obtain a sharper correlation estimate in this case, we must refine our bound on the difference $\big\lvert \varphi_{r}(\mathbf x\backslash x_j)-\varphi_{r}(\mathbf x\backslash x_i)\big\rvert$.
\begin{lemma}\label{onetime2d}
Assume $d=2$. Fix an integer $k\geq 2$.  Then there exists a constant $C=C(\rho_0,T)$ such that for all $0\leq t\leq T$,
\begin{equation*}
\begin{split}
&\sup_{x_1,\dots,\, x_k\,\,\text{\rm distinct}}\big\lvert \phi(t,\dots, t; x_1,\dots,x_k) \big\rvert \\
&\leq\, 
\begin{cases}
	Cn^{-k}\log^{\frac{k}{2}}(1+n^2 t), \quad &\text{if}\;\;k\;\text{is even},\\
	Cn^{-k}\log^{\frac{k-1}{2}}(1+n^2 t)\log(1+\log(1+n^2 t)), \quad &\text{if}\;\;k\;\text{is odd}.
\end{cases}
\end{split}
\end{equation*}
\end{lemma}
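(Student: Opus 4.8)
The discussion just above the statement identifies the culprit: the crude bound \eqref{difvarphi}, which discards the fact that the arguments of $\varphi_r(\mathbf x\setminus x_j)$ and $\varphi_r(\mathbf x\setminus x_i)$ differ only by moving one point to a \emph{neighbouring} site. The plan is to treat that difference as a discrete gradient and to carry it through the Duhamel hierarchy. Alongside $A_t^k$ I would introduce, for $\ell\ge1$,
\[
B_t^\ell:=\sup\big|\varphi_t(w_1,\dots,w_{\ell-1},w)-\varphi_t(w_1,\dots,w_{\ell-1},w')\big|,
\]
the supremum over all $\ell$ distinct sites $w_1,\dots,w_{\ell-1},w$ together with a site $w'\notin\{w_1,\dots,w_{\ell-1}\}$ with $|w-w'|=1$ (permutation symmetry of $\varphi$ makes differencing in the last slot no loss); trivially $B_t^0=B_t^1=0$. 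Rerunning the derivation of \eqref{ind1} with $s=0$ but replacing \eqref{difvarphi} by $|\varphi_r(\mathbf x\setminus x_j)-\varphi_r(\mathbf x\setminus x_i)|\le B_r^{k-1}$ (valid since $|x_i-x_j|=1$), and using that the initial term vanishes for the product start, yields for $k\ge2$
\[
A_t^k\;\lesssim\;\int_0^t\Big(\tfrac1{1+n^2(t-r)}\Big)^{d/2}\big(n\,B_r^{k-1}+A_r^{k-2}\big)\,dr .
\]

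The heart of the matter is then a companion recursion for $B_t^\ell$. Given two $\ell$-point configurations $\mathbf y,\mathbf y'$ agreeing off one slot, where they occupy neighbouring sites, I would write \eqref{indeq} with $s=0$ for each and couple the two labelled stirring processes through common edge clocks; the coupled pair disagrees at every time at exactly two sites — a ``discrepancy pair'' born at distance $1$ — and subtracting the two formulas gives $\varphi_t(\mathbf y)-\varphi_t(\mathbf y')=\mathbf E\big[\int_0^t(\Psi(\mathbf X_{t-r},r)-\Psi(\mathbf X'_{t-r},r))\,dr\big]$. Expanding $\Psi$ term by term and using that it only ever differences correlations at \emph{adjacent} removed sites, the integrand decomposes into: (a) genuine second-order differences, bounded by $n\,B_r^{\ell-1}$ plus lower-order differences; and (b) a bounded number of ``boundary'' contributions — those in which the adjacent pair in $\Psi$ actually meets a discrepancy site — bounded by $n\,A_r^{\ell-1}+A_r^{\ell-2}$ but carrying the indicator that a discrepancy site is adjacent to another stirring particle. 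For (b) I would use the local central limit theorem for the stirring process (Lemma~\ref{coupling}) and the fact that the discrepancy pair starts at distance $1$ to show that, after integrating in time and in $d=2$, this contribution is smaller than the naive $\log(1+n^2t)$ by a further logarithmic factor. This is enough to make $B^\ell$ carry strictly fewer logarithms than $A^\ell$, up to one iterated-logarithmic correction.

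Finally I would substitute the $B$-estimate back into the recursion for $A$ and unwind it with Lemma~\ref{intsingle}. The decisive point in $d=2$ is that $n\,B_r^{k-1}$ is now only \emph{comparable} to $A_r^{k-2}$ rather than dominant (as $n\,A_r^{k-1}$ was in Lemma~\ref{onetime}), so each descent of the hierarchy by two particles costs one factor $n^{-2}\log(1+n^2t)$, which produces $\log^{k/2}(1+n^2t)$ when $k$ is even; when $k$ is odd the hierarchy terminates at a single leftover particle and the iterated-logarithmic slack from the $B$-recursion becomes the stated factor $\log(1+\log(1+n^2t))$. I expect the real obstacle to be the estimate in (b) above — quantifying the smallness of the boundary terms by tracking the coupled discrepancies — which is precisely where the extra iterated logarithm in the odd case is generated; the remaining work is the careful but essentially routine bookkeeping of the two coupled recursions.
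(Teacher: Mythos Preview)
Your overall architecture matches the paper's: introduce the nearest-neighbour gradient $B_t^\ell$ (the paper writes $\mf A_t^\ell$), rerun Duhamel to get $A_t^k\lesssim\int_0^t(1+n^2(t-r))^{-1}(nB_r^{k-1}+A_r^{k-2})\,dr$, and seek a companion recursion for $B$ carrying an extra small factor. The gap is in how you extract that factor. Under the common-edge-clock coupling the two discrepancy sites $X^\ell_{t-r},X'^\ell_{t-r}$ never coalesce and in general are no longer neighbours. Your term (a) --- pairs inside the common background $\mathbf z$ --- contributes $\int_0^t(1+n^2(t-r))^{-1}nB_r^{\ell-1}\,dr$, a homogeneous recursion with $B^1=0$, so the size of $B$ is dictated entirely by (b). But (b) --- pairs in $\Psi$ involving a discrepancy site --- is bounded by $nA_r^{\ell-1}+A_r^{\ell-2}$ times the indicator that $X^\ell$ (or $X'^\ell$) is adjacent to some \emph{bulk} particle $z_i$; that probability is $\lesssim(1+n^2(t-r))^{-1}$ by Lemma~\ref{coupling}, and the fact that $X^\ell_0$ and $X'^\ell_0$ are neighbours of \emph{each other} says nothing about either being a neighbour of a third particle later. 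Concretely, at $\ell=2$ this yields only $B_t^2\lesssim n^{-2}\log(1+n^2t)$, identical to $A_t^2$; plugging back into the $A$-recursion gives $A_t^3\lesssim n^{-3}\log^2(1+n^2t)$ rather than the claimed $n^{-3}\log(1+n^2t)\log(1+\log(1+n^2t))$.

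The paper instead chooses a coupling in which the difference can vanish. Run $k+1$ labelled particles from $(\mathbf z,z_k,z_{k+1})$ with $|z_k-z_{k+1}|=1$: all pairs stir, except that the two special particles move \emph{independently} of each other (ignoring mutual exclusion) until their first meeting time $\tau$, after which they coalesce. Each marginal $(\mathbf z,z_k)$ and $(\mathbf z,z_{k+1})$ is a genuine $k$-particle stirring process, and after $\tau$ the two marginals coincide, so the integrand $\Psi(\mathbf Z\setminus Z^k,r)-\Psi(\mathbf Z\setminus Z^{k+1},r)$ carries the factor $\one\{\tau>t-r\}$. The two-dimensional recurrence estimate $\mathbf P[\tau>t]\lesssim(1+\log(1+n^2t))^{-1}$ then inserts the factor $(1+\log(1+n^2(t-r)))^{-1}$ into the $B$-recursion, and it is precisely this factor that produces the iterated logarithm for odd $k$. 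Your instinct that ``born at distance $1$'' is the key is correct; the way to exploit it is through coalescence and the 2D meeting-time bound, not through adjacency to bulk particles.
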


Before proving the lemma, we introduce a coupling to be used in the proof. The coupling essentially follows the ideas of \cite{AIHPB_2008__44_2_341_0}. Fix $k\geq 2$, $\mathbf z\in\Lambda_{k-1}$, and two points $z_k,z_{k+1}\in\bb Z^d$ such that $|z_k-z_{k+1}|=1$ and $z_i$'s with $1\leq i\leq k+1$ are all distinct. Consider $k+1$ labeled particles evolving on $\bb Z^d$ according to the following rules. They start from $\mathbf z$, $z_k$, $z_{k+1}$ and evolve according to an (accelerated) stirring process. More precisely,  each (non-oriented) edge $(a,b)$ is associated to an independent Poisson clock with rate $n^2$. When the clock at edge $(a,b)$ rings, the contents at two ends swap. However, when the particles starting at $z_k$ and $z_{k+1}$ are at distance $1$, each one jumps, independently
from the other, to the site occupied by the other at the rate $n^2$. In other words, if these two particles at two ends of some edge $(a,b)$, then two independent Poisson clocks with rates $n^2$ are associated to edge $(a,b)$ of two orientations, and particles jump according to the clock without respecting the exclusion rule.
 Once these particles occupy the same site, they remain together forever. Notice that the two particles starting from $z_k$ and $z_{k+1}$ behave until they meet exactly as two independent particles. let $\tau$ be the first time that these two particles meet. Then in dimension two, 
 \begin{equation}\label{meeting}
 	\mathbf P_{(\mathbf z,z_k,z_{k+1})}\big[\tau>t\big]\,\lesssim\, \frac{1}{1+\log(1+n^2t)}.
 \end{equation}
 where, abusing the notation, $\mathbf P_{(\mathbf z,z_k,z_{k+1})}$(resp. $\mathbf E_{(\mathbf z,z_k,z_{k+1})}$) denotes the probability (resp. expectation) corresponding to the evolution just described. 
 
 Denote by $\mathbf Z\in\Lambda_{k+1}$ the vector of positions of particles starting from $\mathbf z, z_k,z_{k+1}$.  Notice that particles starting from $\mathbf z$ and either one from $z_k$ or $z_{k+1}$ form a stirring process with $k$ labeled particles. 

\begin{proof}[Proof of Lemma \ref{onetime2d}]
	Let us denote
	\begin{equation*}
		\mf A_t^{k-1}\,=\,\sup_{\substack{\mathbf x\in \Lambda_{k}\\1\leq i,j\leq k}}	\Big|\one\{|x_j-x_i|=1\}\big[\varphi_{t}(\mathbf x\backslash x_j)-\varphi_{t}(\mathbf x\backslash x_i)\big]\Big|
	\end{equation*}
	for every $k\geq 1$.
With this notation, we can estimate the absolute value of the integral term in \eqref{indeq} by 
\begin{equation*}
\int_s^t \sum_{i,j=1}^{k} \mathbf{P}_{(x_i\,:\, 1\leq i\leq k)}[|\mathbf{X}_{t-r}^j- \mathbf{X}_{t-r}^i|=1] (n\mf A_r^{k-1}+A_r^{k-2})dr.
\end{equation*}
This estimate together with the arguments from the proof of Theorem \ref{onetime} gives
\begin{equation}\label{2donetime1}
	A_t^k\,\lesssim\, \sup_{\mathbf x\in \Lambda_k}\sum_{\mathbf y\in \Lambda_k}p_{t-s}(\mathbf x,\mathbf y)\,	\varphi_s(\mathbf y)\,+\,\int_s^t \frac{1}{1+n^2(t-r)} (n\mf A_r^{k-1}+A_r^{k-2})dr.
\end{equation}
By \eqref{indeq} and \eqref{defPsi}, for $\mathbf x\in\Lambda_k$ such that $|x_i-x_j|=1$ for some $i<j$, let 
\begin{equation*}
	\mathbf z\,=\, \mathbf{x}\backslash\{x_i,x_j\}\in\Lambda_{k-2}, \quad z_k=x_i,\quad z_{k+1}=x_j.
\end{equation*}
Note that 
\begin{equation*}
	\mathbf x\backslash x_j\,=\,(x_1,\cdots,x_{j-1},x_{j+1},\cdots,x_k)
\end{equation*}
and 
\begin{equation*}
	(\mathbf z, z_k)\,=\,(x_1,\cdots,x_{i-1},x_{i+1},\cdots,x_{j-1},x_{j+1},\cdots,x_k,x_i)
\end{equation*}
are not necessarily equal. 
Since label changing does not affect the value of the correlation,
\begin{equation*}
\varphi_{r}(\mathbf x\backslash x_j)\,=\,\varphi_{r}((\mathbf z,z_k)),\quad \varphi_{r}(\mathbf x\backslash x_i)\,=\,\varphi_{r}((\mathbf z,z_{k+1})).
\end{equation*}
Therefore we have 
\begin{equation*}
	\begin{split}
	&\big\lvert \varphi_{t}(\mathbf x\backslash x_j)-\varphi_{t}(\mathbf x\backslash x_i)\big\rvert\\
	\lesssim\,&	\sum_{\mathbf y\in\Lambda_{k} }\big\lvert p_{t-s}((\mathbf z,z_k),\mathbf y)\,-\,p_{t-s}((\mathbf z,z_{k+1}),\mathbf y) \big\rvert\,	|\varphi_s(\mathbf y)|\\
	+\,&\Big\lvert\mathbf{E}_{(\mathbf z,z_k,z_{k+1})}\Big[ \int_s^t \Psi(\mathbf{Z}_{t-r}\backslash Z^k_{t-r},r) \,-\,\Psi(\mathbf{Z}_{t-r}\backslash Z^{k+1}_{t-r},r) dr\Big]\Big\rvert.
	\end{split}
\end{equation*}
Since $\mathbf{Z}\backslash Z^{k}$ and $\mathbf{Z}\backslash Z^{k+1}$ coincide  after the meeting time $\tau$, the second term can be written and estimated as 
\begin{equation*}
	\begin{split}
	&\Big\lvert\mathbf{E}_{(\mathbf z,z_k,z_{k+1})}\Big[ \int_s^t \one_{\tau>t-r}\big[ \Psi(\mathbf{Z}_{t-r}\backslash Z^k_{t-r},r) \,-\,\Psi(\mathbf{Z}_{t-r}\backslash Z^{k+1}_{t-r},r)\big] dr\Big]\Big\rvert\\
	\lesssim\,&\int_s^t \big\{ n\mf A_r^{k-2}+A_r^{k-3}\big\} \sum_{\substack{1\leq i<j\leq k+1\\ (i,j)\neq (k,k+1)}}\mathbf P_{(\mathbf z,z_k,z_{k+1})} \big[\tau>t-r, |\mathbf Z^i_{t-r}-\mathbf Z^j_{t-r}|=1\big]dr
	\end{split}
\end{equation*}
Replacing the indicator function $\one_{\tau>t-r}$ by $\one_{\tau>(t-r)/2}$,  then applying the Markov property at time $(t-r)/2$, and finally applying Lemma \ref{coupling} to be proved later, the previous expression can be bounded by
\begin{equation*}
	\begin{split}
&C\int_s^t \big\{ n\mf A_r^{k-2}+A_r^{k-3}\big\} \frac{1}{1+n^2(t-r)} \mathbf P_{(\mathbf z, \,z_k,z_{k+1})} \big[\tau>(t-r)/2 \big]dr\\
\lesssim\,&\int_s^t \big\{ n\mf A_r^{k-2}+A_r^{k-3}\big\} \frac{1}{1+n^2(t-r)}\frac{1}{1+\log(1+n^2(t-r))}  dr,
	\end{split}
\end{equation*}
where the last inequality is due to \eqref{meeting}.
From the above estimate we can conclude that,
\begin{equation}\label{2donetime2}
		\begin{split}
			\mf A_t^{k-1}\,\lesssim\, &  \sup_{\mathbf x\in\Lambda_{k}}\sum_{\mathbf y\in\Lambda_{k} }\big\lvert p_{t-s}((\mathbf z,z_k),\mathbf y)\,-\,p_{t-s}((\mathbf z,z_{k+1}),\mathbf y) \big\rvert\,	|\varphi_s(\mathbf y)|\\
			+\,& \int_0^t\frac{1}{1+n^2(t-r)} \frac{1}{1+\log(1+n^2(t-r))} (n\mf A_r^{k-2}+A_r^{k-3})dr.
		\end{split}
\end{equation}
Recursion relations \eqref{2donetime1} and \eqref{2donetime2} with $s=0$ give that, for $k\geq 2$, 
\begin{equation*}
	A_t^k\;\leq\; 
	\begin{cases}
		n^{-k}\log^{\frac{k}{2}}(1+n^2 t), \quad &\text{if}\;\;k\;\text{is even},\\
		n^{-k}\log^{\frac{k-1}{2}}(1+n^2 t)\log(1+\log(1+n^2 t)), \quad &\text{if}\;\;k\;\text{is odd},
	\end{cases}
\end{equation*}
and 
\begin{equation*}
\mf	A_t^k\;\leq\; 
	\begin{cases}
		n^{-k}\log^{\frac{k-2}{2}}(1+n^2 t)\log(1+\log(1+n^2 t)), \quad &\text{if}\;\;k\;\text{is even},\\
		n^{-k}\log^{\frac{k-3}{2}}(1+n^2 t)\log^2(1+\log(1+n^2 t)), \quad &\text{if}\;\;k\;\text{is odd}.
	\end{cases}
\end{equation*}
In fact, to verify these bounds, one just needs to compute the bounds for $k=2,3$ and then observes that the bound for $A_t^k$ is increasing in time, the bound for $n\mf A_r^{k-1}$ is less than that of $A_r^{k-2}$, using \eqref{eqd2} and 
\begin{equation*}
	\int_s^t \frac{1}{1+n^2(t-r)} \frac{1}{1+\log(1+n^2(t-r))} dr\,\leq\,\frac{1}{n^2} \log(1+\log(1+n^2 (t-s))),
\end{equation*}
which would give 
\begin{equation*}
	A_t^k\,\lesssim\,\frac{1}{n^2}\log(1+n^2 t)A_t^{k-2}
\end{equation*}
and 
\begin{equation*}
	\mf A_t^k\,\lesssim\,\frac{1}{n^2}\log \big( 1+ \log(1+n^2 t) \big) A_t^{k-2}.
\end{equation*}
\end{proof}

We can extend the previous results to allow repetitive points, namely for points $\mathbf x=(x_1,\cdots,x_k)$ with $x_i=x_j$ for some $i,j$. 
For instance, using  the  identity
\begin{equation*}
	\overline{\eta}(x)^2\;=\;(1-2\rho^n(x))\overline{\eta}(x)+ \rho^n(x)-\big(\rho^n(x)\big)^2\,,
\end{equation*}
one have for a list of non-repetitive points $(x_i: 1\leq i\leq k)$,
\begin{equation}\label{reduce}
	\sup_{x_1,\dots,\, x_k\,\,\text{\rm distinct}}\big\lvert \phi(t,\dots, t,t; x_1,\dots,x_k,x_k) \big\rvert\,\lesssim\, A_t^k+A_t^{k-1},
\end{equation}
and
\begin{equation}\label{reduce1}
	\sup_{x_1,\dots,\, x_k\,\,\text{\rm distinct}}\big\lvert \phi(t,\dots, t,t; x_1,\dots,x_{k-1},x_k,x_{k-1},x_k) \big\rvert\,\lesssim\, A_t^k+A_t^{k-1}+A_t^{k-2},
\end{equation}
and so on.


\subsection{Correlation estimates at two times} In this subsection, we estimate the correlation function at two times.
\begin{lemma}[two times]\label{twotimes}
Fix $0< s<t\leq T$ and $y\in\bb Z^d$. Assume $x_1,x_2,x_3$ are distinct. Then 
\begin{equation*}
	\begin{split}
		|\phi(s,t;y,x_1)|\,	\lesssim\,
		\begin{cases}
			\frac{1}{n^2}\log n\,+\,\frac{1}{1+n^2(t-s)} \quad &\text{if}\;\; d=2\\
			\frac{1}{n^2}\,+\,\Big(\frac{1}{1+n^2(t-s)}\Big)^{\frac{d}{2}} \quad &\text{if}\;\; d\geq 3,
		\end{cases}
	\end{split}
\end{equation*}
and
\begin{equation*}
	\begin{split}
	|\phi(s,t,t;y,x_1,x_2)|\,\lesssim\,
		\begin{cases}
		\frac{\log n\log\log n}{n^3}\,+\,\frac{1}{n(1+n^2(t-s))} \quad &\text{if}\;\; d=2\\
			\frac{1}{n^3}\,+\,\frac{1}{n}\Big(\frac{1}{1+n^2(t-s)}\Big)^{\frac{d}{2}} \quad &\text{if}\;\; d\geq 3,
		\end{cases}
	\end{split}
\end{equation*}
and
\begin{equation*}
|\phi(s,t,t,t;y,x_1,x_2,x_3)|
		\lesssim
		\begin{cases}
			\frac{\log^2n}{n^4}\,+\,\frac{\log n}{n^2(1+n^2(t-s))}\quad &\text{if}\;\; d=2\\
			\frac{1}{n^4}\,+\,\frac{1}{n^2}\Big(\frac{1}{1+n^2(t-s)}\Big)^{\frac{d}{2}} \quad &\text{if}\;\; d\geq 3.
		\end{cases}
\end{equation*}
\end{lemma}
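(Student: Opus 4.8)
The plan is to condition on the configuration at the intermediate time $s$, use self-duality of the SSEP to transport the occupation variables at the later time $t$ back to time $s$, and then invoke the one-time correlation bounds of Lemmas \ref{onetime} and \ref{onetime2d} together with random-walk estimates for the stirring process. The single-point estimate is the cleanest: by the Markov property and the fact that $\E^n_{\nu^n_{\rho_0(\cdot)}}[\eta_t(x_1)\mid\mathcal F_s]=\sum_z q_{t-s}(x_1,z)\eta_s(z)$, combined with $\rho^n_t(x_1)=\sum_z q_{t-s}(x_1,z)\rho^n_s(z)$, one obtains $\E^n_{\nu^n_{\rho_0(\cdot)}}[\bar\eta_t(x_1)\mid\mathcal F_s]=\sum_z q_{t-s}(x_1,z)\bar\eta_s(z)$ and hence
\[\phi(s,t;y,x_1)\;=\;q_{t-s}(x_1,y)\,\chi(\rho^n_s(y))\;+\;\sum_{z\neq y}q_{t-s}(x_1,z)\,\phi(s,s;y,z).\]
The first term is $\lesssim(1+n^2(t-s))^{-d/2}$ by \eqref{est 1} and the Gaussian bound, and the sum is at most $A_s^2$, which by Lemmas \ref{onetime} and \ref{onetime2d} is $\lesssim n^{-2}\log n$ when $d=2$ (using $\log(1+n^2s)\lesssim\log n$) and $\lesssim n^{-2}$ when $d\geq 3$. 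This gives the first bound.

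For the multi-point estimates, fix $k-1\in\{2,3\}$, write $\phi(s,t,\dots,t;y,x_1,\dots,x_{k-1})=\E^n_{\nu^n_{\rho_0(\cdot)}}\big[\bar\eta_s(y)\,\E^n_{\nu^n_{\rho_0(\cdot)}}[\prod_i\bar\eta_t(x_i)\mid\mathcal F_s]\big]$, and expand the inner conditional correlation by the Duhamel/stirring identity \eqref{indeq}--\eqref{defPsi} applied to the process restarted at time $s$ (the derivation of that identity uses only $\partial_t\rho^n_t=\Delta_n\rho^n_t$ and the generator, so it carries over verbatim). This writes the conditional correlation as a stirring term $\mathbf{E}_{\mathbf x}[\prod_i\bar\eta_s(\mathbf{X}^i_{t-s})]$ plus an integral term of the same shape as $\int_s^t\Psi$, in which the one-site factors $\varphi_r(\cdot)$ are replaced by the conditional one-point correlations $\E[\bar\eta_r(z)\mid\mathcal F_s]=\sum_w q_{r-s}(z,w)\bar\eta_s(w)$ — which, unlike their unconditional analogues, do not vanish. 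Multiplying by $\bar\eta_s(y)$ and using that the auxiliary stirring particles are independent of $(\eta_\cdot)$, the stirring term becomes a one-time $k$-point correlation at time $s$ among $\{y,\mathbf{X}^1_{t-s},\dots,\mathbf{X}^{k-1}_{t-s}\}$: it is $\le A_s^k$ when these points are distinct, and when $y$ collides with some $\mathbf{X}^i_{t-s}$ — an event of probability $\lesssim(1+n^2(t-s))^{-d/2}$ — the reduction identities \eqref{reduce}--\eqref{reduce1} lower it to $A_s^{k-1}$ (plus lower orders) together with a term carrying the $O(1)$ coefficient $\chi(\rho^n_s(y))$.

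The integral term is treated the same way: inserting $\E[\bar\eta_s(y)\bar\eta_s(w)]=\chi(\rho^n_s(y))\one\{w=y\}+\phi(s,s;y,w)\one\{w\neq y\}$ and the slow-variation bound $|\rho^n_r(z_i)-\rho^n_r(z_j)|\lesssim n^{-1}$ for $|z_i-z_j|=1$ (from \eqref{eqn 2}), it reduces to integrals of the shape
\[n\int_0^{t-s}\mathbf{E}_{\mathbf x}\Big[\one\{|\mathbf{X}^i_u-\mathbf{X}^j_u|=1\}\,\big(q_{t-s-u}(\mathbf{X}^i_u,y)+A_s^2+\cdots\big)\Big]\,du\]
plus lower-order analogues, estimated by splitting $[0,t-s]$ at its midpoint, applying the Markov property of the stirring process there, and using the collision estimate of Lemma \ref{coupling}, the bound \eqref{est 1}, and, in $d=2$, the meeting-time estimate \eqref{meeting}. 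Collecting the contributions and comparing orders yields the claimed bounds, separately for $d=2$ — where the $\log$ and $\log\log$ factors must be tracked using the sharp one-time estimates of Lemma \ref{onetime2d} — and for $d\geq 3$.

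The main obstacle is the coincidence contribution of the stirring term. When $y$ equals one of the later particles, the reduction produces a term with the $O(1)$ coefficient $\chi(\rho^n_s(y))$ multiplied only by a collision probability $\lesssim(1+n^2(t-s))^{-d/2}$, and taken at face value this is far too large: for the two-point case in $d=2$ it would give $(1+n^2(t-s))^{-1}$ instead of the claimed $n^{-1}(1+n^2(t-s))^{-1}$. Such terms must cancel against the corresponding pieces generated by the $\rho^n_t(x_i)$-weighted lower-order correlations, the residue being governed by the $O(n^{-1})$ spatial variation of $\rho^n_s$. This is exactly why the computation must be organized around the centered (Duhamel) representation, whose integral term already carries the $\rho^n$-gradient factors; keeping those factors — and, in $d=2$, the logarithmic corrections — sharp is the heart of the argument.
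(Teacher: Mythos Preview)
Your plan is essentially the paper's own: freeze the time-$s$ point $y$, run the Duhamel/stirring identity \eqref{indeq}--\eqref{defPsi} in the later time variable, and feed the one-time bounds $A_s^k$ as initial data. The paper packages this as a recursion for
\[
B_t^k:=\sup_{x_i\text{ distinct}}|\phi(s,t,\dots,t;y,x_1,\dots,x_k)|,
\]
namely
\[
B_t^k\;\lesssim\;A_s^{k+1}+\Big(\tfrac{1}{1+n^2(t-s)}\Big)^{d/2}(A_s^k+A_s^{k-1})
+\one_{\{k\ge2\}}\!\int_s^t\!\Big(\tfrac{1}{1+n^2(t-r)}\Big)^{d/2}\big(nB_r^{k-1}+B_r^{k-2}\big)\,dr,
\]
with the sharper $d=2$ version using the gradient quantities $\mathfrak B_t^{k-1}$ in place of $B_t^{k-1}$.

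Where your write-up goes wrong is the ``main obstacle''. In the stirring term for $\phi(s,t,t;y,x_1,x_2)$, a coincidence $\mathbf X^i_{t-s}=y$ produces
\[
\E\big[\bar\eta_s(y)^2\bar\eta_s(z)\big]=(1-2\rho^n_s(y))\,\phi(s,s;y,z)+\chi(\rho^n_s(y))\,\E[\bar\eta_s(z)],
\]
and the $\chi$-piece multiplies $A_s^1=0$, not an $O(1)$ term. So there is no dangerous $(1+n^2(t-s))^{-1}$ contribution to cancel, and the ``cancellation against $\rho^n_t(x_i)$-weighted lower-order correlations'' you invoke does not occur (nor is it needed). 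Similarly for $k=3$ the $\chi$-piece multiplies $A_s^2\lesssim n^{-2}\log n$, already small enough.

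The genuine difficulty you have not pinned down is in the integral term for $d=2$. Bounding the difference $|\phi(s,r;y,w_1)-\phi(s,r;y,w_2)|$ (with $|w_1-w_2|=1$) crudely by $2B_r^1$ yields
\[
n\!\int_s^t\!\frac{B_r^1}{1+n^2(t-r)}\,dr\;\lesssim\;\frac{\log^2 n}{n^3}+\frac{\log n}{n(1+n^2(t-s))},
\]
which carries a spurious $\log n$ compared to the claimed $\frac{1}{n(1+n^2(t-s))}$. The paper removes it by introducing $\mathfrak B_r^{k-1}$, the supremum of the \emph{difference} $|\phi(s,r,\dots;y,\mathbf w\backslash w_j)-\phi(s,r,\dots;y,\mathbf w\backslash w_i)|$ over neighbouring $w_i,w_j$, and controlling it via the coupling behind \eqref{meeting} together with the transition-probability gradient bounds of Lemmas \ref{difp} and \ref{difpfixz}; this gives $\mathfrak B_r^1\lesssim(1+n^2(r-s))^{-3/2}+\dots$ and the integral then lands on target by \eqref{eq1d3}. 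Your reference to \eqref{meeting} points at the right tool, but the mechanism is a gradient estimate on the correlation, not a cancellation, and the $\varphi_r$'s in $\Psi$ are $(k-1)$- and $(k-2)$-point objects, not ``one-site factors''.
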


\begin{proof}
	Let us denote 
	\begin{equation*}
		B_t^k\,=\,\sup_{x_i's\; \text{are distinct}} 	|\phi(s,t,\cdots,t;y,x_1,\cdots,x_k)|.
	\end{equation*}
	Then obviously $B_t^0=A_s^1=0$. 
	
	We first deal with the case $d\geq 3$.  Following a similar procedure as in the derivation of \eqref{ind1}, we obtain the estimate:
	\begin{equation}\label{ind2}
		\begin{split}
			&|\phi(s,t,\cdots,t;y,x_1,\cdots,x_k)|\\
			\lesssim\, & \sum_{(y_i:1\leq i\leq k)\in\Lambda_{k}}p_{t-s}((x_i:1\leq i\leq k),(y_i:1\leq i\leq k))\,	|\phi(s,s,\cdots,s;y,y_1,\cdots,y_k)|\\ +\,&\one_{\{k\geq 2\}}\int_s^t\Big(\frac{1}{1+n^2(t-r)}\Big)^{\frac{d}{2}} (nB_r^{k-1}+B_r^{k-2})dr,
		\end{split}
	\end{equation}
	for all $k\geq 1$. The initial term is estimated by considering whether some $y_i$  is equal to $y$ or not. If $y_i\neq y$ for any $i$, then the total contribution is bounded simply by $A_s^{k+1}$. If there exists some $i$ such that $y_i=y$ (which can occur for at most one index), then by Lemma \ref{fixy} below and \eqref{reduce}, the total contribution is bounded by
	\begin{equation*}
		C\Big(\frac{1}{n^2(t-s)+1}\Big)^{d/2} (A_s^k+A_s^{k-1}).
	\end{equation*}
Combining these estimates together, we have
	\begin{equation}\label{indtwo}
	\begin{split}
		B_t^k\lesssim &\, A_s^{k+1}\,+\,\Big(\frac{1}{n^2(t-s)+1}\Big)^{\frac{d}{2}} (A_s^k+A_s^{k-1})\\ +\,&\one_{\{k\geq 2\}}\int_s^t\Big(\frac{1}{1+n^2(t-r)}\Big)^{\frac{d}{2}} (nB_r^{k-1}+B_r^{k-2})dr.
	\end{split}
	\end{equation}
	We now use this formula to estimate $B_t^k$ for $1\leq k\leq 3$.
	
For $k=1$, Theorem \ref{onetime} gives 
\begin{equation*}
	B_t^1\,\lesssim\,\frac{1}{n^2}\,+\,\Big(\frac{1}{1+n^2(t-s)}\Big)^{\frac{d}{2}}.
\end{equation*}

For $k=2$,  the first line at the right hand side of \eqref{indtwo} is bounded by
\begin{equation*}
	\frac{C}{n^3}\,+\,\Big(\frac{1}{1+n^2(t-s)}\Big)^{\frac{d}{2}}\frac{C}{n^2}.
\end{equation*}
 The integral term is bounded by 
\begin{equation*}
 C\int_s^t \Big(\frac{1}{1+n^2(t-r)}\Big)^{\frac{d}{2}}  nB_r^1dr \lesssim	\frac{1}{n^3}\,+\,\frac{1}{n}\Big(\frac{1}{(1+n^2(t-s))}\Big)^{\frac{d}{2}}
\end{equation*}
 by \eqref{eqd3}. These estimates above give the desired bound.

For $k=3$, the sum in the first line at the right hand side of \eqref{indtwo} is bounded  by
\begin{equation*}
	\frac{C}{n^4}\,+\,\frac{C}{n^2}\Big(\frac{1}{1+n^2(t-s)}\Big)^{\frac{d}{2}}.
\end{equation*}
 The integral term is bounded by 
\begin{equation*}
	C \int_s^t \Big(\frac{1}{1+n^2(t-r)}\Big)^{\frac{d}{2}} (nB_r^2+B_r^1)dr\,\lesssim\,\frac{1}{n^4}\,+\,\frac{1}{n^2}\Big(\frac{1}{1+n^2(t-s)}\Big)^{\frac{d}{2}}
\end{equation*}
by \eqref{eqd3} and \eqref{eq3d3}. 
 
 We now consider the case $d=2$. Define 
 \begin{equation*}
 	\mf B_t^{k-1}\,=\,\sup_{\substack{\mathbf x\in \Lambda_{k}\\1\leq i,j\leq k}}	\Big|\one\{|x_j-x_i|=1\}\big[\phi(s,t,\cdots, t; y,\mathbf x\backslash x_j)-\phi(s,t,\cdots,t;y,\mathbf x\backslash x_i)\big]\Big|
 \end{equation*}
 for every $k\geq 1$. Using arguments similar to those leading to \eqref{2donetime1} and \eqref{indtwo}, we obtain
 \begin{equation}\label{Bt}
 	\begin{split}
 	B_t^k\,\lesssim\, &A_s^{k+1}\,+\,\frac{1}{n^2(t-s)+1} (A_s^k+A_s^{k-1})\\
 	+\,&\one_{\{k\geq 2\}}\int_s^t \frac{1}{1+n^2(t-r)} (n\mf B_r^{k-1}+B_r^{k-2})dr.
 	\end{split}
 \end{equation}
 We can apply  similar argument for \eqref{2donetime2} to get that, 
  for $\mathbf x\in\Lambda_k$ such that $|x_i-x_j|=1$ for some $i$ and $j$, 
 \begin{equation*}
 	\begin{split}
 	&\big\lvert\phi(s,t,\cdots, t; y,\mathbf x\backslash x_j)-\phi(s,t,\cdots,t;y,\mathbf x\backslash x_i)\big\rvert\\
 	\lesssim\, &  \sum_{\mathbf y\in\Lambda_{k-1} }\big\lvert p_{t-s}((\mathbf{x}\backslash\{x_i,x_j\},x_i),\mathbf y)\,-\,p_{t-s}((\mathbf{x}\backslash\{x_i,x_j\},x_j),\mathbf y) \big\rvert\,	|\phi(s,s,\cdots,s;y,\mathbf y)|\\
 		+\,& \int_s^t\frac{1}{1+n^2(t-r)} \frac{1}{1+\log(1+n^2(t-r))} (n\mf B_r^{k-2}+B_r^{k-3})dr.
 	\end{split}
 \end{equation*}
 Using \eqref{reduce}, Lemmas \ref{difp} and  Lemma \ref{difpfixz}  below with $|I|=1$, we get
 \begin{equation}\label{mfBt}
 	\begin{split}
 	\mf B_t^k\,\lesssim\,&   \frac{\log(1+n^2(t-s))}{\sqrt{1+n^2(t-s)}}	A_s^{k+1}\,+\, \Big(\frac{1}{1+n^2(t-s)}\Big)^{\frac{3}{2}}\big[A_s^k+A_s^{k-1}\big]\\
 	+\,& \one_{\{k\geq 2\}}\int_s^t\frac{1}{1+n^2(t-r)} \frac{1}{1+\log(1+n^2(t-r))} (n\mf B_r^{k-1}+B_r^{k-2})dr.
 	\end{split}
 \end{equation}

 We now use \eqref{Bt} and \eqref{mfBt} to estimate $B_t^k$ and $\mf B_t^k$.
 
 For $k=1$, by Theorem \ref{onetime2d}, recalling that $A_s^1=0$ and $A_s^0=1$, we have
 \begin{equation*}
 	B_t^1\,\lesssim\,\frac{\log n}{n^2}\,+\,\frac{1}{1+n^2(t-s)}
 \end{equation*}
 and 
 \begin{equation*}
 	\mf B_t^1\,\lesssim\,\frac{1}{\sqrt{1+n^2(t-s)}}\frac{\log^2 n}{n^2}\,+\,\Big(\frac{1}{1+n^2(t-s)}\Big)^{\frac{3}{2}}.
 \end{equation*}

 For $k=2$, the sum in the first line at the right hand side of \eqref{Bt} is bounded by
 \begin{equation*}
 	\frac{C}{n^3}\log n\log\log n\,+\,\frac{C}{1+n^2(t-s)}\frac{1}{n^2}\log n.
 \end{equation*}
The integral term is bounded  by 
 \begin{equation*}
 	\int_s^t \frac{1}{1+n^2(t-r)} n\mf B_r^1dr\,\lesssim\,\frac{\log n}{n^3}\,+\,\frac{1}{n}\frac{1}{1+n^2(t-s)}
 \end{equation*}
 by  \eqref{eq1d3}.
 These estimates above give the desired bound of $B_t^2$.
  The sum in the first line at the right hand side of \eqref{mfBt} is bounded by
 \begin{equation*}
 	 \frac{(\log^2 n) \log\log n }{n^3\sqrt{1+n^2(t-s)}}\,+\, \Big(\frac{1}{1+n^2(t-s)}\Big)^{\frac{3}{2}}\frac{\log n}{n^2}.
 \end{equation*}
 The integral term is bounded  by 
 \begin{equation*}
 	\int_s^t \frac{1}{1+n^2(t-r)}\frac{1}{1+\log(1+n^2(t-r))} n\mf B_r^1dr
 \end{equation*}
 which, by  \eqref{trlogrs}, is less than or equal to 
 \begin{equation*}
 \frac{C}{\sqrt{1+n^2(t-s)}}\frac{(\log^2 n)\,\log\log n}{n^3}\,+\,\frac{C}{1+n^2(t-s)}\frac{1}{n(1+\log(1+n^2(t-s)))}.
 \end{equation*}
 Comparing the four terms from the bounds for the initial term and the integral term, we find the dominating term and get 
 \begin{equation*}
 	\mf B_t^2\,\lesssim\, \frac{1}{1+n^2(t-s)}\frac{1}{n(1+\log(1+n^2(t-s)))}.
 \end{equation*}

 For $k=3$, the sum in the first line at the right hand side of \eqref{Bt} is bounded  by
 \begin{equation*}
 	C\frac{\log^2 n}{n^4}\,+\,C\frac{\log n}{n^2}\frac{1}{1+n^2(t-s)}.
 \end{equation*}
 The integral term is bounded  by 
 \begin{equation*}
 	C\int_s^t \frac{1}{1+n^2(t-r)} (n\mf B_r^2+B_r^1)dr
 \end{equation*}
 which, by  \eqref{trlogrs}, \eqref{eqd2} and \eqref{eq1d3}, is less than or equal to 
 \begin{equation*}
 	\begin{split}
 		&\frac{1}{1+n^2(t-s)}\frac{\log\log n}{n^2}\,+\,\frac{(\log n)\log(1+n^2(t-s))}{n^4}\,+\,\frac{\log(1+n^2(t-s))}{n^2(1+n^2(t-s))}\\
 		\lesssim\,&\frac{\log^2 n}{n^4}\,+\,\frac{1}{1+n^2(t-s)}\frac{\log n}{n^2}.
 	\end{split}
 \end{equation*}
 Combining all the estimates above, we finish the proof.
\end{proof}

The following lemma was used to prove \eqref{eqn 1}.
\begin{lemma}[two times with two points at lower time]\label{twotimestwopt}
	Fix $0< s<t\leq T$ and two distinct points $z_1,z_2\in\bb Z^d$. Assume $x_1,x_2$ are distinct. Then 
	\begin{equation*}
		\begin{split}
			|\phi(s,s,t;z_1,z_2,x_1)|\,	\lesssim\,
			\begin{cases}
			\frac{(\log n)\log\log n}{n^3}\,+\,\frac{\log n}{n^2}\frac{1}{n^2(t-s)+1} \quad &\text{if}\;\; d=2\\
				\frac{1}{n^3}\,+\, \frac{1}{n^2}\Big(\frac{1}{n^2(t-s)+1}\Big)^{\frac{d}{2}} \quad &\text{if}\;\; d\geq 3,
			\end{cases}
		\end{split}
	\end{equation*}
	and
	\begin{equation*}
		\begin{split}
			|\phi(s,s,t,t;z_1,z_2,x_1,x_2)|\,\lesssim\,
			\begin{cases}
				\frac{(\log^2 n)\log\log n}{n^4}\,+\,\frac{\log n}{n^2}\frac{1}{n^2(t-s)+1}\,+\,\Big(\frac{1}{n^2(t-s)+1}\Big)^{2} \quad &\text{if}\;\; d=2\\
					\frac{1}{n^4}\,+\,\frac{1}{n^2}\Big(\frac{1}{n^2(t-s)+1}\Big)^{\frac{d}{2}}\,+\,\Big(\frac{1}{n^2(t-s)+1}\Big)^{d} \quad &\text{if}\;\; d\geq 3.
			\end{cases}
		\end{split}
	\end{equation*}
\end{lemma}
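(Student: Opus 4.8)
The plan is to run the same recursive/Duhamel scheme as in Lemma~\ref{twotimes}, with the single frozen site there replaced by the two sites $z_1,z_2$ frozen at the lower time $s$. Write
\[
D_t^k \,:=\, \sup_{x_1,\dots,x_k\ \text{distinct}}\big|\phi(s,s,t,\dots,t;z_1,z_2,x_1,\dots,x_k)\big|,
\]
so that $D_t^0\le A_s^2$ and only $D_t^1,D_t^2$ are needed. Evolving the upper particles $x_1,\dots,x_k$ backwards from $t$ to $s$ by the stirring dynamics (the sites $z_1,z_2$ being inert) and arguing exactly as in the derivation of \eqref{ind2}--\eqref{indtwo}, one gets, for $d\ge 3$,
\[
D_t^k \,\lesssim\, \Xi_{t-s}^k \;+\; \one_{\{k\ge 2\}}\int_s^t\Big(\tfrac{1}{1+n^2(t-r)}\Big)^{d/2}\big(nD_r^{k-1}+D_r^{k-2}\big)\,dr,
\]
and for $d=2$ the same with the kernel $(1+n^2(t-r))^{-1}$ and with $nD_r^{k-1}$ replaced by $n\mathfrak D_r^{k-1}$ (the gradient quantity, as in \eqref{Bt}); here $\Xi_{t-s}^k$ bounds the initial term $\sum_{\mathbf y\in\Lambda_k}p_{t-s}(\mathbf x,\mathbf y)\,\phi(s,\dots,s;z_1,z_2,\mathbf y)$.

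The one genuinely new point is the estimate of $\Xi_{t-s}^k$, where one splits according to how many of the points $y_i$ land on $\{z_1,z_2\}$. If none do, the correlation is a $(k+2)$-point equal-time correlation at distinct points, hence $\le A_s^{k+2}$, with total weight $\le 1$. If exactly one does, say $y_{i_0}=z_1$, the identity $\overline\eta(x)^2=(1-2\rho^n(x))\overline\eta(x)+\chi(\rho^n(x))$ together with $\bb E^n_{\nu^n_{\rho_0(\cdot)}}[\overline\eta_s(\cdot)]=0$ reduces it to a combination of $(k+1)$- and $k$-point correlations, bounded by $A_s^{k+1}+A_s^k$, while the weight $\sum_{\mathbf y:\,y_{i_0}=z_1}p_{t-s}(\mathbf x,\mathbf y)\le\mathbf{P}_{\mathbf x}[\exists\, i:\mathbf{X}^i_{t-s}=z_1]\lesssim(1+n^2(t-s))^{-d/2}$ by Lemma~\ref{fixy}. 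If two do, applying the identity at both sites and discarding the linear-in-$\overline\eta$ terms (zero in expectation) leaves a combination of $k$-, $(k-1)$- and $(k-2)$-point correlations, the last with a bounded prefactor $\chi(\rho^n_s(z_1))\chi(\rho^n_s(z_2))$, hence $\lesssim A_s^k+A_s^{k-1}+A_s^{k-2}$; the weight is now $\lesssim\mathbf{P}_{\mathbf x}[\exists\, i\ne j:\mathbf{X}^i_{t-s}=z_1,\ \mathbf{X}^j_{t-s}=z_2]\lesssim(1+n^2(t-s))^{-d}$. Altogether
\[
\Xi_{t-s}^k\,\lesssim\, A_s^{k+2}+\Big(\tfrac{1}{1+n^2(t-s)}\Big)^{d/2}(A_s^{k+1}+A_s^k)+\Big(\tfrac{1}{1+n^2(t-s)}\Big)^{d}(A_s^k+A_s^{k-1}+A_s^{k-2}),
\]
and the last term is precisely the source of the $(1+n^2(t-s))^{-d}$ (resp.\ $(n^2(t-s)+1)^{-2}$ for $d=2$) contribution absent from Lemma~\ref{twotimes}. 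The two-target hitting estimate used here for the stirring process can be obtained by conditioning on the trajectory of one labelled walk and comparing the conditional law of the other (a walk avoiding a moving obstacle) with a free walk, in the spirit of Lemma~\ref{fixy}.

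With these relations in hand the stated bounds follow by the same bookkeeping as in Lemma~\ref{twotimes}. For $k=1$ there is no integral term, and $D_t^1$ is read off from $\Xi_{t-s}^1$ using $A_s^1=0$ and the one-time bounds of Lemmas~\ref{onetime} and \ref{onetime2d}. For $k=2$ in $d\ge 3$ one plugs $A_s^j\lesssim n^{-j}$ and $nD_r^1$, $D_r^0$ into the integral and closes it with the convolution-type estimates \eqref{eqd3}, \eqref{eq3d3}. In $d=2$ one first bounds $\mathfrak D_t^1$ directly: being a one-upper-particle object it equals $\sum_z\big(q_{t-s}(x_1,z)-q_{t-s}(x_2,z)\big)\phi(s,s,s;z_1,z_2,z)$, which one estimates by the same $z\notin\{z_1,z_2\}$ versus $z\in\{z_1,z_2\}$ split using the kernel-gradient bounds (Lemmas~\ref{difp} and \ref{difpfixz}), giving $\mathfrak D_t^1\lesssim\frac{\log(1+n^2(t-s))}{\sqrt{1+n^2(t-s)}}A_s^3+(1+n^2(t-s))^{-3/2}A_s^2$; one then substitutes $D_r^1$, $\mathfrak D_r^1$ and $D_r^0=A_s^2$ into the integral and evaluates it using the time-integral estimates already collected, together with $\int_s^t\frac{\log(1+n^2(r-s))}{(1+n^2(t-r))\sqrt{1+n^2(r-s)}}\,dr\lesssim n^{-2}$, which holds because $L\mapsto\log^2 L/\sqrt L$ is bounded on $(0,\infty)$ and the integrand is $O(t-s)$ for small $t-s$. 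The main obstacle is precisely this $d=2$ computation: one must track the accumulation of $\log n$ and $\log\log n$ factors carefully --- in particular through $A_s^3\lesssim n^{-3}\log n\log\log n$ and $A_s^4\lesssim n^{-4}\log^2 n$ --- to confirm that the leading term stabilizes at $n^{-4}\log^2 n\log\log n$ and that every other intermediate term is dominated by one of the three displayed in the statement.
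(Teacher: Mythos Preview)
Your proposal is correct and follows the same Duhamel/recursion scheme as the paper, with the same case-splitting of the initial term according to how many of the landed points coincide with $\{z_1,z_2\}$. Two differences are worth noting.

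First, for the two-coincidence weight you sketch a conditioning-on-one-trajectory argument; in the paper this is simply Lemma~\ref{fixy} applied with $|I|=2$, which already delivers the bound $(1+n^2t)^{-d}$ directly for the stirring process via the pointwise comparison $p_t(\mathbf x,\mathbf y)\lesssim\prod_i\bar p_t(x_i,y_i)$. No separate argument is needed.

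Second, and more substantively, for $d=2$ you introduce the gradient quantity $\mathfrak D^{k-1}$ and follow the refined route of Lemma~\ref{twotimes}. The paper instead runs the \emph{crude} recursion $(nQ_r^{k-1}+Q_r^{k-2})$ uniformly for all $d\ge 2$; the Remark immediately after the proof says explicitly that the sharper $d=2$ strategy was not pursued because the crude bound already suffices for \eqref{eqn 1}. Your route works (and would in fact yield $n^{-4}\log^2 n$ rather than $n^{-4}\log^2 n\log\log n$ for the first term when $k=2$, so your assertion that the leading term ``stabilizes at $n^{-4}\log^2 n\log\log n$'' is a slight overstatement of what your own method gives), but it is unnecessary extra work: the $\log\log n$ in the stated bound arises precisely because the paper integrates the simpler estimate $nQ_r^1\lesssim n^{-2}(\log n)\log\log n$ against $(1+n^2(t-r))^{-1}$.
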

\begin{proof}
	Denote 
\begin{equation*}
	Q_t^k\,=\,\sup_{x_i's\; \text{are distinct}} 	|\phi(s,s,t,\cdots,t;z_1,z_2,x_1,\cdots,x_k)|.
\end{equation*}
Then obviously $Q_t^0=A_s^2$. 

For every $d\geq 2$, we can obtain an inequality similar to \eqref{ind2}: 
\begin{equation}\label{ind3t2}
	\begin{split}
		&|\phi(s,s,t,\cdots,t;z_1,z_2,x_1,\cdots,x_k)|\\
		\lesssim\, & \sum_{(y_i:1\leq i\leq k)\in\Lambda_{k}}p_{t-s}((x_i:1\leq i\leq k),(y_i:1\leq i\leq k))\,	|\phi(s,\cdots,s;z_1,z_2,y_1,\cdots,y_k)|\\ +\,&\one_{\{k\geq 2\}}\int_s^t\Big(\frac{1}{1+n^2(t-r)}\Big)^{\frac{d}{2}} (nQ_r^{k-1}+Q_r^{k-2})dr.
	\end{split}
\end{equation}
The initial term now should be estimated by considering whether some $y_i$  is equal to $z_1,z_2$ or not. For points $\mathbf y$ such that $y_i\neq z_1,z_2$ for all $i$,  the total contribution is bounded simply by $A_s^{k+2}$. If there exists one and only one $i$ such that $y_i$ equal to one of $z_1,z_2$, then by Lemma \ref{fixy} below with $|I|=1$ and \eqref{reduce}, the total contribution is bounded by
\begin{equation*}
	C\Big(\frac{1}{n^2(t-s)+1}\Big)^{d/2} (A_s^{k+1}+A_s^{k}).
\end{equation*}
For points such that there are two coordinates equal to $z_1,z_2$ respectively,  we use Lemma \ref{fixy} below with $|I|=2$ and \eqref{reduce1} to estimate the corresponding total contribution by
\begin{equation*}
		C\Big(\frac{1}{n^2(t-s)+1}\Big)^{d} (A_s^{k}+A_s^{k-1}+A_s^{k-2}).
	\end{equation*}
Combining these estimates together, we have 
\begin{equation}\label{indtwoQ}
	\begin{split}
		Q_t^k\lesssim &\, A_s^{k+2}\,+\,\Big(\frac{1}{n^2(t-s)+1}\Big)^{\frac{d}{2}} (A_s^{k+1}+A_s^{k})\\
		+\,& \one_{\{k\geq 2\}}\Big(\frac{1}{n^2(t-s)+1}\Big)^{d} (A_s^{k-1}+A_s^{k-2})\\ +\,&\one_{\{k\geq 2\}}\int_s^t\Big(\frac{1}{1+n^2(t-r)}\Big)^{\frac{d}{2}} (nQ_r^{k-1}+Q_r^{k-2})dr.
	\end{split}
\end{equation}

We now use this recursion relation to estimate $Q_t^1$ and $Q_t^2$. From \eqref{indtwoQ},
\begin{equation*}
	\begin{split}
		Q_t^1\,\lesssim\,&A_s^3\,+\,\Big(\frac{1}{n^2(t-s)+1}\Big)^{\frac{d}{2}} (A_s^2+A_s^1)
	\end{split}
\end{equation*}
which can be further bounded by
\begin{equation*}
	\frac{(\log n)\log\log n}{n^3}\,+\,\frac{1}{n^2(t-s)+1} \frac{\log n}{n^2}
\end{equation*}
if $d=2$
and
\begin{equation*}
\frac{1}{n^3}\,+\,\Big(\frac{1}{n^2(t-s)+1}\Big)^{\frac{d}{2}} \frac{1}{n^2}
\end{equation*}
if $d\geq 3$. 
 Moreover, recalling that $A_s^1=0$ and $A_s^0=1$, we have
\begin{equation*}
	\begin{split}
		Q_t^2\lesssim &\, A_s^4\,+\,\Big(\frac{1}{n^2(t-s)+1}\Big)^{\frac{d}{2}} (A_s^3+A_s^2)\,+\,\Big(\frac{1}{n^2(t-s)+1}\Big)^{d} \\ +\,&\int_s^t\Big(\frac{1}{1+n^2(t-r)}\Big)^{\frac{d}{2}} (nQ_r^1+A_s^2)dr,
	\end{split}
\end{equation*}
which is less than or equal to 
\begin{equation*}
	\frac{(\log^2 n)\log\log n}{n^4}\,+\,\frac{\log n}{n^2}\frac{1}{n^2(t-s)+1}\,+\,\Big(\frac{1}{n^2(t-s)+1}\Big)^{2}, \quad \text{if}\,\, d=2
\end{equation*}
using \eqref{eqd2} and \eqref{eq1d3}
and
\begin{equation*}
	\frac{1}{n^4}\,+\,\frac{1}{n^2}\Big(\frac{1}{n^2(t-s)+1}\Big)^{\frac{d}{2}}\,+\,\Big(\frac{1}{n^2(t-s)+1}\Big)^{d}, \quad \text{if}\,\, d\geq 3
\end{equation*}
 using \eqref{eqd3} and \eqref{eq3d3}.
\end{proof}
\begin{remark}
The bounds in the lemma for $d=2$ can be slightly improved by applying the strategy used in Lemma \ref{twotimes} for the two-dimensional estimates. However, since these bounds already suffice to verify \eqref{convergence 1}, we did not pursue sharper estimates.
\end{remark}

\subsection{Estimates on the transition probability of stirring process}
In this subsection, we provide estimates used in previous two subsections. Recall that $p_t$ is the transition probability of the (accelerated) stirring process with labeled particles.

\begin{lemma}\label{fixy}
	Fix $\mathbf x\in\Lambda_k$ and a non-empty subset $I\subset \{1,\cdots,k\}$ and a list of  distinct points $\{z_j,j\in I\}$ in $\bb Z^d$ with $d\geq 1$. Then there exists a constant $C$ independent of $\mathbf x$, $z_j$'s and $I$ such that 
	\begin{equation*}
		\sum_{\substack{\mathbf y\in\Lambda_{k}\\y_j=z_j,\forall j\in I}} p_t(\mathbf x,\mathbf y)\,\leq\, C\Big(\frac{1}{1+n^2t}\Big)^{\frac{d}{2}|I|}.
	\end{equation*}
\end{lemma}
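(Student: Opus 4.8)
The plan is to reduce the estimate to the well-understood behaviour of a \emph{single} labelled particle in the stirring process by exploiting the fact that the stirring dynamics, when one projects onto the positions of a subset of the particles, is again a stirring-type process, and in particular each individual coordinate $\mathbf X^j_t$ evolves as a continuous-time random walk on $\bb Z^d$ accelerated by $n^2$ whose transition probability obeys the local CLT bound \eqref{est 1}, i.e.\ $\sup_z \mathbf P_{x}[\mathbf X^j_t = z] \lesssim (1+n^2 t)^{-d/2}$. First I would observe that the sum on the left-hand side is precisely $\mathbf P_{\mathbf x}\big[\mathbf X^j_t = z_j \text{ for all } j\in I\big]$, where $(\mathbf X^1_t,\dots,\mathbf X^k_t)$ is the accelerated labelled stirring process started from $\mathbf x$; here the restriction to $\Lambda_k$ (distinct target points) is automatic because the stirring process preserves distinctness of the labelled particles.

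The core step is then to prove the bound
\[
\mathbf P_{\mathbf x}\big[\mathbf X^j_t = z_j \text{ for all } j\in I\big] \;\le\; C \Big(\frac{1}{1+n^2 t}\Big)^{\frac d2 |I|}
\]
by induction on $|I|$, peeling off one coordinate at a time. Write $I = \{j_0\}\cup I'$. The key fact I would use is that the marginal process $\mathbf X^{j_0}$ is \emph{exactly} a single rate-$2dn^2$ random walk (each coordinate of the stirring process, ignoring the others, is a simple random walk accelerated by $n^2$), so conditioning on the entire trajectory of $\mathbf X^{j_0}$ and on the realisation of all the Poisson clocks not involving $\mathbf X^{j_0}$, the remaining $k-1$ particles still perform a stirring process (now on the ``time-dependent graph'' obtained by deleting the site occupied by $\mathbf X^{j_0}$); applying the induction hypothesis for $I'$ inside this conditional law — which only uses the single-particle local CLT estimate \eqref{est 1}, valid uniformly — gives a conditional probability bounded by $C(1+n^2 t)^{-\frac d2|I'|}$, and then integrating out over $\{\mathbf X^{j_0}_t = z_{j_0}\}$, whose probability is $\lesssim (1+n^2 t)^{-d/2}$ by \eqref{est 1}, closes the induction. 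The base case $|I|=1$ is \eqref{est 1} itself.

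The subtle point, and what I expect to be the main obstacle, is making the conditioning argument rigorous: after revealing the trajectory of $\mathbf X^{j_0}$, the law of the remaining particles is a stirring process on an environment with one site forbidden at each instant, and one must argue that each of \emph{its} marginal coordinates is still dominated (in the sense of one-dimensional marginals of the relevant event) by a genuine random walk satisfying \eqref{est 1}, uniformly over the frozen trajectory. One clean way to handle this is the stirring/interchange representation: the stirring process is a deterministic function of a Poisson process of edge-swaps, and removing the event $\{\mathbf X^{j_0}_t=z_{j_0}\}$ can be arranged via a union bound over which labelled particle lands at $z_{j_0}$ together with the observation that the trajectory of any fixed label, marginally, is a simple random walk independent of that choice; iterating gives the product bound with no loss beyond the constant $C$, which is permitted to depend on $k$ (equivalently on the fixed parameters, since $k$ is fixed throughout the correlation estimates). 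I would write this up inductively, invoking \eqref{est 1} at each of the $|I|$ steps and absorbing the combinatorial factors from the union bounds into $C$.
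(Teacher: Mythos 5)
Your reduction of the left-hand side to $\mathbf P_{\mathbf x}\big[\mathbf X^j_t = z_j,\ \forall j\in I\big]$ and the base case $|I|=1$ (the marginal of one labelled stirring particle is a simple random walk, so the local CLT bound \eqref{est 1} applies) are both correct. The gap is in the inductive step. After you condition on the full trajectory of $\mathbf X^{j_0}$, the remaining $k-1$ particles are \emph{not} a stirring process whose individual marginals are simple random walks: each time the frozen trajectory jumps from $a$ to $b$ it forcibly displaces whichever particle sits at $b$, so the others become random walks in a space--time environment determined by, and correlated with, the conditioning event $\{\mathbf X^{j_0}_t=z_{j_0}\}$. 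The statement you would need --- that the conditional marginals of the remaining particles still satisfy \eqref{est 1} uniformly over the frozen trajectory --- is precisely the nontrivial correlation estimate for labelled exclusion particles, and the ``union bound over which label lands at $z_{j_0}$'' does not supply it: the events $\{\mathbf X^{j_0}_t=z_{j_0}\}$ and $\{\mathbf X^{j_1}_t=z_{j_1}\}$ are not independent, and a union bound only yields a sum of single-particle probabilities, i.e.\ the factor $(1+n^2t)^{-d/2}$ to the first power rather than to the power $|I|$. In short, you have assumed the decoupling that is the actual content of the lemma.

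The paper sidesteps this entirely by invoking a pointwise product bound on the $k$-particle kernel, $p_t(\mathbf x,\mathbf y)\lesssim \prod_{i=1}^k\bar p_t(x_i,y_i)$ with $\bar p_t$ satisfying \eqref{bar1} and \eqref{bar2} (Lemmas 3.2 and 3.5 of \cite{erhard2024scaling}); the lemma then follows by bounding each factor with $j\in I$ by $(1+n^2t)^{-d/2}$ and summing out the remaining coordinates. To make your approach work you would need to first establish such a product (or negative-dependence) bound for the labelled stirring kernel --- for instance via Liggett-type correlation inequalities for the symmetric exclusion process or the couplings of \cite{erhard2024scaling} --- rather than the trajectory-conditioning you describe.
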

\begin{proof}
	It was proved in Lemma 3.2 and Lemma 3.5 of \cite{erhard2024scaling} that 
	\begin{equation*}
		p_t(\mathbf x,\mathbf y)\,\lesssim\,\prod_{i=1}^k \bar p_t(x_i,y_i)
	\end{equation*}
	where $\bar p_t(x,y)$ satisfies 
	\begin{equation}\label{bar1}
		\bar p_t(x,y)\,\lesssim\,\Big(\frac{1}{1+n^2t+|x-y|^2}\Big)^{\frac{d}{2}},
	\end{equation}
	\begin{equation}\label{bar2}
		\sum_{y\in\bb Z^d} \bar p_t(x,y)\,\lesssim\,1\quad \text{and}\quad 	\sum_{x\in\bb Z^d} \bar p_t(x,y)\,\lesssim\,1.
	\end{equation}
	We emphasize that in the above formulas,  $a\lesssim b$ means that there exists a constant $C$ independent of $\mathbf x,\mathbf y,x,y$ and $t$ such that $a\leq Cb$. As a consequence,  
	\begin{equation*}
	\sum_{\substack{\mathbf y\in\Lambda_{k}\\y_j=z_j,\forall j\in I}} p_t(\mathbf x,\mathbf y) \,\lesssim\, \prod_{j\in I}\bar p_t(x_j,z_j)\,\lesssim\,\Big(\frac{1}{1+n^2t}\Big)^{\frac{d}{2}|I|}.
	\end{equation*}
\end{proof}

The next lemma was used in the proof of Lemma \ref{onetime}. Recall that $\mathbf X_t\in\Lambda_{k}$ is the vector of positions of labeled particles of stirring process.  
\begin{lemma}\label{coupling}
	Assume $d\geq 1$. Then for any $t>0$,  
	\begin{equation*}
		\sum_{i,j=1}^{k} \mathbf{P}_{(x_i\,:\, 1\leq i\leq k)}[|\mathbf{X}_{t}^j- \mathbf{X}_{t}^i|=1]\,\lesssim\,\Big(\frac{1}{1+n^2t}\Big)^{\frac{d}{2}}.
	\end{equation*}
\end{lemma}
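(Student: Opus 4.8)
The plan is to reduce the sum over pairs to a sum over single particles by using the stirring duality: under the stirring (symmetric exclusion with labels) dynamics, the position of any fixed tagged particle $\mathbf X^i_t$ is distributed as a single continuous-time random walk on $\bb Z^d$ accelerated by $n^2$, i.e.\ its law is $q_t(x_i,\cdot)$. This is a standard consequence of the fact that, marginally, one stirring particle ignores the others. Consequently, for each pair $(i,j)$ we can bound
\[
\mathbf P_{(x_i:1\leq i\leq k)}\big[|\mathbf X^j_t-\mathbf X^i_t|=1\big]\;\leq\;\sum_{z\in\bb Z^d}\sum_{|e|=1}\mathbf P[\mathbf X^i_t=z,\ \mathbf X^j_t=z+e].
\]
The difficulty is that $\mathbf X^i$ and $\mathbf X^j$ are \emph{not} independent under the stirring dynamics, so I cannot simply factorize the joint law into $q_t(x_i,z)q_t(x_j,z+e)$.

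To handle the correlation, I would invoke the pointwise upper bound on the labeled stirring transition kernel already recorded in the proof of Lemma \ref{fixy}: by Lemma 3.2 and Lemma 3.5 of \cite{erhard2024scaling}, $p_t(\mathbf x,\mathbf y)\lesssim \prod_{i=1}^k\bar p_t(x_i,y_i)$ with $\bar p_t$ satisfying \eqref{bar1} and \eqref{bar2}. Specializing to the two coordinates $i,j$ and summing out all the others using the second bound in \eqref{bar2}, the joint law of $(\mathbf X^i_t,\mathbf X^j_t)$ is bounded by $C\,\bar p_t(x_i,z)\,\bar p_t(x_j,w)$. Therefore
\[
\mathbf P_{\mathbf x}\big[|\mathbf X^j_t-\mathbf X^i_t|=1\big]\;\lesssim\;\sum_{z\in\bb Z^d}\sum_{|e|=1}\bar p_t(x_i,z)\,\bar p_t(x_j,z+e)\;\lesssim\;\Big(\frac{1}{1+n^2t}\Big)^{\frac d2}\sum_{z}\sum_{|e|=1}\bar p_t(x_i,z),
\]
where in the last step I used \eqref{bar1} to extract one factor $\bar p_t(x_j,z+e)\lesssim(1+n^2t)^{-d/2}$ uniformly, and the remaining sum over $z$ (and over the $2d$ unit vectors $e$) of $\bar p_t(x_i,z)$ is $\lesssim 1$ by \eqref{bar2}. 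This gives the bound $(1+n^2t)^{-d/2}$ for a single pair.

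Finally, the total number of ordered pairs $(i,j)$ with $1\leq i,j\leq k$ is $k^2$, a constant independent of $n$ and $t$ (since $k$ is fixed throughout the correlation estimates), so summing the per-pair bound over all pairs only changes the constant:
\[
\sum_{i,j=1}^k \mathbf P_{(x_i:1\leq i\leq k)}\big[|\mathbf X^j_t-\mathbf X^i_t|=1\big]\;\lesssim\; k^2\Big(\frac{1}{1+n^2t}\Big)^{\frac d2}\;\lesssim\;\Big(\frac{1}{1+n^2t}\Big)^{\frac d2}.
\]
The main obstacle is the lack of independence between tagged particles; everything else is bookkeeping once the Gaussian-type kernel bound from \cite{erhard2024scaling} is in hand. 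One subtlety to check is that when $i=j$ the event $|\mathbf X^i_t-\mathbf X^i_t|=1$ is empty, so those diagonal terms contribute zero and can be discarded — this is harmless and does not affect the stated bound.
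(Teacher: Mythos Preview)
Your proof is correct and follows essentially the same approach as the paper: both use the product bound $p_t(\mathbf x,\mathbf y)\lesssim\prod_i\bar p_t(x_i,y_i)$ from \cite{erhard2024scaling}, extract one factor via the pointwise estimate \eqref{bar1}, and sum the remaining factors via \eqref{bar2}. The only cosmetic difference is that you first marginalize out the $k-2$ uninvolved coordinates and then handle the pair, whereas the paper keeps all $k$ coordinates and takes the supremum on one of them directly; the substance is identical.
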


\begin{proof}
	Since $k$ is finite and each site can hold at most one stirring particle, it is enough to prove that
	\begin{equation}\label{Pmin}
		\mathbf{P}_{(x_i\,:\, 1\leq i\leq k)}\big[|\mathbf{X}_{t}^j- \mathbf{X}_{t}^i|=1\big]\,\lesssim\,\Big(\frac{1}{1+n^2t}\Big)^{\frac{d}{2}},
	\end{equation}
	for any $i,j$. Without loss of generality, we shall assume $i=1,j=2$.
	
	In view of Lemma 3.2 and Lemma 3.5 of \cite{erhard2024scaling},
	\begin{equation*}
		\begin{split}
		\mathbf{P}_{(x_i\,:\, 1\leq i\leq k)}\big[|\mathbf{X}_{t}^1- \mathbf{X}_{t}^2|=1\big]&\,=\,\sum_{\substack{\mathbf y\in\Lambda_k\\|y_1-y_2|=1}}p_t(\mathbf x,\mathbf y)\,\lesssim\,\sum_{\substack{\mathbf y\in\Lambda_k\\|y_1-y_2|=1}}\prod_{i=1}^k \bar p_t(x_i,y_i)\\
		\leq\, &\sum_{\substack{\mathbf y\in\Lambda_k\\|y_1-y_2|=1}}\sup_{z}\bar p_t(x_1,z)\prod_{i=2}^k \bar p_t(x_i,y_i)\,\lesssim\Big(\frac{1}{1+n^2t}\Big)^{\frac{d}{2}},
		\end{split}
	\end{equation*}
	where in the last inequality we used \eqref{bar1} and \eqref{bar2} specifically.

\end{proof}

\begin{lemma}\label{difp}
There exists a constant universal $C=C(k)$ such that 
\begin{equation*}
	\sum_{\mathbf y\in\Lambda_{k}}\big\lvert p_t(\mathbf x,\mathbf y)\,-\,p_t(\mathbf x+\mathbf e_{11},\mathbf y) \big\rvert\,\leq\,	\begin{cases}
		\frac{C\log(1+n^2t)}{\sqrt{1+n^2t}} \quad &\text{if}\;\; d=2\\
		\frac{C}{\sqrt{1+n^2t}} \quad &\text{if}\;\; d\geq 3,
	\end{cases}
\end{equation*}
valid for all $\mathbf x, \mathbf x+\mathbf e_{11}\in\Lambda_k$ and $t\geq 0$. In the above formula,
\begin{equation*}
	\mathbf e_{11}\,=\,((1,0,\cdots,0),(0,\cdots,0),\cdots,(0,\cdots,0))\in (\bb Z^d)^k.
\end{equation*}
\end{lemma}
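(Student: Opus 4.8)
The plan is to prove this by a coupling argument. For any coupling $(\mathbf X_t,\mathbf X'_t)$ of the two accelerated labeled stirring processes started from $\mathbf x$ and $\mathbf x+\mathbf e_{11}$, writing $\tau=\inf\{s\geq 0:\mathbf X_s=\mathbf X'_s\}$ for the coupling time, one has the standard bound
\[\sum_{\mathbf y\in\Lambda_k}\big\lvert p_t(\mathbf x,\mathbf y)-p_t(\mathbf x+\mathbf e_{11},\mathbf y)\big\rvert\;\leq\;2\,\mathbf P(\tau>t)\,,\]
so it suffices to build a coupling whose coupling time has the stated tails. I would first note that the naive basic coupling (common Poisson clocks for both copies) is useless here: under it the two configurations differ forever in exactly one particle, and the positions of that discrepancy particle in the two copies evolve as two independent accelerated simple random walks on $\mathbb Z^d$ that are forced to exchange — never to merge — whenever they become adjacent. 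In particular one cannot hope to couple simply by waiting for those two walks to meet, since two independent walks in $\mathbb Z^2$ meet only after a time with tail $\asymp 1/\log(1+n^2t)$, far too slow for what is claimed.

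To do better I would couple the discrepancy through a single direction. The idea is to use common clocks for the transverse directions $e_2,\dots,e_d$, so that along the $e_1$-axis the two copies of the discrepancy particle have first coordinates $A_t$ and $A'_t$ evolving as essentially independent one-dimensional accelerated walks, which one lets coalesce the first time they meet; since a one-dimensional walk started at distance $1$ from a point has not returned to it by time $t$ with probability $\lesssim(1+n^2t)^{-1/2}$, this would give $\mathbf P(\tau>t)\lesssim(1+n^2t)^{-1/2}$, which is exactly the bound asserted for $d\geq3$ and is dimension-free. The delicate point, and the genuine source of the extra logarithm when $d=2$, is that the discrepancy particle interacts with the other $k-1$ particles: when it exchanges position with a common particle, one of the two mechanisms above must be sacrificed — either the transverse coordinates of the discrepancy particle in the two copies desynchronize (which, left alone, degrades the $d=2$ bound all the way down to the useless $1/\log(1+n^2t)$), or a common particle desynchronizes, creating a fresh single-particle discrepancy. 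In either case one relabels and restarts the one-dimensional coupling; by Lemma \ref{coupling} the instantaneous rate of such exchanges at time $r$ is $\lesssim n^2(1+n^2r)^{-d/2}$, so the number $N$ of them up to time $t$ has mean $\lesssim\int_0^t n^2(1+n^2r)^{-d/2}\,dr$, which is $O(1)$ for $d\geq3$ but $\asymp\log(1+n^2t)$ for $d=2$.

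It then remains to combine these ingredients. In $d\geq3$, $N$ has constant mean and exponential tail (again via Lemma \ref{coupling}), so $\tau$ is a sum of boundedly many one-dimensional return times and $\mathbf P(\tau>t)\lesssim(1+n^2t)^{-1/2}$. In $d=2$ I would split $[0,t]$ at $t/2$, control $\mathbf P\big(N>C\log(1+n^2t)\big)$ by the concentration of $N$, and bound the probability that a sum of $O(\log(1+n^2t))$ one-dimensional return times exceeds $t/2$ by the $(1+n^2s)^{-1/2}$-tails; this yields a bound of the form $\mathrm{(polylog)}(1+n^2t)/\sqrt{1+n^2t}$, and a more careful bookkeeping of the return-time sum (using that most one-dimensional returns are fast) should recover exactly the single logarithm $\log(1+n^2t)/\sqrt{1+n^2t}$. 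The hard part will be precisely this last stage: turning the informal ``exchange, relabel, restart'' scheme into a bona fide measure-preserving Markovian coupling and performing the accounting sharply enough to obtain one logarithmic factor rather than a power of it when $d=2$. An alternative route that sidesteps the explicit coupling would be to expand $p_t(\mathbf x,\cdot)-p_t(\mathbf x+\mathbf e_{11},\cdot)$ by Duhamel's formula around the semigroup of $k$ independent accelerated walks — whose $e_1$-gradient is $\lesssim(1+n^2t)^{-1/2}$ with no logarithm — and to estimate the resulting ``collision'' corrections via Lemma \ref{coupling} together with the transition-probability bounds \eqref{bar1}--\eqref{bar2} of \cite{erhard2024scaling}.
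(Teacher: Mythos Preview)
The paper does not prove this lemma; it simply records that the statement is exactly what is established in \cite{erhard2025scaling} in the course of proving their equation~(3.2), using their Proposition~3.6 and Lemma~3.8. Unwinding those references (and comparing with the proof of Lemma~\ref{difpfixz} here, which imports the same machinery) shows that the route taken there is precisely the ``alternative'' you mention in your last sentence: one first proves the gradient bound $\sum_{\mathbf y}|p_t^{\rw}(\mathbf x,\mathbf y)-p_t^{\rw}(\mathbf x+\mathbf e_{11},\mathbf y)|\lesssim(1+n^2t)^{-1/2}$ for $k$ \emph{independent} accelerated walks (this is their Proposition~3.6, and it holds with no logarithm in any dimension), and then controls $p_t-p_t^{\rw}$ by a Duhamel expansion in which the ``collision'' operator $L_k-L_k^{\rw}$ is supported on configurations with two adjacent particles. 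Summing over $\mathbf y$ and using the neighbor probability estimate of Lemma~\ref{coupling} together with \eqref{bar1}--\eqref{bar2} turns the correction into a convolution integral of the type treated in Appendix~A; the single logarithm in $d=2$ drops out mechanically from $\int_0^t(1+n^2(t-s))^{-1}(1+n^2 s)^{-1/2}\,ds$, with no combinatorics.

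Your primary coupling scheme is a genuinely different and appealing idea, and your diagnosis of why the basic coupling fails and why interactions force restarts at rate $\lesssim n^2(1+n^2r)^{-d/2}$ is correct. But the proposal has a real gap at exactly the point you flag: you do not actually prove that the ``exchange, relabel, restart'' mechanism yields a \emph{single} logarithm in $d=2$. After an exchange the two discrepancy particles need not be at $e_1$-distance~$1$ (and may have acquired a transverse offset as well), so the next one-dimensional return time is not from distance~$1$; worse, the number of restarts and the return times are not independent, since the same encounters that trigger restarts also perturb the walk of the discrepancy. Turning your sketch into an honest bound of the form $\log(1+n^2t)/\sqrt{1+n^2t}$ (rather than $\log^c$ for some $c>1$) requires an argument you have not supplied. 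Since the Duhamel route gives the exact logarithm with essentially no effort beyond the appendix integrals, I would follow that and drop the coupling construction.
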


In fact, when proving equation (3.2) of  \cite{erhard2025scaling}, the statement of Lemma \ref{difp} is what actually was proved by using Proposition 3.6 and Lemma 3.8 there. So we omit the proof here.

\begin{lemma}\label{difpfixz}
	Fix $T>0$, $z\in\bb Z^d$ and $1\leq i\leq k$. There exists a universal  constant $C=C(k)$ independent of $z$ and $i$ such that, for all $t\leq T$ and $\mathbf x, \mathbf x+\mathbf e_{11}\in\Lambda_k$,
	\begin{equation*}
		\sum_{\mathbf y\in\Lambda_{k},y_i=z}\big\lvert p_t(\mathbf x,\mathbf y)\,-\,p_t(\mathbf x+\mathbf e_{11},\mathbf y) \big\rvert\,\leq\,	C\Big(\frac{1}{1+n^2t}\Big)^{\frac{d+1}{2}}.
	\end{equation*}
\end{lemma}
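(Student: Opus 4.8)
The plan is to obtain the bound by gluing together the two ingredients already available: Lemma~\ref{difp}, which controls the full $\ell^1$ difference $\sum_{\mathbf y}\lvert p_t(\mathbf x,\mathbf y)-p_t(\mathbf x+\mathbf e_{11},\mathbf y)\rvert$, and Lemma~\ref{fixy} with $\lvert I\rvert=1$, which says that pinning one coordinate $y_i=z$ of the endpoint costs a uniform factor $(1+n^2t)^{-d/2}$. The bridge is a single application of the Chapman--Kolmogorov identity $p_t=p_{t/2}\circ p_{t/2}$ for the labeled stirring process, inserted at the midpoint $t/2$. (For $n^2t\lesssim 1$ the asserted inequality is trivial, so one may assume $n^2t$ large throughout, in which case $1+n^2(t/2)\asymp 1+n^2t$.)

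Concretely, writing $\mathbf x'=\mathbf x+\mathbf e_{11}$, Chapman--Kolmogorov and the triangle inequality give
\begin{equation*}
	\sum_{\substack{\mathbf y\in\Lambda_k\\ y_i=z}}\big\lvert p_t(\mathbf x,\mathbf y)-p_t(\mathbf x',\mathbf y)\big\rvert\;\leq\;\Big(\sum_{\mathbf w\in\Lambda_k}\big\lvert p_{t/2}(\mathbf x,\mathbf w)-p_{t/2}(\mathbf x',\mathbf w)\big\rvert\Big)\,\Big(\sup_{\mathbf w\in\Lambda_k}\sum_{\substack{\mathbf y\in\Lambda_k\\ y_i=z}}p_{t/2}(\mathbf w,\mathbf y)\Big).
\end{equation*}
By Lemma~\ref{fixy} with $I=\{i\}$ and $z_i=z$, the supremum is $\leq C(1+n^2t/2)^{-d/2}\lesssim(1+n^2t)^{-d/2}$, uniformly in $\mathbf w$, in $z$ and in $i$; feeding this into Lemma~\ref{difp} for the remaining sum yields, when $d\geq 3$,
\begin{equation*}
	\sum_{\substack{\mathbf y\in\Lambda_k\\ y_i=z}}\big\lvert p_t(\mathbf x,\mathbf y)-p_t(\mathbf x',\mathbf y)\big\rvert\;\lesssim\;\frac{1}{\sqrt{1+n^2t}}\cdot\frac{1}{(1+n^2t)^{d/2}}\;=\;\Big(\frac{1}{1+n^2t}\Big)^{\frac{d+1}{2}}.
\end{equation*}
Since the constants in Lemmas~\ref{difp} and~\ref{fixy} are independent of $\mathbf x$, of $z$ and of $i$, the resulting constant is as required, and this settles the case $d\geq 3$ completely.

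The remaining, and genuinely delicate, point is $d=2$: the same splitting only gives the claimed bound up to an extra factor $\log(1+n^2t)$, inherited from the logarithm in the two-dimensional case of Lemma~\ref{difp}. Removing this residual logarithm is the main obstacle, and the plan is to do so not by using Lemma~\ref{difp} as a black box but by rerunning the argument behind it — Proposition~3.6 and Lemma~3.8 of \cite{erhard2025scaling} — with the summation kept on the slice $\{\mathbf y:y_i=z\}$ from the outset. In two dimensions that logarithm comes from the short time window in which the discrepancy between the stirring evolutions started from $\mathbf x$ and from $\mathbf x+\mathbf e_{11}$ has not yet resolved — the analogue of the slow meeting time in \eqref{meeting}; the key will be that pinning the endpoint at $y_i=z$ enforces an additional $\bar p_{t/2}(w_i,z)$-type decay (as in \eqref{bar1} and Lemma~\ref{fixy}) precisely on that window, which should upgrade $\log(1+n^2t)(1+n^2t)^{-1/2}$ to $(1+n^2t)^{-3/2}=(1+n^2t)^{-(d+1)/2}$. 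Apart from this two-dimensional bookkeeping, the proof reduces to the two-line splitting argument above.
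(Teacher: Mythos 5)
Your Chapman--Kolmogorov splitting at time $t/2$, combined with Lemma~\ref{fixy} for the pinned half and Lemma~\ref{difp} for the gradient half, is a valid and rather clean argument, and it does settle the case $d\geq 3$ by a route genuinely different from the paper's. (The reduction to $n^2t\gtrsim 1$ is also fine, since the left-hand side is trivially bounded by $2$.)

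However, for $d=2$ your proof has a genuine gap, and $d=2$ is precisely the case for which the lemma is needed: it is invoked only in the two-dimensional part of the proof of Lemma~\ref{twotimes}, where the exact exponent $\frac{d+1}{2}=\frac32$ \emph{without} a logarithm is what produces the second term of \eqref{mfBt}. Your splitting inherits the factor $\log(1+n^2t)$ from the two-dimensional case of Lemma~\ref{difp}, and your proposed remedy --- rerunning Proposition~3.6 and Lemma~3.8 of \cite{erhard2025scaling} with the summation restricted to the slice $\{y_i=z\}$ --- is only announced, not carried out. As stated, this is a plan, not a proof, and the heuristic (``pinning enforces extra decay on the short-time window'') is exactly the point that needs verification.

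For the record, the paper avoids the logarithm by never taking the gradient estimate for the full (unpinned) stirring kernel. It first proves the pinned gradient bound for $k$ \emph{independent} walks, where the product structure $p^{rw}_t(\mathbf x,\mathbf y)=\prod_i q_t(x_i,y_i)$ lets one multiply the one-coordinate gradient bound $(1+n^2t)^{-1/2}$ by the pinned factor $q_t(x_i,z)\lesssim(1+n^2t)^{-d/2}$, with no logarithm in any dimension. It then controls $\sum_{\mathbf y\in\Lambda_k,\,y_i=z}|p_t-p^{rw}_t|$ by a Duhamel-type expansion, yielding $\frac{C}{n^2}(1+n^2t)^{-d/2}$, which is absorbed into $(1+n^2t)^{-(d+1)/2}$ using $t\leq T$ (this is where the restriction $t\leq T$ enters, in contrast to Lemma~\ref{difp}). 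If you want to complete your argument, the cleanest fix is to adopt this comparison-with-independent-walks step for $d=2$ rather than trying to sharpen Lemma~\ref{difp} on the slice.
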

\begin{proof}
	Write $p^{rw}_t$ for the transition probability of $k$ independent (accelerated by $n^2$) random walks evolving on $\bb Z^d$ and recall that $q_t$ is the transition probability of the (accelerated) random walk on $\bb Z^d$. Then
	\begin{equation*}
		p_t^{rw}(\mathbf x,\mathbf y)\,=\,\prod_{i=1}^{k} q_t(x_i,y_i).
	\end{equation*}
	For $i\neq 1$, using Proposition 3.6 of \cite{erhard2025scaling} for $k-1$ walks and Lemma \ref{fixy} for one particle, we have 
	\begin{equation}\label{rwbound}
		\sum_{\mathbf y\in (\bb Z^d)^k,y_i=z}\big\lvert p^{rw}_t(\mathbf x,\mathbf y)\,-\,p^{rw}_t(\mathbf x+\mathbf e_{11},\mathbf y) \big\rvert\,\lesssim\, \frac{1}{\sqrt{1+n^2t}}q_t(x_i,z)\,\lesssim\, \Big(\frac{1}{1+n^2t}\Big)^{\frac{d+1}{2}}.
	\end{equation}
	For $i=1$, we can use equation (3.4) of \cite{erhard2025scaling} to get the same bound as in \eqref{rwbound}.
	
	We now compare $p_t$ and $p_t^{rw}$. We claim that, there exists a constant $C=C(k)$ independent of $z$ and $i$ such that for any $\mathbf x\in\Lambda_k$,
	\begin{equation}\label{rwcomp}
		\sum_{\mathbf y\in\Lambda_{k},y_i=z}|p_t^{rw}(\mathbf x,\mathbf y)-p_t(\mathbf x,\mathbf y)|\,\leq\, \frac{C}{n^2}\Big(\frac{1}{1+n^2 t} \Big)^{\frac{d}{2}},\quad \text{if}\;\;d\geq 2.
	\end{equation}
	Indeed, as a simple consequence of \eqref{rwbound} and the triangle inequality, there exists a universal constant $C$ independent of $\mathbf w$ such that for every $\mathbf w$ with $|w_j-w_\ell|=1$ for some $j<\ell$,
	\begin{equation*}
		\sum_{\mathbf y\in(\bb Z^d)^k,y_i=z}\big\lvert p^{rw}_t (\bar{\mathbf w},\mathbf y)\,-\,p^{rw}_t (\mathbf w+\mathbf e_{11},\mathbf y) \big\rvert\,\lesssim\, \Big(\frac{1}{1+n^2t}\Big)^{\frac{d+1}{2}}, 
	\end{equation*}
	for every $\bar{\mathbf w}\in\{\delta^{j,\ell}\mathbf w, \delta^{\ell,j}\mathbf w, \sigma^{j,\ell}\mathbf w\}$, where $\delta^{i,j}\mathbf w\in(\bb Z^d)^k$ is defined by 
	\begin{equation*}
		(\delta^{i,j}\mathbf w)_\ell\,=\,\mathbf w_\ell, \,\forall\;\ell\neq j,\quad \text{and}\quad (\delta^{i,j}\mathbf w)_j\,=\,\mathbf w_i,
	\end{equation*}
	and $\sigma^{i,j}\mathbf w\in(\bb Z^d)^k$ is defined by 
	\begin{equation*}
		(\sigma^{i,j}\mathbf w)_\ell\,=\,\mathbf w_\ell, \,\forall\;\ell\neq i,j,\quad (\sigma^{i,j}\mathbf w)_i\,=\,\mathbf w_j,\quad \text{and}\quad (\sigma^{i,j}\mathbf w)_j\,=\,\mathbf w_i.
	\end{equation*}
	This bound together with  lemma 3.7 and equation (3.10) of \cite{erhard2025scaling} yields that, the sum to be estimated in the claim is less than or equal to 
	\begin{equation*}
		C\int_0^t \Big(\frac{1}{1+n^2(t-s)}\Big)^{\frac{d}{2}}\Big(\frac{1}{1+n^2s}\Big)^{\frac{d+1}{2}}ds,
	\end{equation*}
	which gives the desired bound in the claim after an elementary computation. 
	
	The lemma follows immediately from \eqref{rwcomp} and \eqref{rwbound}. 
\end{proof}

	\subsection{Correlation estimates at three times}
	\begin{lemma}[three times]\label{lem three times}
		Fix $0< s_1<s<t\leq T$ and $y\in\bb Z^d$. Assume  $x_1,x_2$ are distinct. Then 
			\begin{equation*}
			\begin{split}
				&\phi(s_1,s,t;y,y,x_1)\\
				\lesssim&
				\begin{cases}
					\frac{\log n\log\log n}{n^3}\,+\, \frac{1}{n(1+n^2(s-s_1))}\,+\,\frac{1}{n^2(t-s)+1}\Big\{\frac{\log n}{n^2}\,+\,\frac{1}{1+n^2(s-s_1)}\Big\} \quad &\text{if}\;\; d=2\\
						\frac{1}{n^3}\,+\,\frac{1}{n}\Big(\frac{1}{1+n^2(s-s_1)}\Big)^{\frac{d}{2}}\,+\,\Big(\frac{1}{n^2(t-s)+1}\Big)^{\frac{d}{2}} \Big\{\frac{1}{n^2}\,+\,\Big(\frac{1}{1+n^2(s-s_1)}\Big)^{\frac{d}{2}}\Big\}  \quad &\text{if}\;\; d\geq 3.
				\end{cases}
			\end{split}
		\end{equation*}
		and
		\begin{equation*}
			\begin{split}
				&\phi(s_1,s,t,t;y,y,x_1,x_2)\\
				\lesssim\,&
				\begin{cases}
					 \frac{\log^2 n}{n^4}\,+\,\frac{\log n}{n^2(1+n^2(s-s_1))}\,+\,\frac{1}{n^2(t-s)+1}\Big(\frac{\log n}{n^2}\,+\,\frac{1}{1+n^2(s-s_1)}\Big) \quad &\text{if}\;\; d=2\\
					 \frac{1}{n^4}\,+\,\frac{1}{n^2}\Big(\frac{1}{1+n^2(s-s_1)}\Big)^{\frac{d}{2}} \,+\,\Big(\frac{1}{1+n^2(t-s)}\Big)^{d/2}\Big(	\frac{1}{n^2}\,+\,\Big(\frac{1}{1+n^2(s-s_1)}\Big)^{\frac{d}{2}} \Big) \quad &\text{if}\;\; d\geq 3.
				\end{cases}
			\end{split}
		\end{equation*}
	\end{lemma}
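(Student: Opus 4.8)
The plan is to follow the same pattern as in the proofs of Lemma~\ref{twotimes} and Lemma~\ref{twotimestwopt}: Duhamel in the largest time variable, reduction of the resulting ``boundary'' term to the already-established two-time estimates, and a short recursion in the number of particles located at the largest time. Concretely, I would set, for $k\geq 0$,
\begin{equation*}
	P_t^k\;=\;\sup_{x_i\text{'s distinct}}\big|\phi(s_1,s,t,\cdots,t;\,y,y,x_1,\cdots,x_k)\big|,
\end{equation*}
so that $P_t^0=\phi(s_1,s;y,y)$, which is already bounded by the first display of Lemma~\ref{twotimes} (that lemma only requires the points \emph{at the larger time} to be distinct, so taking $x_1=y$ is permitted). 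Since the correction $\Psi$ in \eqref{defPsi} involves only the mutual interactions of the particles sitting at the top time, it vanishes when $k\leq 1$; in fact, for $k=1$ one has the exact identity $\phi(s_1,s,t;y,y,x_1)=\sum_{z}q_{t-s}(x_1,z)\,\phi(s_1,s,s;y,y,z)$ by conditioning on $\mathcal F_s$ and using the single-particle duality. Thus $P_t^1$ is a pure boundary term, and only $k=2$ carries a genuine integral term.

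For the boundary term in general I would apply \eqref{indeq} with the $k$ top particles run from time $t$ down to $s$, giving $\sum_{\mathbf y\in\Lambda_k}p_{t-s}(\mathbf x,\mathbf y)\,\phi(s_1,s,s,\cdots,s;y,y,y_1,\cdots,y_k)$ plus an integral term, and then split the boundary term according to whether some $y_i$ equals the fixed point $y$ (at most one can, since the stirring evolution keeps the $y_i$'s distinct). On $\{y_i\neq y\ \forall i\}$ the points $y,y_1,\dots,y_k$ are distinct and $\phi(s_1,s,\cdots,s;y,y,y_1,\dots,y_k)$ is a two-time correlation with $k+1$ points at the later time, bounded by the appropriate display of Lemma~\ref{twotimes} with the roles of $s,t$ played by $s_1,s$; summing $p_{t-s}(\mathbf x,\mathbf y)$ against this bound (and using that $p_{t-s}$ is a sub-probability) produces exactly the first two terms in each case of the statement. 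On the event that one $y_i$ equals $y$ I would use the squaring identity \eqref{reduce} to replace $\overline{\eta}_s(y)^2$, observe that the term proportional to a single $\overline{\eta}_{s_1}(y)$ has zero mean, and thereby reduce $\phi(s_1,s,\cdots,s;y,y,y,\dots)$ to a linear combination of two-time correlations with strictly fewer top points; combined with the weight bound $\sum_{\mathbf y:\,y_i=y}p_{t-s}(\mathbf x,\mathbf y)\lesssim(1+n^2(t-s))^{-d/2}$ from Lemma~\ref{fixy} with $|I|=1$, this gives precisely the $(1+n^2(t-s))^{-d/2}$-prefactored terms in the statement.

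The integral term (present only for $k=2$) is, for $d\geq 3$, estimated exactly as in the passage \eqref{indeq}$\to$\eqref{ind1} using the crude difference bound and Lemma~\ref{coupling}, yielding $\int_s^t(1+n^2(t-r))^{-d/2}(nP_r^1+P_r^0)\,dr$; inserting the bounds for $P_r^0$ and $P_r^1$ obtained above and applying the elementary time-integral estimates \eqref{eqd3} and \eqref{eq3d3} gives a contribution dominated by the boundary terms. For $d=2$ this is too lossy, and (as in the $d=2$ part of Lemma~\ref{twotimes}) I would introduce the refined quantity
\begin{equation*}
	\mf{P}_t^{k-1}\;=\;\sup_{\substack{\mathbf x\in\Lambda_k\\1\leq i,j\leq k}}\Big|\one\{|x_j-x_i|=1\}\big[\phi(s_1,s,t,\cdots,t;y,y,\mathbf x\backslash x_j)-\phi(s_1,s,t,\cdots,t;y,y,\mathbf x\backslash x_i)\big]\Big|,
\end{equation*}
bound it by reproducing the coupling argument behind \eqref{2donetime2} and \eqref{mfBt} (the initial difference of transition probabilities handled by Lemmas~\ref{difp} and \ref{difpfixz}, the two-time correlations there by Lemma~\ref{twotimes}, and the integrand after the meeting time by \eqref{meeting}), thereby gaining the extra factor $(1+\log(1+n^2(t-r)))^{-1}$ in the recursion; then, closing the recursion for $k=2$ with the integral estimates \eqref{eqd2}, \eqref{eq1d3} and \eqref{trlogrs} and keeping only dominating terms, I would recover the stated bounds.

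The main obstacle, as always in $d=2$, is the precise bookkeeping of the logarithmic corrections: one must check that the $\log n$ and $\log\log n$ powers entering through the sharp one-time estimate of Lemma~\ref{onetime2d} and the two-time estimates of Lemma~\ref{twotimes} are absorbed by the target, and that the coupling refinement for $\mf{P}_t^k$ is strong enough to keep the integral term from producing an additional logarithm; everything else reduces to the elementary time integrals collected in the previous subsection.
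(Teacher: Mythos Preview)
Your proposal is correct and follows essentially the same route as the paper: the paper defines $D_t^k$ (your $P_t^k$) and $\mathfrak{D}_t^{k-1}$ (your $\mathfrak{P}_t^{k-1}$), derives the recursion \eqref{indthree} for $d\geq 3$ and the pair \eqref{Dt}--\eqref{mfDt} for $d=2$ by Duhamel down to time $s$, splits the boundary term according to whether some $y_i$ hits the fixed point $y$ (using Lemma~\ref{fixy} and \eqref{reduce} to land in the $B_s^{k+1}$, $B_s^k$, $B_s^{k-1}$ bounds of Lemma~\ref{twotimes}), and closes with the same elementary integrals you cite. One cosmetic remark: your ``zero mean'' comment only applies literally when $k=1$; for $k=2$ the constant piece of the squaring identity still multiplies $\overline{\eta}_{s_1}(y)\overline{\eta}_s(y_2)$, which is nonzero but is exactly the $B_s^{k-1}$ term already present in the paper's recursion, so nothing changes.
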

	\begin{proof}
			Let us denote 
		\begin{equation*}
			D_t^k\,=\,\sup_{x_i's\; \text{are distinct}} 	|\phi(s_1, s,t,\cdots,t;y,y,x_1,\cdots,x_k)|.
		\end{equation*}
	Then obviously $D_t^0=B_{s}^1$. A similar argument to that used for \eqref{indtwo} yields the recursion: for $D_t^k$:
		\begin{equation}\label{indthree}
			\begin{split}
				D_t^k\lesssim &\, B_{s}^{k+1}\,+\,\Big(\frac{1}{n^2(t-s)+1}\Big)^{\frac{d}{2}} (B_{s}^k+B_{s}^{k-1})\\ 
				+\,&\one_{\{k\geq 2\}}\int_{s}^t\Big(\frac{1}{1+n^2(t-r)}\Big)^{\frac{d}{2}} (nD_r^{k-1}+D_r^{k-2})dr,
			\end{split}
		\end{equation}
		for all $k\geq 1$. We use this recursion relation to estimate $D_t^k$ for $d\geq 3$.

		For $k=1$, $D_t^1$ can be bounded by 
		\begin{equation*}
			\frac{C}{n^3}\,+\,\frac{C}{n}\Big(\frac{1}{1+n^2(s-s_1)}\Big)^{\frac{d}{2}}\,+\,C\Big(\frac{1}{n^2(t-s)+1}\Big)^{\frac{d}{2}} \Big\{\frac{1}{n^2}\,+\,\Big(\frac{1}{1+n^2(s-s_1)}\Big)^{\frac{d}{2}}\Big\}.
		\end{equation*}

		For $k=2$, the sum of the first two terms at the right hand side of \eqref{indthree} is bounded by
		\begin{equation*}
			 \frac{C}{n^4}\,+\,\frac{C}{n^2}\Big(\frac{1}{1+n^2(s-s_1)}\Big)^{\frac{d}{2}} \,+\,C\Big(\frac{1}{1+n^2(t-s)}\Big)^{d/2}\Big(	\frac{1}{n^2}\,+\,\Big(\frac{1}{1+n^2(s-s_1)}\Big)^{\frac{d}{2}} \Big).
		\end{equation*}
		The integral term becomes 
		\begin{equation*}
			\int_s^t \Big(\frac{1}{1+n^2(t-r)}\Big)^{\frac{d}{2}} (nD_r^1+B_s^1)dr.
		\end{equation*}
Using the bound for $D_r^1$ and $B_s^1$, this is bounded by
	 \begin{equation*}
	 	\begin{split}
	 		\int_s^t\Big(\frac{1}{1+n^2(t-r)}\Big)^{\frac{d}{2}}\Big\{&\frac{1}{n^2}\,+\,\Big(\frac{1}{1+n^2(s-s_1)}\Big)^{\frac{d}{2}}\\
	 		+\,&\Big(\frac{1}{n^2(r-s)+1}\Big)^{\frac{d}{2}}\Big\{\frac{1}{n}\,+\,n\Big(\frac{1}{1+n^2(s-s_1)}\Big)^{\frac{d}{2}}\Big\} \\
	 		+\,&\frac{1}{n^2}\,+\,\Big(\frac{1}{1+n^2(s-s_1)}\Big)^{\frac{d}{2}}\Big\}dr.
	 	\end{split}
	 \end{equation*}
By \eqref{eqd3} and \eqref{eq3d3}, this integral is bounded by: 
		\begin{equation*}
				\frac{C}{n^4}\,+\,\frac{C}{n^2}\Big(\frac{1}{1+n^2(s-s_1)}\Big)^{\frac{d}{2}}\,+\,\frac{C}{n^2}\Big(\frac{1}{1+n^2(t-s)}\Big)^{\frac{d}{2}}\Big\{\frac{1}{n}\,+\,n\Big(\frac{1}{1+n^2(s-s_1)}\Big)^{\frac{d}{2}}\Big\}.
		\end{equation*} 
	 Combining the above terms gives the desired bound for $D_t^2$.
	 
	  We now deal with the case $d=2$. Define 
	 \begin{equation*}
	 	\mf D_t^{k-1}\,=\,\sup_{\substack{\mathbf x\in \Lambda_{k}\\1\leq i,j\leq k}}	\Big|\one\{|x_j-x_i|=1\}\big[\phi(s_1,s,t,\cdots, t;y, y,\mathbf x\backslash x_j)-\phi(s_1,s,t,\cdots,t;y,y,\mathbf x\backslash x_i)\big]\Big|
	 \end{equation*}
	 for every $k\geq 1$. Similar to \eqref{Bt} and \eqref{mfBt}, we obtain the recursions:
	 \begin{equation}\label{Dt}
	 	\begin{split}
	 		D_t^k\,\lesssim\, &B_s^{k+1}\,+\,\frac{1}{n^2(t-s)+1} (B_s^k+B_s^{k-1})\\
	 		+\,&\one_{\{k\geq 2\}}\int_s^t \frac{1}{1+n^2(t-r)} (n\mf D_r^{k-1}+D_r^{k-2})dr.
	 	\end{split}
	 \end{equation}
	 and
	  \begin{equation}\label{mfDt}
	 	\begin{split}
	 		\mf D_t^k\,\lesssim\,&   \frac{\log(1+n^2(t-s))}{\sqrt{1+n^2(t-s)}}	B_s^{k+1}\,+\, \Big(\frac{1}{1+n^2(t-s)}\Big)^{\frac{3}{2}}\big[B_s^k+B_s^{k-1}\big]\\
	 		+\,& \one_{\{k\geq 2\}}\int_s^t\frac{1}{1+n^2(t-r)} \frac{1}{1+\log(1+n^2(t-r))} (n\mf D_r^{k-1}+D_r^{k-2})dr.
	 	\end{split}
	 \end{equation}

	 	For $k=1$, by \eqref{Dt}, $D_t^1$ can be bounded by a constant multiple of
  \begin{equation*}
 		\frac{\log n\log\log n}{n^3}\,+\,\frac{1}{n(1+n^2(s-s_1))}\,+\,\frac{1}{n^2(t-s)+1}\Big\{\frac{\log n}{n^2}\,+\,\frac{1}{1+n^2(s-s_1)}\Big\}.
 \end{equation*}
	 By \eqref{mfDt}, $\mf D_t^1$ is bounded by a constant multiple of
	 \begin{equation*}
	 	\frac{\log(1+n^2(t-s))}{\sqrt{1+n^2(t-s)}}	\big[\frac{\log n\log\log n}{n^3}+\frac{1}{n(1+n^2(s-s_1))}\big]\,+\, \Big(\frac{1}{1+n^2(t-s)}\Big)^{\frac{3}{2}}\big[\frac{\log n}{n^2}\,+\,\frac{1}{1+n^2(s-s_1)}\big].
	 \end{equation*}

	 For $k=2$, the sum of the first two terms at the right hand side of \eqref{Dt} is bounded by
	 \begin{equation*}
	 	\frac{C\log^2 n}{n^4}\,+\,\frac{C\log n}{n^2(1+n^2(s-s_1))}\,+\,\frac{C}{n^2(t-s)+1}\Big(\frac{\log n}{n^2}\,+\,\frac{1}{1+n^2(s-s_1)}\Big).
	 \end{equation*}
	 The integral term becomes 
	 \begin{equation*}
	 	\int_s^t \frac{1}{1+n^2(t-r)} (n\mf D_r^1+B_s^1)dr.
	 \end{equation*}
	 Using bounds for $\mf D_r^1$ and $B_s^1$, this can be bounded by
	 \begin{equation*}
	 	\begin{split}
	 		\int_s^t&\frac{1}{1+n^2(t-r)}\Big\{\frac{\log(1+n^2(t-s))}{\sqrt{1+n^2(r-s)}}\big[\frac{\log n\log\log n}{n^2}+\frac{1}{1+n^2(s-s_1)}\big]\\
	 		&\quad+\,\Big(\frac{1}{1+n^2(r-s)}\Big)^{\frac{3}{2}}\big[\frac{\log n}{n}\,+\,\frac{n}{1+n^2(s-s_1)}\big]\Big\} \\
	 		&\quad\quad\quad+\,\frac{1}{1+n^2(t-r)}\Big\{\frac{\log n}{n^2}\,+\,\frac{1}{1+n^2(s-s_1)}\Big\}dr
	 	\end{split}
	 \end{equation*}
By \eqref{eq1d3} and \eqref{eqd2}, this integral is bounded by a constant multiple of
	 \begin{equation*}
	 	\begin{split}
	 		&\frac{\log n \log\log n}{n^4}\,+\,\frac{1}{n^2}\frac{1}{1+n^2(s-s_1)}\,+\, \frac{1}{1+n^2(t-s)}\big[\frac{\log n}{n^3}\,+\,\frac{1}{n(1+n^2(s-s_1))}\big]\\
	 		+\,&\frac{1}{n^2} \log(1+n^2 (t-s))\Big\{\frac{\log n}{n^2}\,+\,\frac{1}{1+n^2(s-s_1)}\Big\}.
	 	\end{split}
	 \end{equation*}
	\ We can further bound it by
	 \begin{equation*}
	 	\frac{\log^2n}{n^4}\,+\,\frac{\log n }{n^3(1+n^2(t-s))}\,+\,\frac{1}{n(1+n^2(t-s))(1+n^2(s-s_1))}\,+\,\frac{\log n}{n^2}\frac{1}{1+n^2(s-s_1)}.
	 \end{equation*}
	Obviously the bound from the initial term dominates. We finish the proof.
			\end{proof}
		
	\subsection{Correlation estimates at four times}
	
		\begin{lemma}[four times]\label{lem four times}
			Fix $0< s_1<s_2<s<t\leq T$ and $y\in\bb Z^d$.  Then for $d=2$,
			\begin{equation*}
				\begin{split}
					&\phi(s_1,s_2,s,t;y,y,y,x)\\
					\lesssim\,&\frac{\log^2 n}{n^4}\,+\,\frac{\log n}{n^2(1+n^2(s_2-s_1))}\,+\,\frac{1}{1+n^2(s-s_2)}\Big(\frac{\log n}{n^2}\,+\,\frac{1}{1+n^2(s_2-s_1)}\Big)\\
					+\,&\frac{1}{1+n^2(t-s)}\Big(\frac{\log n}{n^2}\,+\,\frac{1}{1+n^2(s_2-s_1)}\Big);
				\end{split}
			\end{equation*}
			for $d\geq 3$,
			\begin{equation*}
				\begin{split}
					&\frac{1}{n^4}\,+\,\frac{1}{n^2}\Big(\frac{1}{1+n^2(s_2-s_1)}\Big)^{\frac{d}{2}} \,+\,\Big(\frac{1}{1+n^2(s-s_2)}\Big)^{d/2}\Big(\frac{1}{n^2}\,+\,\Big(\frac{1}{1+n^2(s_2-s_1)}\Big)^{\frac{d}{2}} \Big)\\
					+\,&\Big(\frac{1}{1+n^2(t-s)}\Big)^{\frac{d}{2}}\Big(\frac{1}{n^2}\,+\,\Big(\frac{1}{1+n^2(s_2-s_1)}\Big)^{\frac{d}{2}}\Big),
				\end{split}
			\end{equation*}
			valid for all $x\in\bb Z^d$.
		\end{lemma}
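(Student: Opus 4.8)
The plan is to run the hierarchy-peeling argument of Lemmas \ref{twotimes}, \ref{twotimestwopt} and \ref{lem three times} one level deeper, feeding in the three-time bounds of Lemma \ref{lem three times} as the input. Arguing exactly as in the derivation of \eqref{indthree}, i.e.\ applying Duhamel's principle \eqref{indeq} to the single particle sitting at the largest time $t$ and using that the source term $\Psi$ of \eqref{defPsi} vanishes when only one particle is present, we obtain
\[
\phi(s_1,s_2,s,t;y,y,y,x)\;=\;\sum_{y'\in\bb Z^d} p_{t-s}(x,y')\,\phi(s_1,s_2,s,s;y,y,y,y'),
\]
with no integral term. I would split this sum according to whether $y'=y$. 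For $y'\neq y$ the two top-time points $y,y'$ are distinct, so $|\phi(s_1,s_2,s,s;y,y,y,y')|$ is bounded, uniformly in $y'$, by the estimate of Lemma \ref{lem three times} for $\phi(s_1,s,t,t;y,y,x_1,x_2)$, applied with the time triple $(s_1,s,t)$ there replaced by $(s_1,s_2,s)$ and with $x_1=y$, $x_2=y'$ (legitimate, since that estimate only requires $x_1\neq x_2$); summing against $\sum_{y'}p_{t-s}(x,y')\le 1$ reproduces the first three terms of the claimed bound. For $y'=y$ the identity $\overline{\eta}_s(y)^2=(1-2\rho^n_s(y))\overline{\eta}_s(y)+\chi(\rho^n_s(y))$ gives
\[
\phi(s_1,s_2,s,s;y,y,y,y)\;=\;(1-2\rho^n_s(y))\,\phi(s_1,s_2,s;y,y,y)\,+\,\chi(\rho^n_s(y))\,\phi(s_1,s_2;y,y),
\]
and the attached weight obeys $p_{t-s}(x,y)\lesssim(1+n^2(t-s))^{-d/2}$ by Lemma \ref{fixy} with $|I|=1$.

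It then remains to estimate the three-time all-$y$ correlation $\phi(s_1,s_2,s;y,y,y)$ and the two-time autocorrelation $\phi(s_1,s_2;y,y)$. The latter is covered directly by Lemma \ref{twotimes}. For the former I would peel once more: again only one particle sits at the top time $s$, so $\phi(s_1,s_2,s;y,y,y)=\sum_{y''}p_{s-s_2}(y,y'')\,\phi(s_1,s_2,s_2;y,y,y'')$; the terms with $y''\neq y$ are bounded by the estimate of Lemma \ref{twotimes} for $\phi(s,t,t;y,x_1,x_2)$ (together with $\sum p\le 1$), while the $y''=y$ term reduces, the $\chi$-piece now dropping out because $\E^n_{\nu^n_{\rho_0(\cdot)}}[\overline{\eta}_{s_1}(y)]=0$, to $(1-2\rho^n_{s_2}(y))\,\phi(s_1,s_2;y,y)$ carrying the weight $p_{s-s_2}(y,y)\lesssim(1+n^2(s-s_2))^{-d/2}$. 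Inserting the bounds of Lemma \ref{twotimes} yields
\[
|\phi(s_1,s_2,s;y,y,y)|\;\lesssim\;\sup_{x_1\neq y}\big|\phi(s_1,s_2,s_2;y,y,x_1)\big|\,+\,\big(1+n^2(s-s_2)\big)^{-d/2}\,\big|\phi(s_1,s_2;y,y)\big|.
\]

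Finally I would assemble the three displays, substitute the two-time and three-time bounds from Lemmas \ref{twotimes} and \ref{lem three times}, and check that each resulting product is absorbed by one of the listed terms. No new idea enters beyond what is already present in Lemma \ref{lem three times}; the only delicate part is the bookkeeping of the repeated-point reductions and, in $d=2$, identifying the dominant term among the nested logarithms in each product. For instance, $(1+n^2(t-s))^{-1}\,n^{-3}\log n\log\log n$ is absorbed by $(1+n^2(t-s))^{-1}n^{-2}\log n$ since $n^{-1}\log\log n\to0$, and any product carrying two decaying time factors is absorbed by the corresponding term with a single such factor. The case $d\geq 3$ is identical, with the logarithms replaced by constants and the exponents adjusted accordingly.
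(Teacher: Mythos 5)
Your proposal is correct and follows essentially the same route as the paper: peel the single top-time particle by Duhamel (no source term since $k=1$), split the resulting sum on whether the peeled particle lands on $y$, reduce the repeated point via the identity $\overline\eta(y)^2=(1-2\rho^n(y))\overline\eta(y)+\chi(\rho^n(y))$ with weight $p_{t-s}(x,y)\lesssim(1+n^2(t-s))^{-d/2}$, and feed in the three-time and two-time bounds. The only cosmetic difference is that you re-derive the bound for $\phi(s_1,s_2,s;y,y,y)$ by a further peeling, whereas the paper obtains it directly as the $k=1$ case of Lemma \ref{lem three times} with the shifted time triple $(s_1,s_2,s)$.
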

		\begin{proof}
			Repeating the previous procedure, we can get
			\begin{equation}\label{sumD}
				|\phi(s_1,s_2,s,t;y,y,y,x)|\,\lesssim\,D_{s}^2\,+\,\Big(\frac{1}{n^2(t-s)+1}\Big)^{\frac{d}{2}} (D_{s}^0+D_s^1).
			\end{equation}
			Note that $D_s^0=B_{s_2}^1$. Moreover $D_s^0+D_s^1$ can be bounded by
			\begin{equation*}
				\frac{\log n}{n^2}\,+\,\frac{1}{1+n^2(s_2-s_1)}
			\end{equation*}
			if $d=2$, and 
			\begin{equation*}
				\frac{1}{n^2}\,+\,\Big(\frac{1}{1+n^2(s_2-s_1)}\Big)^{\frac{d}{2}}
			\end{equation*}
			if $d\geq 3$.
		We conclude the proof by Lemma \ref{lem three times}.
		\end{proof}
		
	\section{Tightness}\label{sec tightness} 
	
	In this section, we prove the tightness of the sequence $\Gamma^n$ in the space $C([0,T],\R)$.  By Kolmogorov–Chentsov criterion, it suffices to prove the following result.
	
	\begin{proposition}\label{prop occupation time L4}
		There exists some constant $C$ such that for any $0 \leq s < t \leq T$,
		\[\E^n_{\nu^n_{\rho_0 (\cdot)}} \Big[\Big(\Gamma^n (t) - \Gamma^n (s)\Big)^4\Big] \leq C (t-s)^2.\]
	\end{proposition}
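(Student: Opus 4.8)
The plan is to write the fourth moment as a four-fold time integral of the single-point, four-time correlation function and then to estimate that function by Lemma~\ref{lem four times}. Since $\Gamma^n(t)-\Gamma^n(s)=\beta_{d,n}\int_s^t\bar{\eta}_r(0)\,dr$, expanding the fourth power and applying Fubini gives
\[
\E^n_{\nu^n_{\rho_0(\cdot)}}\Big[\big(\Gamma^n(t)-\Gamma^n(s)\big)^4\Big]
=\beta_{d,n}^4\int_{[s,t]^4}\phi(r_1,r_2,r_3,r_4;0,0,0,0)\,dr_1\,dr_2\,dr_3\,dr_4 .
\]
The correlation function is invariant under simultaneous permutations of its time and space arguments, and here all space arguments equal the origin, so the integrand is a symmetric function of $(r_1,r_2,r_3,r_4)$; hence the integral equals $4!$ times its restriction to the ordered simplex $\{s\le r_1\le r_2\le r_3\le r_4\le t\}$. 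On that simplex, away from the null set where two times coincide, Lemma~\ref{lem four times} applied with $y=0$ and $x=0$ bounds $|\phi(r_1,r_2,r_3,r_4;0,0,0,0)|$ by the expression on its right-hand side, with $(s_1,s_2,s,t)$ there replaced by $(r_1,r_2,r_3,r_4)$.

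What remains is to integrate that bound over the simplex and multiply by $\beta_{d,n}^4$, which I would do summand by summand using only the elementary estimates
\[
\int_0^a\frac{du}{(1+n^2u)^{d/2}}\lesssim\frac1{n^2}\quad(d\ge3),\qquad
\int_0^a\frac{du}{1+n^2u}\le\frac{\log(1+n^2T)}{n^2}\lesssim\frac{\log n}{n^2}\quad(d=2),
\]
valid for $0\le a\le T$, together with $\int_s^t dr\le t-s$ for each time variable that does not occur in a given summand, and the trivial remark that $(t-s)^k\le T^{k-2}(t-s)^2$ for $k\ge2$ since $t-s\le T$. For $d\ge3$, where $\beta_{d,n}^4=n^4$, every summand of Lemma~\ref{lem four times} is a product of at most two factors drawn from $\{n^{-2},(1+n^2(r_{i+1}-r_i))^{-d/2}\}$; integrating out the variable paired with each factor $(1+n^2(r_{i+1}-r_i))^{-d/2}$ yields $n^{-2}$, the remaining integrations yield powers of $(t-s)$, and the prefactor $n^4$ is exactly consumed, leaving $\lesssim(t-s)^2$. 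For $d=2$, where $\beta_{2,n}^4=n^4/\log^2n$, the same scheme applies once one notes that a factor $(1+n^2(r_{i+1}-r_i))^{-1}$ contributes $(\log n)/n^2$ upon integration; the extremal summands are those carrying two such factors, for which $n^4/\log^2n\cdot(\log n/n^2)^2=1$ while the two remaining integrations supply $(t-s)^2$, and all other summands are strictly smaller.

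The genuine difficulty of this proposition lies entirely in the four-time, single-point correlation bound of Lemma~\ref{lem four times}, and in the chain of estimates (Lemmas~\ref{twotimes} and~\ref{lem three times}) on which it rests; granted those inputs, what is left is a routine though somewhat lengthy integration over the time simplex. The one place that demands attention is the $d=2$ bookkeeping: one must confirm that the logarithms generated by the time integrals precisely cancel the $(\log n)^{-2}$ hidden in $\beta_{2,n}^4$, so that no spurious divergent factor of $\log n$ survives and the final bound is a clean $C(t-s)^2$.
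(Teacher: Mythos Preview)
Your proposal is correct and follows essentially the same route as the paper's proof: expand the fourth moment as a four-fold time integral, reduce by symmetry to the ordered simplex, apply Lemma~\ref{lem four times} with $y=x=0$, and integrate each summand using the elementary one-variable bounds \eqref{eqd3} and \eqref{eqd2}. The paper carries out the $d=2$ case explicitly (labelling the resulting integrals $J_1,\ldots,J_4$), but the argument and the bookkeeping of the $(\log n)$ factors against $\beta_{2,n}^4=n^4/\log^2 n$ are exactly as you describe.
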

	\begin{proof}
		Since the proofs for the cases $d\geq 3$ and $d=2$ are similar, we only present the  proof for the latter case.
		Since $T>0$ is fixed, it is sufficient to show that 
		\begin{equation*}
			\begin{split}
				\E^n_{\nu^n_{\rho_0 (\cdot)}}  \Big[\big(\int_s^t \bar\eta_r(0)dr\big)^4\Big]
				\lesssim
				\,&\int_s^t\int_s^{\tau}\int_s^{s_3}\int_s^{s_2}  |\E^n_{\nu^n_{\rho_0 (\cdot)}} [\bar\eta_\tau(0)\bar\eta_{s_1}(0)\bar\eta_{s_2}(0)\bar\eta_{s_3}(0)]| ds_1  ds_2ds_3d\tau\\
				\lesssim\,& \frac{(t-s)^2\log^2n}{n^4}\,+\,\frac{(t-s)^3\log^2n}{n^4}\,+\,\frac{(t-s)^4\log^2 n}{n^4}.
			\end{split}
		\end{equation*}
		By Lemma \ref{lem four times}, it suffices to show that
		\begin{equation}\label{2d1}
			J_1\,=\,\int_s^t\int_s^{\tau}\int_s^{s_3}\int_s^{s_2}  \frac{\log^2 n}{n^4} ds_1  ds_2ds_3d\tau\,\lesssim\,\frac{(t-s)^4\log^2 n}{n^4}
		\end{equation}
		\begin{equation}\label{2d2}
			J_2\,=\,\int_s^t\int_s^{\tau}\int_s^{s_3}\int_s^{s_2}  \frac{1}{n^2}\frac{\log n}{(1+n^2(s_2-s_1))}ds_1  ds_2ds_3d\tau\,\lesssim\,\frac{(t-s)^3\log^2 n}{n^4}
		\end{equation}
		\begin{equation}\label{2d3}
			\begin{split}
				J_3\,=\,&\int_s^t\int_s^{\tau}\int_s^{s_3}\int_s^{s_2}  \frac{1}{1+n^2(s_3-s_2)}\Big(\frac{\log n}{n^2}\,+\,\frac{1}{1+n^2(s_2-s_1)}\Big) ds_1  ds_2ds_3d\tau\\
				\lesssim\,&\frac{(t-s)^2\log^2 n}{n^4}\,+\,\frac{(t-s)^3\log^2n}{n^4}
			\end{split}
		\end{equation}
		\begin{equation}\label{2d4}
			\begin{split}
				J_4\,=\,&\int_s^t\int_s^{\tau}\int_s^{s_3}\int_s^{s_2}  \frac{1}{1+n^2(\tau-s_3)}\Big(\frac{\log n}{n^2}\,+\,\frac{1}{1+n^2(s_2-s_1)}\Big) ds_1  ds_2ds_3d\tau\\
				\lesssim\,&\frac{(t-s)^2\log^2n}{n^4}\,+\,\frac{(t-s)^3\log^2n}{n^4}
			\end{split}
		\end{equation}
		
		\eqref{2d1} is obvious. 
		
		\eqref{2d2} follows from  \eqref{eqd2}.
		
		By \eqref{eqd2}, we have
		\begin{equation*}
			\begin{split}
				&\int_s^t\int_s^{\tau}\int_s^{s_3}\int_s^{s_2}  \frac{1}{1+n^2(s_3-s_2)}\frac{\log n}{n^2} ds_1  ds_2ds_3d\tau\\
				\lesssim\,&\frac{\log n}{n^2}(t-s)\int_s^t\int_s^{\tau}\int_s^{s_3} \frac{1}{1+n^2(s_3-s_2)}  ds_2ds_3d\tau\\
				\lesssim\,&\frac{\log^2 n}{n^4}(t-s)\int_s^t\int_s^{\tau}ds_3d\tau\,\lesssim\,\frac{\log^2 n}{n^4}(t-s)^3
			\end{split}
		\end{equation*}
		and
		\begin{equation*}
			\begin{split}
				&\int_s^t\int_s^{\tau}\int_s^{s_3}\int_s^{s_2}  \frac{1}{1+n^2(s_3-s_2)}\frac{1}{1+n^2(s_2-s_1)}ds_1  ds_2ds_3d\tau\\
				\lesssim\,&\frac{\log n}{n^2}\int_s^t\int_s^{\tau}\int_s^{s_3} \frac{1}{1+n^2(s_3-s_2)}  ds_2ds_3d\tau\\
				\lesssim\,&\frac{\log^2 n}{n^4}\int_s^t\int_s^{\tau}ds_3d\tau\,\lesssim\,\frac{\log^2 n}{n^4}(t-s)^2
			\end{split}
		\end{equation*}
		This proves \eqref{2d3}. 
		
		To see \eqref{2d4}, using \eqref{eqd2} again, we have
		\begin{equation*}
			\begin{split}
				&\int_s^t\int_s^{\tau}\int_s^{s_3}\int_s^{s_2}  \frac{1}{1+n^2(\tau-s_3)} ds_1  ds_2ds_3d\tau\\
	\lesssim\,&(t-s)^2\int_s^t\int_s^{\tau} \frac{1}{1+n^2(\tau-s_3)} ds_3d\tau\,\lesssim\,\frac{\log n}{n^2}(t-s)^3
			\end{split}
		\end{equation*}
		and 
		\begin{equation*}
			\begin{split}
				&\int_s^t\int_s^{\tau}\int_s^{s_3}\int_s^{s_2}  \frac{1}{1+n^2(\tau-s_3)}\frac{1}{1+n^2(s_2-s_1)} ds_1  ds_2ds_3d\tau\\
				\lesssim\,&\frac{\log n}{n^2}\int_s^t\int_s^{\tau}\int_s^{s_3} \frac{1}{1+n^2(\tau-s_3)} ds_2ds_3d\tau\\
				\lesssim\,&\frac{\log n}{n^2} (t-s)\int_s^t\int_s^{\tau} \frac{1}{1+n^2(\tau-s_3)} ds_3d\tau\,\lesssim\,\frac{\log^2 n}{n^4}(t-s)^2.
			\end{split}
		\end{equation*}
		\end{proof}

\appendix
	
	\section{Elementary computations}
	
	In this section, we prove some elementary integral bounds that were frequently used along the proof.
	
	\begin{lemma}\label{intsingle}
		Fix $T>0$. Then there exists a constant $C=C(T,d)$ such for for all $0\leq s<t\leq T$,
		\begin{equation}\label{eqd3}
			\int_s^t \Big(\frac{1}{1+n^2(t-r)}\Big)^{\frac{d}{2}} dr\,\leq\,\frac{C}{n^2},\quad \text{if}\;d\geq 3,
		\end{equation} 
		\begin{equation}\label{eqd2}
			\int_s^t \frac{1}{1+n^2(t-r)}  dr\,=\,\frac{1}{n^2} \log(1+n^2 (t-s)), 
		\end{equation}
		and 
		\begin{equation}\label{eqd1}
			\int_s^t \Big(\frac{1}{1+n^2(t-r)}\Big)^{\frac{1}{2}} dr\,=\,\frac{2}{n^2}\big(\sqrt{1+n^2(t-s)}-1\big). 
		\end{equation} 
	\end{lemma}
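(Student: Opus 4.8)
The plan is to reduce all three integrals to a single power integral by the change of variables $u = 1+n^2(t-r)$. With this substitution one has $du = -n^2\,dr$, and as $r$ runs over $[s,t]$ the variable $u$ runs over $[1,\,1+n^2(t-s)]$, so that for every exponent $\alpha>0$
\[
\int_s^t \Big(\frac{1}{1+n^2(t-r)}\Big)^{\alpha} dr \;=\; \frac{1}{n^2}\int_1^{1+n^2(t-s)} u^{-\alpha}\,du .
\]
Everything then follows by evaluating this elementary integral.

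For \eqref{eqd2} I would take $\alpha=1$, which gives $\frac{1}{n^2}\big[\log u\big]_1^{1+n^2(t-s)} = \frac{1}{n^2}\log(1+n^2(t-s))$, exactly as claimed. For \eqref{eqd1} I would take $\alpha=\tfrac12$, obtaining $\frac{1}{n^2}\big[2\sqrt{u}\,\big]_1^{1+n^2(t-s)} = \frac{2}{n^2}\big(\sqrt{1+n^2(t-s)}-1\big)$. Both of these are exact identities, so no estimation is needed.

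For \eqref{eqd3} I would take $\alpha = d/2 \geq 3/2$ and simply enlarge the domain of integration to $[1,\infty)$:
\[
\frac{1}{n^2}\int_1^{1+n^2(t-s)} u^{-d/2}\,du \;\leq\; \frac{1}{n^2}\int_1^{\infty} u^{-d/2}\,du \;=\; \frac{1}{n^2}\cdot\frac{1}{d/2-1} \;=\; \frac{2}{n^2(d-2)},
\]
so the assertion holds with $C = 2/(d-2)$ (and in particular the stated dependence on $T$ is vacuous for all three bounds). There is no genuine obstacle here; the only point worth flagging is that this constant degenerates as $d\downarrow 2$, which is precisely why the borderline case $d=2$ must be recorded separately as the exact logarithmic identity \eqref{eqd2} rather than as an $O(n^{-2})$ bound.
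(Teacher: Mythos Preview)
Your proof is correct and follows essentially the same approach as the paper: the paper performs the substitution $u=t-r$ followed by $v=n^2u$, which is equivalent to your single substitution $u=1+n^2(t-r)$, and then simply states that ``the results follow from simple calculations.'' If anything, you have supplied more detail than the paper does, including the explicit constant $C=2/(d-2)$ in the case $d\geq 3$.
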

	\begin{proof}
	Let
	\begin{equation*}
		I=\int_s^t\Big(\frac{1}{1+n^2(t-r)}\Big)^{d/2}dr.
	\end{equation*}
	Change variables $u=t-r$ and then $v=n^2u$. This gives
	\begin{equation*}
		I=\int_0^{t-s}(1+n^2u)^{-d/2}du=\frac{1}{n^2}\int_0^{n^2(t-s)}(1+v)^{-d/2}dv.
	\end{equation*}
The results follow from simple calculations.
	\end{proof}
	
	\begin{lemma}
	 \begin{equation}\label{eq1d3}
		\int_s^t \frac{1}{1+n^2(t-r)}\Big(\frac{1}{1+n^2(r-s)}\Big)^{\frac{d}{2}}dr\,\lesssim
		\begin{cases}
				\frac{1}{n^2(1+\log(1+n^2(t-s)))} &\quad \text{if}\;d=1,\\
			\frac{\log(1+n^2(t-s))}{n^2(1+n^2(t-s))} &\quad \text{if}\;d=2,\\
			\frac{1}{n^2(1+n^2(t-s))} &\quad \text{if}\;d\geq 3.\\
		\end{cases}
	\end{equation}
	\end{lemma}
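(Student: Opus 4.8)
The plan is to reduce \eqref{eq1d3} to one canonical integral and then to split the interval of integration at its midpoint. First I would substitute $w=r-s$ and then $v=n^{2}w$; writing $a:=n^{2}(t-s)$, the left-hand side of \eqref{eq1d3} becomes
\[
\frac{1}{n^{2}}\int_{0}^{a}\frac{dv}{(1+a-v)\,(1+v)^{d/2}}\,.
\]
Thus it suffices to prove $I(a):=\int_{0}^{a}(1+a-v)^{-1}(1+v)^{-d/2}\,dv\,\lesssim\,\psi_{d}(a)$ for every $a\ge 0$, where $\psi_{d}(a)$ is obtained from the right-hand side of \eqref{eq1d3} by deleting the factor $n^{-2}$ and replacing $n^{2}(t-s)$ by $a$ (the restriction $0<t-s\le T$ only constrains $a$ to lie in $(0,n^{2}T]$, so a bound for all $a\ge 0$ is enough).

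Next I would write $I(a)=I_{1}(a)+I_{2}(a)$, with $I_{1}$ the integral over $[0,a/2]$ and $I_{2}$ that over $[a/2,a]$. On $[0,a/2]$ one has $1+a-v\ge 1+a/2\gtrsim 1+a$, hence $I_{1}(a)\lesssim(1+a)^{-1}\int_{0}^{a/2}(1+v)^{-d/2}\,dv$, and the last integral equals $2(\sqrt{1+a/2}-1)$ for $d=1$, equals $\log(1+a/2)$ for $d=2$, and is bounded by $2/(d-2)$ for $d\ge 3$ (cf.\ the elementary computations in Lemma \ref{intsingle}). On $[a/2,a]$ one has $1+v\ge 1+a/2\gtrsim 1+a$, hence $(1+v)^{-d/2}\lesssim(1+a)^{-d/2}$, and substituting $u=a-v$,
\[
I_{2}(a)\lesssim(1+a)^{-d/2}\int_{0}^{a/2}\frac{du}{1+u}=(1+a)^{-d/2}\log(1+a/2)\lesssim(1+a)^{-d/2}\log(1+a)\,.
\]

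Finally I would combine the two estimates. For $d\ge 3$ the bound $\log(1+a)\lesssim(1+a)^{(d-2)/2}$ lets me absorb $I_{2}(a)$ into $(1+a)^{-1}$, so $I(a)\lesssim(1+a)^{-1}=\psi_{d}(a)$. For $d=2$ both $I_{1}(a)$ and $I_{2}(a)$ are $\lesssim(1+a)^{-1}\log(1+a)=\psi_{2}(a)$. For $d=1$ the split only gives $I(a)\lesssim(1+a)^{-1/2}\bigl(1+\log(1+a)\bigr)$, so I would additionally use the elementary inequality $(1+\log(1+a))^{2}\le C\sqrt{1+a}$ for all $a\ge 0$ — which follows by putting $u=\log(1+a)$ and noting that $(1+u)^{2}e^{-u/2}$ is bounded on $[0,\infty)$ — to conclude $I(a)\lesssim\bigl(1+\log(1+a)\bigr)^{-1}=\psi_{1}(a)$.

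The only step that is not purely mechanical is the $d=1$ case: the bound requested there is weaker than the one the midpoint split naturally produces, and recovering the precise logarithmic form requires the convexity-type inequality just mentioned. The cases $d=2$ and $d\ge 3$ follow immediately from the midpoint split together with the elementary integrals already recorded in Lemma \ref{intsingle}, so I expect no real difficulty there.
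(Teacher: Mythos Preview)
Your proposal is correct and follows essentially the same route as the paper: both split the integral at the midpoint, bound the slowly-varying factor by its value at the midpoint on each half, and then invoke the elementary integrals of Lemma~\ref{intsingle}. Your preliminary change of variables to the single parameter $a=n^{2}(t-s)$ is a cosmetic simplification, and your explicit justification of the $d=1$ inequality $(1+\log(1+a))^{2}\lesssim\sqrt{1+a}$ makes precise what the paper leaves implicit when it asserts that $I_{2}$ is dominated by the bound already obtained for $I_{1}$.
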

	\begin{proof}
		The integral to be estimated can be written as the sum $I_1+I_2$, where 
		\begin{equation*}
			I_1 = \int_s^{\frac{s+t}{2}} \frac{1}{1+n^2(t - r)} \left( \frac{1}{1+n^2(r - s)} \right)^{d/2} dr,
		\end{equation*}
		and 
		\begin{equation*}
			I_2 = \int_{\frac{s+t}{2}}^t \frac{1}{1+n^2(t - r)} \left( \frac{1}{1+n^2(r - s)} \right)^{d/2} dr.
		\end{equation*}
			Since
		\begin{equation*}
		I_1\,\lesssim\,\frac{1}{1+n^2(t-s)}\int_s^{\frac{s+t}{2}} \left( \frac{1}{1+n^2(r - s)} \right)^{d/2} dr,
		\end{equation*} 
	by Lemma \ref{intsingle}, for $d=1$,
		\begin{equation*}
			I_1 \lesssim	\frac{1}{1+n^2(t-s)}\frac{1}{n^2}\big(\sqrt{1+n^2(t-s)}-1\big)\lesssim\frac{1}{n^2(1+\log(1+n^2(t-s)))}.
		\end{equation*}
	The bounds for $I_1$ in the cases $d\geq 2$ follow from Lemma \ref{intsingle} directly.

	Similarly, by \eqref{eqd2}
		\begin{equation*}
			\begin{split}
				I_2\,\lesssim\,&\left( \frac{1}{1+n^2(t- s)} \right)^{d/2} \int_{\frac{s+t}{2}}^t \frac{1}{1+n^2(t - r)}  dr\,\\
				\lesssim\,&\left( \frac{1}{1+n^2(t- s)} \right)^{d/2}\frac{\log(1+\frac{1}{2}n^2(t-s))}{n^2},
			\end{split}
		\end{equation*}
		which can be further bounded by the upper bound that we just obtained for $I_1$.
		This conclude the proof.
	\end{proof}
	
	\begin{lemma}
		\begin{equation}\label{trlogrs}
			\begin{split}
			\int_s^t &\frac{1}{1+n^2(t-r)}\frac{1}{1+\log(1+n^2(t-r))}\Big(\frac{1}{1+n^2(r-s)}\Big)^{\frac{d}{2}}dr\\
			\lesssim\,&
			\begin{cases}
				\frac{\log\log n}{n^2} \big( \frac{1}{1+n^2(t- s)} \big)^{d/2}  &\quad \text{if}\;d=1,2,\\
			\frac{1}{n^2} 	\frac{1}{1+n^2(t-s)}\frac{1}{1+\log(1+n^2(t-s))} &\quad \text{if}\;d\geq 3.\\
			\end{cases}
			\end{split}
		\end{equation}
	\end{lemma}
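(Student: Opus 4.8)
The plan is to mimic the proof of \eqref{eq1d3}: split the integral on the left-hand side of \eqref{trlogrs} at the midpoint, writing it as $I_1+I_2$ with $I_1$ the integral over $r\in[s,\tfrac{s+t}{2}]$ and $I_2$ the integral over $r\in[\tfrac{s+t}{2},t]$. On the range defining $I_1$ one has $t-r\geq\tfrac{t-s}{2}$, hence (using $1+\tfrac12 n^2(t-s)\geq\tfrac12(1+n^2(t-s))$ and monotonicity of $\log$) both factors involving $t-r$ are comparable to their values at $r=s$, i.e.\ $\frac{1}{1+n^2(t-r)}\lesssim\frac{1}{1+n^2(t-s)}$ and $\frac{1}{1+\log(1+n^2(t-r))}\lesssim\frac{1}{1+\log(1+n^2(t-s))}$. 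Pulling these out of $I_1$ and bounding $\int_s^{(s+t)/2}\big(\tfrac{1}{1+n^2(r-s)}\big)^{d/2}dr$ by Lemma~\ref{intsingle} --- it is $\lesssim n^{-2}\sqrt{1+n^2(t-s)}$ for $d=1$, $\lesssim n^{-2}\log(1+n^2(t-s))$ for $d=2$, and $\lesssim n^{-2}$ for $d\geq3$ --- one checks term by term that $I_1$ obeys the claimed bound: for $d=1$ the factor $\tfrac{1}{1+\log(1+n^2(t-s))}$ is at most $1\lesssim\log\log n$ while $\tfrac{\sqrt{1+n^2(t-s)}}{1+n^2(t-s)}=(1+n^2(t-s))^{-1/2}$; for $d=2$ the quotient $\tfrac{\log(1+n^2(t-s))}{1+\log(1+n^2(t-s))}\leq1$ absorbs the logarithm coming from the $r$-integral; and for $d\geq3$ the resulting bound is already exactly of the asserted form.

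For $I_2$, on $r\in[\tfrac{s+t}{2},t]$ one has $r-s\geq\tfrac{t-s}{2}$, so $\big(\tfrac{1}{1+n^2(r-s)}\big)^{d/2}\lesssim\big(\tfrac{1}{1+n^2(t-s)}\big)^{d/2}$; pulling this factor out leaves $\int_{(s+t)/2}^{t}\tfrac{dr}{(1+n^2(t-r))(1+\log(1+n^2(t-r)))}$, which by the substitutions $u=t-r$, $v=n^2u$ and then $w=\log(1+v)$ equals $\tfrac{1}{n^2}\log\big(1+\log(1+\tfrac12 n^2(t-s))\big)\leq\tfrac1{n^2}\log(1+\log(1+n^2(t-s)))$. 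When $d=1,2$, since $t-s\leq T$ this last quantity is $\lesssim\log(1+\log(1+n^2T))\lesssim\log\log n$, which yields exactly $I_2\lesssim\tfrac{\log\log n}{n^2}\big(\tfrac{1}{1+n^2(t-s)}\big)^{d/2}$. When $d\geq3$ one instead writes $x=1+n^2(t-s)\geq1$ and uses that the elementary function $x\mapsto x^{1-d/2}(1+\log x)\log(1+\log x)$ is bounded on $[1,\infty)$, so that $x^{-d/2}\log(1+\log x)\lesssim x^{-1}(1+\log x)^{-1}$, giving the claimed bound for $I_2$ in that case as well. Adding the bounds for $I_1$ and $I_2$ finishes the proof.

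No genuine obstacle arises here, as the lemma is purely an exercise in calculus. The only step requiring a moment of care is the $d\geq3$ case of $I_2$: one should \emph{not} simply bound $\log(1+\log(1+n^2(t-s)))$ by $\log\log n$ --- that would be too lossy to recover the claimed bound carrying the factor $(1+\log(1+n^2(t-s)))^{-1}$ --- but instead exploit the strictly negative power $1-\tfrac{d}{2}\leq-\tfrac12$, which dominates the polylogarithmic factors as $x\to\infty$ while the whole expression vanishes at $x=1$.
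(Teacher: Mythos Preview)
Your proof is correct and follows essentially the same approach as the paper: both split the integral at the midpoint $\tfrac{s+t}{2}$, bound the $t-r$ factors on $I_1$ and the $r-s$ factor on $I_2$ by their boundary values, and then invoke Lemma~\ref{intsingle} for $I_1$ and the explicit antiderivative $\log(1+\log(1+v))$ for $I_2$. Your treatment is in fact slightly more detailed than the paper's in the $d\geq3$ case for $I_2$, where the paper simply writes ``combining these two bounds, we conclude the proof'' while you spell out the elementary boundedness of $x\mapsto x^{1-d/2}(1+\log x)\log(1+\log x)$ on $[1,\infty)$.
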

	\begin{proof}
As before, we write the integral as $I_1+I_2$, where 
\begin{equation*}
	I_1 = \int_s^{\frac{s+t}{2}}\frac{1}{1+n^2(t-r)}\frac{1}{1+\log(1+n^2(t-r))}\Big(\frac{1}{1+n^2(r-s)}\Big)^{\frac{d}{2}}dr,
\end{equation*}
and 
\begin{equation*}
	I_2 = \int_{\frac{s+t}{2}}^t \frac{1}{1+n^2(t-r)}\frac{1}{1+\log(1+n^2(t-r))}\Big(\frac{1}{1+n^2(r-s)}\Big)^{\frac{d}{2}}dr.
\end{equation*}
Obviously,
\begin{equation*}
I_1\,\lesssim\, \frac{1}{1+n^2(t-s)}\frac{1}{1+\log(1+n^2(t-s))}\int_s^{\frac{s+t}{2}}\Big(\frac{1}{1+n^2(r-s)}\Big)^{\frac{d}{2}}dr.
\end{equation*}
Then, by Lemma \ref{intsingle},
\begin{itemize}
	\item for $d=1$, 
	\[	I_1\,\lesssim\,	\frac{1}{1+n^2(t-s)}\frac{1}{1+\log(1+n^2(t-s))}\frac{1}{n^2}\big(\sqrt{1+n^2(t-s)}-1\big),\]
	\item for $d=2$,
	\[I_1\,\lesssim\,	\frac{1}{1+n^2(t-s)}\frac{1}{n^2}, \]
	\item for $d\geq 3$,
	\[I_1 \,\lesssim\, 		\frac{1}{1+n^2(t-s)}\frac{1}{1+\log(1+n^2(t-s))}\frac{1}{n^2}.\]
\end{itemize}
By fundamental theorem of calculus,
\begin{equation*}
	\begin{split}
	I_2\,\lesssim\,&\left( \frac{1}{1+n^2(t- s)} \right)^{d/2} \int_{\frac{s+t}{2}}^t \frac{1}{1+n^2(t - r)} \frac{1}{1+\log(1+n^2(t-r))} dr\,\\
	\lesssim\,&\left( \frac{1}{1+n^2(t- s)} \right)^{d/2} \frac{\log(1+\log(1+n^2(t-s)))}{n^2}.
\end{split}
\end{equation*}
Combining these two bounds, we conclude the proof.
	\end{proof}

\begin{lemma}
	For any $d\geq 3$,
	\begin{equation}\label{eq3d3}
	\int_s^t \Big(\frac{1}{1+n^2(t-r)}\Big)^{\frac{d}{2}}\Big(\frac{1}{1+n^2(r-s)}\Big)^{\frac{d}{2}}dr\,\lesssim\,\frac{1}{n^2}\Big(\frac{1}{1+n^2(t-s)}\Big)^{\frac{d}{2}}.
	\end{equation}
\end{lemma}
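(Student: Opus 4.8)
The plan is to reduce this to the single-factor estimate \eqref{eqd3} by splitting the integration interval at its midpoint and, on each half, freezing whichever of the two factors stays bounded there. Concretely, I would write the left-hand side as $I_1+I_2$, where
\begin{equation*}
I_1=\int_s^{\frac{s+t}{2}}\Big(\frac{1}{1+n^2(t-r)}\Big)^{\frac{d}{2}}\Big(\frac{1}{1+n^2(r-s)}\Big)^{\frac{d}{2}}dr,\qquad I_2=\int_{\frac{s+t}{2}}^{t}\Big(\frac{1}{1+n^2(t-r)}\Big)^{\frac{d}{2}}\Big(\frac{1}{1+n^2(r-s)}\Big)^{\frac{d}{2}}dr.
\end{equation*}

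For $I_1$, when $r\in[s,\frac{s+t}{2}]$ one has $t-r\geq\frac{t-s}{2}$, hence $1+n^2(t-r)\geq\frac12(1+n^2(t-s))$, so the first factor is $\lesssim(1+n^2(t-s))^{-d/2}$ uniformly in $r$ over this range. Pulling it out of the integral,
\begin{equation*}
I_1\;\lesssim\;\Big(\frac{1}{1+n^2(t-s)}\Big)^{\frac{d}{2}}\int_s^{\frac{s+t}{2}}\Big(\frac{1}{1+n^2(r-s)}\Big)^{\frac{d}{2}}dr\;\lesssim\;\frac{1}{n^2}\Big(\frac{1}{1+n^2(t-s)}\Big)^{\frac{d}{2}},
\end{equation*}
where the last bound follows from \eqref{eqd3} (valid because $d\geq3$, which is exactly the hypothesis) after the substitution $r\mapsto s+t-r$, or equivalently from the same one-line computation as in the proof of Lemma \ref{intsingle}.

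For $I_2$ I would argue symmetrically: on $[\frac{s+t}{2},t]$ one has $r-s\geq\frac{t-s}{2}$, so now the \emph{second} factor is $\lesssim(1+n^2(t-s))^{-d/2}$, and pulling it out leaves $\int_{\frac{s+t}{2}}^{t}(1+n^2(t-r))^{-d/2}dr\lesssim n^{-2}$ by \eqref{eqd3} directly. Adding the two estimates yields the claim. I do not anticipate any genuine obstacle here; the only things to keep track of are that the constant $\frac12$ lost by restricting to a half-interval is harmless since it is absorbed into $\lesssim$, and that \eqref{eqd3} is only available for $d\geq3$, which is precisely why the statement is restricted to that range.
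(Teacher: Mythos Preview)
Your proposal is correct and matches the paper's proof essentially line for line: the paper also splits at the midpoint, bounds the first factor on $[s,\tfrac{s+t}{2}]$ by $(1+n^2(t-s))^{-d/2}$, applies \eqref{eqd3} to the remaining integral, and then dispatches $I_2$ by the symmetric argument. Your added remarks about the harmless constant $\tfrac12$ and the necessity of $d\geq 3$ are fine and do not depart from the paper's reasoning.
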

\begin{proof}
	We write the integral to be estimated by $I_1+I_2$, where
	\begin{equation*}
		I_1 = \int_s^{\frac{s+t}{2}} \Big(\frac{1}{1+n^2(t - r)}\Big)^{\frac{d}{2}} \Big( \frac{1}{1+n^2(r - s)} \Big)^{d/2} dr,
	\end{equation*}
	and 
	\begin{equation*}
		I_2 = \int_{\frac{s+t}{2}}^t \Big(\frac{1}{1+n^2(t - r)}\Big)^{\frac{d}{2}} \Big( \frac{1}{1+n^2(r - s)} \Big)^{d/2} dr.
	\end{equation*}
	From \eqref{eqd3},
	\begin{equation*}
		I_1\,\lesssim\, \Big(\frac{1}{1+n^2(t-s)}\Big)^{\frac{d}{2}}\int_s^{\frac{s+t}{2}} \left( \frac{1}{1+n^2(r - s)} \right)^{d/2} dr\,\lesssim\,\frac{1}{n^2}\Big(\frac{1}{1+n^2(t-s)}\Big)^{\frac{d}{2}}.
	\end{equation*} 
	A similar computation shows that $I_2$ has the same upper bound.
	This conclude the proof.
\end{proof} 

{\bf \noindent Acknowledgements} Zhao thanks the financial supported by the National Natural Science Foundation of China with grant numbers 12401168 and 12371142, and the Fundamental Research Funds for the Central Universities in China. Xu thanks Funda\c c\~ao para a Ci\^encia e Tecnologia FCT/Portugal for financial support through the project ERC/FCT.

\bibliographystyle{plain}
\bibliography{bibliography}
%
%

\end{document}